\newtheorem{Theorem}{Theorem}[section]
\newtheorem{Lemma}[Theorem]{Lemma}
\newtheorem{Definition}[Theorem]{Definition}
\newtheorem{Corollary}[Theorem]{Corollary}
\newtheorem{Proposition}[Theorem]{Proposition}
\newtheorem{Conjecture}[Theorem]{Conjecture}
\title {On the properties of acyclic sign-skew-symmetric cluster algebras}
\author{Siyang Liu}
\address{Siyang Liu
	\newline School of Mathematics,
	Hangzhou Normal  University, Hangzhou 311121, P.R.China}
\email{siyangliu@hznu.edu.cn}
\newcommand{\lra}{\longrightarrow}
\newcommand{\ra}{\rightarrow}
\newcommand{\sdp}{\times\kern-.2em\vrule height1.1ex depth-.05ex}
\newcommand{\epi}{\lra \kern-.8em\ra}
\newcommand{\C}{{\mathbb C}}
\begin{document}

\maketitle
\bigskip

\tableofcontents

\begin{abstract}
 We  study the tropical dualities and  properties of exchange graphs for the totally  sign-skew-symmetric cluster algebra under a condition.  We prove that  the condition always holds for acyclic cluster algebras, then all results hold for the acyclic case.
\end{abstract}

	\section{Introduction}
Cluster algebras, introduced by	Fomin and Zelevinsky   in  \cite{fomin2002cluster}  for providing a combinatorial framework to study total positivity and canonical bases associated by Lusztig to algebraic groups,  are certain commutative algebras generated by certain combinatorially deﬁned generators (called cluster variables) which are grouped into overlapping clusters. Since cluster algebras were invented, many links between cluster algebras and other subjects have been discovered.
For example, cluster algebras have close relations with Riemann surfaces and triangulations, Zamolodchikov's periodicity properties concerning $Y$-systems,  representation theory,  Donaldson-Thomas invariant theory and scattering diagrams. 

The initial data to construct a cluster algebra is a totally sign-skew-symmetric matrix in the first paper in the series of foundational papers of cluster theory. Since then, most authors 
prefer to study skew-symmetrizable cluster algebras and many important properties and conjectures were proved  in the skew-symmetrizable case. Even though there are still many unkonwns for totally sign-skew-symmetric cluster algebras, we believe  that their properties behave like the skew-symmetrizable case. 

The most important property of cluster algebras is the Laurent phenomenon.  Fomin and Zelevinsky conjectured the cluster variables written as Laurent polynomials of a cluster always have the positive coefficients, which is called the {\em positivity conjecture}. This conjecture was proved by Lee and Schiffler for the symmetric case \cite{LS}, by Gross, Hacking, Keel and Kontsevich  for the skew-symmetrizable case \cite{gross2018canonical}, Huang and Li for the acyclic sign-skew-symmetric cluster algebras. Recently,   Li and Pan proved it  for all totally sign-skew-symmetric case \cite{LP}.

In the fourth paper of the foundational papers \cite{fomin2007cluster},  Fomin and Zelevinsky introduced a lot of combinatorial  objects including $c$-vectors and $g$-vectors (also $C$-matrices and $G$-matrices) to study cluster algebras. The  well-known {\em sign coherence conjecture} was also proposed in \cite{fomin2007cluster}. 
The dualities of $G$-matrix and $C$-matrix were systematically studied by Nakanishi and Zelevinsky in \cite{NZ}. In their paper, they shall assume the sign coherence conjecture holds for $c$-vectors for all skew-symmetrizable cluster algebras. This was confirmed by  Derksen, Weyman, and Zelevinsky in the skew-symmetric case \cite{DWZ} and by  Gross,  Hacking,  Keel, and Kontsevich in the skew-symmetrizable case \cite{gross2018canonical}. Thus the dualities obtained by Nakanishi and Zelevinsky in \cite{NZ} are true for skew-symmetrizable cluster algebras. Let $\mathcal{A}$ be a skew-symmetrizable cluster algebra of rank $n$.
Roughly speaking, they proved the following results for skew-symmetrizable cluster algebras:
\begin{align}
	\label{005} &\text{Every $G$-matrix is an invertible matrix of some $C$-matrix;}\\
	\label{006}  &\text{The  invertible matrix of every $C$-matrix is also some $C$-matrix;}\\
	\label{007}  &\text{Every $G$-matrix has the row sign-coherent property;}\\
	\label{008}  &\text{The dual mutations of $G$-matrix and $C$-matrix hold.}
\end{align}

These claims were proved by Nakanishi and Zelevinsky \cite{NZ} (see also \cite{N}) by assuming $c$-vectors are sign coherent. Even though the sign coherence  conjecture of $c$-vectors was proved for the acyclic case by Huang and Li  \cite{HL}, and in full generality by Li and Pan \cite{LP},  the proofs  in \cite{NZ} largely depends on the  skew-symmetrizable property. 

Reading studied the property of $G$-fan $\mathcal{F}_{G}$ consisting of all $G$-cones (non-negative spans of the column vectors of $G$-matrices) and their faces in \cite{R}. For the skew-symmetrizable cluster algebras, he also defined the mutation fan, and he proved the following result.
\begin{align}
	\label{009} \text{ $\mathcal{F}_G$ is a fan. Indeed it is a subfan of the mutation fan.}
\end{align}

The bijection of $g$-vectors and cluster variables is a corollary of the positivity conjecture and the above claims as proved in \cite{N}.

\begin{align}
	\label{0099} \text{ Two cluster variables are the same $\Longleftrightarrow$  their corresponding $g$-vectors are the same. }
\end{align}

Fomin and Zelevinsky also made a series of conjectures on the properties of exchange graphs of $\mathcal{A}$ as follows.  
\begin{align}
	\label{010}	&\text{Every seed is uniquely defined by its cluster. }\\
	\label{011}	&\text{Two clusters are adjacent  $\Longleftrightarrow$ they have exactly $n- 1$ common cluster variables.}\\
	\label{012}	&\text{Each seed with principal coefficients is  determined by the corresponding $C$-matrix..}
\end{align}
These conjectures were proved for skew-symmetrizable cluster algebras by Gekhtman, Shapiro, and  Vainshtein \cite{GSV}, Cao and Li \cite{CL},  and Cao, Huang, and Li \cite{CHL}.

In this paper, we show that under an condition (the {\bf Assumption} in Section \ref{sec32}),  the claims (\ref{005}---\ref{012}) hold for totally sign-skew-symmetric cluster algebras. In particular, they hold for acyclic sign-skew-symmetric cluster algebras.

\begin{Theorem}[Proposition \ref{sdual}, \ref{dualmut},  Theorem \ref{Gfan},  \ref{gsubfan},  \ref{aban}, \ref{suiyi} ]
	Let $B$ be  a totally sign-skew-symmetric matrix satisfying the {\bf Assumption}. Consider the corresponding matrix pattern and cluster algebra, then the above claims (\ref{005}---\ref{012}) hold.  In particular, claims (\ref{005}---\ref{012}) hold for acyclic cluster algebras.
\end{Theorem}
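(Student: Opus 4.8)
The statement is a synthesis of the six cited results, so my plan is to organize everything around a single technical engine --- the Nakanishi--Zelevinsky style tropical duality between $C$-matrices and $G$-matrices --- and to derive each of (\ref{005})--(\ref{012}) from it essentially as in the skew-symmetrizable literature, the only genuinely new input being the \textbf{Assumption}, which is meant to play the role that a symmetrizer plays in the classical arguments. First I would record the two ingredients that are now available unconditionally in the totally sign-skew-symmetric setting and use them as black boxes throughout: sign coherence of $c$-vectors (\cite{LP}) and positivity of cluster variables (\cite{LP}, and \cite{HL} in the acyclic case).

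The heart of the matter is Propositions \ref{sdual} and \ref{dualmut}, i.e.\ claims (\ref{005})--(\ref{008}). The plan is to run the induction of \cite{NZ} on the distance of a seed $t$ from the initial seed $t_0$: assuming the tropical duality identity relating $C_t$ and $G_t$ together with row sign-coherence of $G_t$ at $t$, one checks both are preserved under a single mutation $\mu_k$. In the skew-symmetrizable case this step turns the mutation rule for $C$-matrices into the mutation rule for $(G^{-1})^{\mathrm{T}}$ by conjugating with the symmetrizer; the role of the \textbf{Assumption} is precisely to supply the identity among the relevant exchange matrices that lets this passage go through without a symmetrizer. Once the duality is in place, (\ref{005})--(\ref{008}) become linear algebra, and (\ref{0099}) follows by the standard argument: the $g$-vector is the extremal exponent in the Laurent expansion with principal coefficients, so positivity plus sign coherence forces equality of $F$-polynomials, hence of the cluster variables.

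With tropical duality and row sign-coherence of $G$-matrices in hand, Theorems \ref{Gfan} and \ref{gsubfan} (claim (\ref{009})) should follow by transcribing Reading's argument \cite{R}: the $G$-cones and their faces form a fan because two distinct maximal $G$-cones are separated by a hyperplane arising from a single mutation (the tropical $\widehat{y}$-dynamics are linear on each $G$-cone), and each $G$-cone lies inside a maximal cone of the mutation fan, so $\mathcal{F}_G$ is a subfan. Finally, Theorems \ref{aban} and \ref{suiyi} (claims (\ref{010})--(\ref{012})) are obtained by importing the arguments of \cite{CL} and \cite{CHL}, which, once tropical duality, the $g$-vector bijection (\ref{0099}) and positivity are granted, use only formal properties of seeds and separation of variables; I would check line by line that no residual use of skew-symmetrizability is hidden there.

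The last assertion --- that the \textbf{Assumption} holds for every acyclic sign-skew-symmetric $B$ --- is the remaining point, and together with the non-symmetrizable inductive step above it is where I expect the real difficulty to lie. The plan is to exploit acyclicity directly: an acyclic seed cannot be revisited along a mutation sequence in a way that would obstruct the needed matrix identities, and the acyclic case carries extra structural tools (an unfolding to a skew-symmetric pattern, or the explicit Laurent expansions behind \cite{HL}) that allow one to verify the \textbf{Assumption} by reduction to a situation where a symmetrizer is effectively available. Assembling these pieces yields (\ref{005})--(\ref{012}) in the acyclic case.
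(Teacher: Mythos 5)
Your outline reproduces the paper's architecture (an NZ-style induction for the tropical dualities under the \textbf{Assumption}, Reading/Nakanishi transcriptions for the fan statements, and the $g$-vector--cluster-variable correspondence feeding the exchange-graph claims), but the two places where the theorem actually needs new content are exactly the ones you defer, and your sketches there do not close them. First, the inductive step: you say the \textbf{Assumption} "supplies the identity among the relevant exchange matrices" that replaces the symmetrizer, but that is not what it does. The \textbf{Assumption} is the statement that the column signs of the $C$-matrices for $B$ and for $\tilde B=-B^{T}$ coincide at every initial vertex; the paper uses it (i) to cancel the two elementary factors in $(G_{t'})^{T}\tilde C_{t'}$ in Proposition \ref{sdual}, and (ii) inside Proposition \ref{dualmut}, where the induction is not one statement but four interlocking ones $(a)_s$--$(d)_s$ involving the patterns of $B$, $B^{T}$ and $-B^{T}$ simultaneously: the sign-flip case of the dual mutation needs the transfer statements that a column of $C^{t_0}_t$ is $\pm e_k$ iff the corresponding column of $\tilde C^{t_0}_t$ is, and likewise for rows of $G$ versus $\tilde G$, combined with the first duality $G_tB_t=B_{t_0}C_t$ of Proposition \ref{fdual}. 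None of this bookkeeping appears in your plan, and without it the induction does not propagate from $s$ to $s+1$.

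Second, the "in particular, acyclic" clause. Your proposed mechanisms ("an acyclic seed cannot be revisited", "a symmetrizer is effectively available after reduction") are not how the \textbf{Assumption} is verified and would not verify it. The paper's route is folding theory: the Huang--Li unfolding (Theorem \ref{hlunfolding}) of any matrix mutation-equivalent to an acyclic one is extended to the framed quiver (Lemmas \ref{fwz5} and \ref{gff}), with the key technical input Lemma \ref{sccinf} (no configuration $i'\to k\to j'$ through frozen vertices, proved from the constant term $1$ of $F$-polynomials); this yields Corollary \ref{printotal}, i.e.\ total sign-skew-symmetry of the square principal-coefficient extension of $B$ along every mutation sequence, and sign-skew-symmetry of that enlarged matrix forces entrywise sign agreement between the lower-left block (the $C$-matrix of $B$) and the transpose of the upper-right block (the $C$-matrix of $-B^{T}$) at every seed --- which is precisely the \textbf{Assumption}, at arbitrary initial vertex because mutation-equivalence to an acyclic matrix is preserved. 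You mention unfolding only in passing, without the framed-quiver admissibility step where the real work lies. A minor further divergence: for claims (\ref{010})--(\ref{012}) the paper does not import \cite{CL} and \cite{CHL}; it argues directly from Theorem \ref{sss}, Proposition \ref{sdual}, Proposition \ref{fdual} and the universal-coefficient formula of Theorem \ref{sqy}, which sidesteps the audit for hidden skew-symmetrizability that your plan would require.
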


In the next section, we recall basic theory of cluster algebras. In Section \ref{ft}, we  describe  behaviors about cluster algebras under folding theory.  In Section \ref{pg}, we prove the properties of $G$-matrices for totally sign-skew-symmetric cluster algebras that satisfy a certain condition (which we conjecture it is always true). In the final section, we  study the properties of exchange graphs.

\section{Preliminaries}	\label{sec2}
Let $I$ be a countable set. We say an integer matrix $B\in Mat_{I\times I}(\mathbb{Z})$  is {\em locally finite} if for each $i\in I$, there are only finitely many nonzero entries $b_{ij}$ and $b_{ji}$, and  say $B$  is  {\em sign-skew-symmetric} if $b_{ij}b_{ji} \leq 0$ and $b_{ij}b_{ji} = 0 \Leftrightarrow b_{ij}=b_{ji}=0$ for all $i, j \in I$. A {\em skew-symmetrizable} matrix $B$ admits a diagonal matrix $D=\text{diag}(d_i, i\in I)$ with positive integer diagonal entries such that $d_ib_{ij} = -d_jb_{ji}$, which  is clearly sign-skew-symmetric. A {\em quiver} is a directed graph without loops or $2$-cycles,  and since a locally finite skew-symmetric matrix naturally corresponds to a locally finite quiver, we do not distinguish them in the following sections.

A semifield $\mathbb{P}$ is a set equipped  with a structure  of  abelian multiplicative group and a structure of abelian  additive semigroup (the addition is usually denoted by $\oplus$) such that the addition $\oplus$  is  distributive with respect to the multiplication in $\mathbb{P}$.  The {\em  tropical semifield} $\mathbb{P}=\mathrm{{trop}}(z_j, j \in J)$ with an index set $J$ is a multiplicative group generated freely by
the elements $z_j, j\in J$,  whose addition $\oplus$ is given by \[\prod_{j\in J}z_j^{a_j}\oplus  \prod_{j\in J}z_j^{b_j}=\prod_{j\in J}z_j^{\mathrm{min}(a_j,b_j)}.\]
Let $\mathbb{QP}$ be the quotient field of the integral domain $\mathbb{ZP}$. 

\begin{Definition}

A  (labeled) seed  is a triplet $\Sigma = (\mathbf{x},\mathbf{y}, B)$, where
\begin{enumerate}
\item[-] $\mathbf{x} =  (x_i, i\in I)$  is  a countable set of indeterminates over $\mathbb{QP}$ such that $\mathcal{F}=\mathbb{QP}(x_i, i\in I)$ is a purely transcendental ﬁeld extension of $\mathbb{QP}$, and $\mathbf{x}$ is called a cluster,  $x_i, i\in I$ are called cluster variables;
\item[-] $\mathbf{y} = (y_i, i\in I)$ is a subset of the semifield $\mathbb{P}$ indexed by $I$;
\item[-] $B\in Mat_{I\times I}(\mathbb{Z})$  is a locally finite sign-skew-symmetric matrix, and it is called the exchange matrix. 
\end{enumerate}

\end{Definition}

 For $k\in I$, define another
triplet $(\mathbf{x'},\mathbf{y'},B') = \mu_k(\mathbf{x},\mathbf{y},B)$ which is called the {\em mutation} of $(\mathbf{x},\mathbf{y},B)$
at $k$ and obtained by the following rules:
\begin{enumerate}
	\item[-] $\mathbf{x'} = (x'_i, i\in I)$ is given by \[x'_i =\begin{cases} x_i, &\text{$i\neq k$},\\  
		 \frac{y_k\prod x_i^{[b_{ik}]_+} + \prod x_i^{[-b_{ik}]_+}}{(y_k \oplus 1)x_k} ,&\text{$i=k$.} \end{cases}\]
	\item[-] $\mathbf{y'} = (y'_i, i\in I)$ is given by \[y'_i =\begin{cases}  y_k^{-1}, &\text{$i=k$,} \\ y_iy_k^{[b_{ki}]_+}(y_k \oplus 1)^{-b_{ki}}, &\text{$i\neq k$.} \end{cases}\]
	\item[-] $B'=(b'_{ij})$ is given by\[b'_{ij}=\begin{cases} -b_{ij},&\text{if $i=k$ or $j=k$,}\\b_{ij} + \mathrm{sign}(b_{ik})[b_{ik}b_{kj}]_+,&\text{otherwise.}\end{cases}\]
\end{enumerate}
where $[a]_+=\mathrm{max}\{a,0\}$. 

Say two labeled seeds $\Sigma$ and $\Sigma'$ define the same unlabeled seed if there is a bijection $\sigma$ of $I$ such that 
$x'_i = x_{\sigma(i)}$, $y'_i = y_{\sigma(i)}$, $b'_{ij} = b_{\sigma(i)\sigma(j)}$ for all $i,j\in I$.

If every matrix obtained from a locally finite sign-skew-symmetric matrix by an arbitrary finite sequence of mutations is also sign-skew-symmetric, we call it a {\em totally sign-skew-symmetric} matrix. Clearly, skew-symmetrizable matrices are totally mutable. 
For a sign-skew-symmetric matrix  $B=(b_{ij})_{i,j\in I}$, we may define a directed graph whose vertices are indexed by $I$, and there is an arrow from $i$ to $j$ if $b_{ij}>0$.  We say $B$ is acyclic if the corresponding directed graph has no oriented cycles.
Huang and Li also proved acyclic sign-skew-symmetric matrices are totally mutable \cite{HL}.

Let $\mathbb{T}$ be an $|I|$-regular tree  and its valencies emitting from each common vertex are labeled by $I$.
A  {\em cluster pattern} is an $|I|$-regular tree $\mathbb{T}$ such that for each vertex $t\in \mathbb{T}$, there is a
 seed $\Sigma_t=(\mathbf{x}_t,\mathbf{y}_t,B_t)$ and for each edge labeled by $k\in I$, two  seeds in the endpoints are obtained from each other by seed mutation at $k$.  Note that a cluster pattern is determined by a totally sign-skew-symmetric matrix. The corresponding {\em exchange graph} is  the graph whose vertices are unlabeled seeds and two unlabeled seeds are connected by an edge if they are related by a single mutation.  
 The {\em cluster algebra} $\mathcal{A} = \mathcal{A}(\mathbf x, \mathbf y, B)$ associated to a totally sign-skew-symmetric matrix $B$ is the $\mathbb{ZP}$-subalgebra of $\mathcal{F}$ generated by all cluster variables on $\mathbb{T}$ corresponding to the matrix $B$. The cluster algebra $\mathcal{A}=\mathcal{A}(\mathbf{x}, \mathbf{y}, B)$ is said to have {\em principal
 	coefficients},  if the semifield is the tropical semifield given by $\mathbb{P}=\mathrm{trop}(y_i, i\in I)$.

Fomin and Zelevinsky proved the following Laurent property for cluster algebras, and they also conjectured cluster variables have non-negative Laurent expressions.

\begin{Theorem}[The Laurent phenomenon, \cite{fomin2002cluster,fomin2007cluster}]\label{laurentphen}
Let $\mathbb{T}$ be a cluster pattern.  Then  for  arbitrary two vertices $t, t_0 \in \mathbb{T}$ and any $j\in I$, we have that 
$x_{j,t} \in \mathbb{ZP}[x_{i,t_0}^{\pm 1}, i\in I]$.  Moreover,
\begin{enumerate}
 \item when $\mathbb{P} = trop(z_i, i\in J)$ is a tropical semifield, we have that $x_{j,t} \in \mathbb{Z}[z_{p}, x_{i,t_0}^{\pm 1}, p\in J, i\in I]$.
 \item when $\mathbb{P} = trop(y_{i,t_0}, i\in I)$ is the principal
case, each cluster variable $x_{j,t}$ is homogeneous with respect to a given $\mathbb{Z}^{|I|}$-grading in $\mathbb{Z}[y_{i,t_0}, x_{i,t_0}^{\pm 1}, i\in I]$, which is given by
\[deg(x_{i,t_0})=\mathbf{e}_i, \,\,\,\,deg(y_i)=-\mathbf{b}_i,\]
where $\mathbf{e}_i, i\in I$ are the standard basis vectors in $\mathbb{Z}^{|I|}$, and $\mathbf{b}_{i,t_0}$ is the $i$-th
column vector of $B_{t_0}$.
 \end{enumerate}
\end{Theorem}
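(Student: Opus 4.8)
The plan is to follow the classical induction of Fomin and Zelevinsky on the tree distance $d(t_0,t)$, isolating as the crux a coprimality (``Caterpillar'') argument that prevents spurious denominators. First I would reduce to finite rank. Since $B_{t_0}$ is locally finite and mutation preserves local finiteness, the finite path in $\mathbb{T}$ from $t_0$ to $t$ produces only finitely many matrices $B_{t_0},\dots,B_{t_m}$, each with finitely many nonzero entries in the relevant rows/columns; hence only finitely many indices occur in the finitely many exchange relations along the path. Restricting to a finite, mutation-relevant index set $S\subseteq I$ reproduces the global exchange relations and reduces the whole statement to the case $|I|<\infty$, so I may assume $I$ is finite throughout.

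For the main Laurent assertion I would fix $t_0$ and induct on $d=d(t_0,t)$. The cases $d=0,1$ are immediate: the $x_{i,t_0}$ are the generators, and a single exchange relation $x'_k=\bigl(y_k\prod_i x_i^{[b_{ik}]_+}+\prod_i x_i^{[-b_{ik}]_+}\bigr)/\bigl((y_k\oplus1)x_k\bigr)$ is already a Laurent polynomial in $\mathbf{x}_{t_0}$. For the inductive step, let $t_1$ be the neighbor of $t_0$ on the path to $t$, along the edge labeled $k$. Applying the hypothesis with $t_1$ as base, $x_{j,t}=L$ is a Laurent polynomial in $\mathbf{x}_{t_1}$; since $\mathbf{x}_{t_1}$ agrees with $\mathbf{x}_{t_0}$ except in the $k$-th slot, where $x_{k,t_1}=P/x_{k,t_0}$ with $P\in\mathbb{ZP}[x_{i,t_0}:i\neq k]$ the exchange polynomial not involving $x_{k,t_0}$, substitution gives $x_{j,t}=N/(x_{k,t_0}^{a}P^{b})$ with $N\in\mathbb{ZP}[x_{i,t_0}:i\in I]$, $N$ not divisible by $x_{k,t_0}$, and $a,b\geq0$. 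Negative powers of $x_{k,t_0}$ are harmless, so everything reduces to showing $b=0$.

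This cancellation is the main obstacle, and it is exactly what the Caterpillar Lemma of \cite{fomin2002cluster} is designed to supply. The idea I would use is to produce a companion Laurent expansion of $x_{j,t}$ in a cluster obtained from $t_0$ by an alternating two-index mutation pattern (the caterpillar spine), whose only possible denominators are powers of the $x_{i,t_0}$ themselves and not of $P$. Working in the factorial ring $\mathbb{QP}[x_{i,t_0}:i\in I]$, where $P$ is coprime to $x_{k,t_0}$, matching the two expansions of the single element $x_{j,t}$ forces $P^{b}\mid N$, hence $b=0$ and $x_{j,t}\in\mathbb{ZP}[x_{i,t_0}^{\pm1}:i\in I]$. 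The delicate point I expect here is the bookkeeping of which exchange polynomials are pairwise coprime and the verification that the caterpillar hypotheses (each exchange polynomial free of the mutated variable, plus the compatibility at each knee of the spine) hold for a general totally sign-skew-symmetric $B$; since these hypotheses depend only on the combinatorics of $B$ and the exchange relations, and not on skew-symmetrizability, the Fomin--Zelevinsky argument transfers once this is checked.

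Finally I would obtain the two refinements by tracking coefficients through the same induction. In the tropical case $\mathbb{P}=\mathrm{trop}(z_p)$, writing $y_k=\prod_p z_p^{a_p}$ one has $y_k/(y_k\oplus1)=\prod_p z_p^{[a_p]_+}$ and $1/(y_k\oplus1)=\prod_p z_p^{[-a_p]_+}$, both monomials with \emph{non-negative} $z$-exponents, so every exchange relation keeps its coefficients in $\mathbb{Z}[z_p]$; combined with the Laurent property this yields $x_{j,t}\in\mathbb{Z}[z_p,x_{i,t_0}^{\pm1}]$. In the principal case, I would check that the $\mathbb{Z}^{|I|}$-grading with $\deg(x_{i,t_0})=\mathbf{e}_i$ and $\deg(y_i)=-\mathbf{b}_{i,t_0}$ makes each exchange relation homogeneous: the degrees of $y_k\prod_i x_i^{[b_{ik}]_+}$ and $\prod_i x_i^{[-b_{ik}]_+}$ differ by $-\mathbf{b}_{k,t_0}+\sum_i\bigl([b_{ik}]_+-[-b_{ik}]_+\bigr)\mathbf{e}_i=-\mathbf{b}_{k,t_0}+\sum_i b_{ik}\mathbf{e}_i=0$, so $x'_k$ is homogeneous; propagating this degree identity along the tree (and confirming the grading is respected by each seed mutation) shows every $x_{j,t}$ is homogeneous, as claimed.
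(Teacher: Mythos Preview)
The paper does not supply its own proof of this theorem: it is stated as a background result with a citation to \cite{fomin2002cluster,fomin2007cluster}, and the paper simply uses it. So there is no ``paper's proof'' to compare your proposal against; your sketch is, in fact, an outline of the very argument the paper is citing.

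That said, a brief remark on your sketch itself. The overall architecture (reduce to finite rank via local finiteness, induct on $d(t_0,t)$, isolate the denominator $P^b$ coming from the first exchange, then kill $b$ by a coprimality/caterpillar argument) is correct and is exactly the Fomin--Zelevinsky strategy. Your description of the caterpillar step, however, is somewhat loose: the actual mechanism is not to produce a single ``companion expansion whose denominators are only monomials'', but rather to run a more refined induction along a caterpillar tree, proving at each step that certain adjacent exchange polynomials are pairwise coprime in the relevant UFD, so that the substitution $x_{k,t_1}=P/x_{k,t_0}$ cannot introduce a non-monomial denominator. If you were to write this out, the care is in verifying those coprimality hypotheses for a general totally sign-skew-symmetric $B$; you correctly flag this as the delicate point. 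Your treatment of the two refinements (tropical coefficients stay polynomial in the $z_p$; principal coefficients are homogeneous because the two exchange monomials have equal degree) is accurate.
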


In the following discussion, we shall fix a seed, which we call the {\em initial seed} and is usually  placed at $t_0 \in \mathbb{T}$, to obtain other datum from it.  

Let $\mathcal{A}$ be a cluster algebra with principal coefficients and initial seed $\Sigma = \Sigma_{t_0}$. Then by Theorem \ref{laurentphen}, we have that $x_{j,t} \in  \mathbb{Z}[y_i, x_i^{\pm 1},  i\in I]$.  The polynomial $F_{j,t} :=x_{j,t}|_{x_i=1, i\in I}$ is called the {\em $F$-polynomial} (with the initial seed at $t_0$). Since $y_{j,t}\in \text{trop}(y_i,i\in I)$, we have that $y_{j,t}$ is of the form $y_{j,t} = \prod_{i\in I}y_i^{c_{ij,t}}$ for some integers $c_{ij,t}, i\in I$. The column vector
$c_{j,t} = (c_{ij,t}, i\in I)^T$ is called the {\em $c$-vectors} and the matrix $C^{t_0}_{t} = (c_{j,t})_{j\in I}$ is called the {\em $C$-matrix} at $t$.
 The column vector $g_{j,t}$,  the degree  vector of $x_{j,t}$ with respect to the grading given in Theorem \ref{laurentphen}, is called the {\em $g$-vector},  and
the matrix $G^{t_0}_t=(g_{j,t})_{j\in I}$ is called the {\em $G$-matrix} at $t$. 
Note that the definitions of $F$-polynomials, $C$-matrices and $G$-matrices depend on the choice of the initial seed.
Also in \cite{fomin2007cluster}, Fomin and Zelevinsky obtained the mutation rules for $F$-polynomials, $C$-matrices and $G$-matrices. 

For each edge $t\frac{k}{\quad\quad}t'$ on $\mathbb{T}$, we have that 
\begin{enumerate}
	\item[-] $F_{i,t'}=\begin{cases} F_{i,t},&\text{if $i\neq k$;}\\(F_{k,t})^{-1}(\prod_{j\in I} y_j^{c_{jk,t}}F_{j,t}^{[b_{jk,t}]_+}+\prod_{j\in I}y_j^{-c_{jk, t}}F_{j,t}^{[-b_{jk,t}]_+}) ,&\text{otherwise.}\end{cases}$

	\item[-] $c_{ij, t'}=\begin{cases} -c_{ik, t},&\text{if $j=k$;}\\c_{ij, t} + \mathrm{sign}(c_{ik,t})[c_{ik,t}b_{jk,t}]_+ ,&\text{otherwise.}\end{cases}$

\item[-] $g_{ij, t'}=\begin{cases} g_{ij,t},&\text{if $j\neq k$;}\\-g_{ik,t} + \Sigma_{s\in I}g_{is,t}[-b_{sk,t}]_+- \Sigma_{s\in I}b_{is,t}[-c_{sk,t}]_+,&\text{otherwise.}\end{cases}$
\end{enumerate}

A nonzero  vector is said to be {\em sign-coherent}  if  it has  either all non-negative coordinates or all non-positive coordinates.  Fomin and Zelevinsky conjectured every $c$-vectors is  sign coherent  and  each row vector of  every $G$-matrix is sign-coherent. The sign coherence conjecture were solved for skew-symmetrizable cluster algebras in \cite{gross2018canonical}  and for acyclic sign-skew-symmetric cluster algebras  in \cite{HL,CHL2}.

\section{Folding theory}\label{ft}
Recall that  a quiver is a directed graph (may be infinite) which  has no loops or $2$-cycles. It is well-known that a quiver is naturally associated   to a skew-symmetric matrix.
\begin{Definition}
	
	Let $Q$ be a locally finite quiver with frozen vertices set $F$, and $\Gamma$ be a group acting on the vertex set $Q_0$ of $Q$. The vertices in $Q_0\setminus F$ are called mutable vertices.
Assume that $B=(b_{ij})_{i,j\in Q_0}$ is the matrix corresponding to $Q$,	we say $Q$ is $\Gamma$-admissible, if the following condition are satisfied:
	\begin{enumerate}
		\item[(i)] $i$ is mutable $\iff$ $g(i)$ is mutable,  for all $i\in Q_0$ and $g\in \Gamma$; 
		\item[(ii)]  $b_{ij} = b_{g(i)g(j)}$, for all $i,j\in Q_0$, and $g\in \Gamma$;		
		\item[(iii)] $b_{ig(i)} = 0$, for all mutable $i\in Q_0$, and $g\in \Gamma$;		
		\item[(iv)]  $b_{ij}b_{g(i)j}\geq 0$, for all $i\in Q_0$, $j\in Q_0\setminus F$, and $g\in \Gamma$.				
		\end{enumerate}
	\end{Definition}

\begin{Definition}[Orbit mutation]\label{om}
Assume that $Q$ is $\Gamma$-admissible, and $B=(b_{ij})_{i,j\in Q_0}$ its corresponding matrix. For each mutable $k\in Q_0$, the matrix $B'=(b'_{ij})_{i,j\in Q_0}$ of orbit mutation $Q'=\mu_{[i]}(Q)$ of $Q$ at $[k]$ is given by the following rule:
\[b'_{ij} = \begin{cases} -b_{ij},  &\text{if $i\in[k]$ or $j\in [k]$};\\ b_{ij}+ \sum\limits_{p\in[k]} \frac{|b_{ip}|b_{pj} +b_{ip}|b_{pj}| }{2}, &\text{otherwise.}\end{cases} \]		
\end{Definition}

The following Lemma  follows easily from the definition.
\begin{Lemma}\label{twoacts}
Suppose that there are two actions of $\Gamma$ on the vertices set $Q_0$ making $Q$ $\Gamma$-admissible, and these two actions on the mutable vertices are the same. Then for each mutable vertex $k\in Q_0$, the orbit mutations of $Q$ at $[k]$ with respect to the two actions of $\Gamma$ are the same.	
\end{Lemma}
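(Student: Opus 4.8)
The plan is to notice that the orbit-mutation rule in Definition \ref{om} is written purely in terms of the set $[k]$ and the entries of $B$, so it suffices to check that the orbit $[k]$ of a mutable vertex $k$ is the same set whichever of the two actions we use. Everything else is then immediate.

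First I would set up notation: denote the two actions by $g\cdot i$ and $g*i$ for $g\in\Gamma$ and $i\in Q_0$, and for mutable $k$ put $[k]_{\cdot}=\{g\cdot k:g\in\Gamma\}$ and $[k]_{*}=\{g*k:g\in\Gamma\}$. Since $Q$ is $\Gamma$-admissible for the $\cdot$-action, condition (i) forces $g\cdot k$ to be mutable for every $g$, so $[k]_{\cdot}$ consists entirely of mutable vertices; the same holds for $[k]_{*}$. By hypothesis the two actions agree on mutable vertices, so $g\cdot k=g*k$ for all $g\in\Gamma$ (using that $k$ itself is mutable), and hence $[k]_{\cdot}=[k]_{*}$.

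Next I would feed this common orbit into the formula of Definition \ref{om}. Both orbit mutations are performed on the same quiver $Q$, hence use the same matrix $B=(b_{ij})$; the only other datum appearing in the formula is the subset $[k]$, which we have just shown to be independent of the action. Therefore the two matrices $B'=(b'_{ij})$ obtained coincide entry by entry, i.e.\ the orbit mutations $\mu_{[k]}(Q)$ agree. If one also records the $\Gamma$-action on $\mu_{[k]}(Q)$, it is worth remarking that orbit mutation does not alter the underlying action on $Q_0$, so the two actions on $\mu_{[k]}(Q)$ still coincide on mutable vertices as well.

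I do not expect a genuine obstacle here: the statement is essentially bookkeeping. The only point deserving a word of care is that the sum over $p\in[k]$ in Definition \ref{om} ranges over mutable vertices only --- which is exactly what condition (i) gives once $k$ is mutable --- so none of the frozen-vertex data, where the two actions could conceivably differ, ever enters the computation.
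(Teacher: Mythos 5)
Your proof is correct and is precisely the argument the paper has in mind: the paper only remarks that the lemma "follows easily from the definition," and your observation that the orbit $[k]$ of a mutable vertex consists of mutable vertices (by admissibility condition (i)) and hence coincides for the two actions, after which Definition \ref{om} depends only on the set $[k]$ and the matrix $B$, is exactly that easy argument spelled out.
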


Assume that $Q$ is $\Gamma$-admissible, denote $[i]$  the $\Gamma$-orbit for each $i\in Q_0$, $\bar{I}$ the  set of $\Gamma$-orbits,  and $\mu_{[i]}  := \prod_{j\in [i]} \mu_j$ the orbit mutation. The matrix $B^{\Gamma} = (b_{[i][j]})_{[i],[j]\in \bar{I}}$ is defined by the following rule:
\[b_{[i][j]} = \sum_{i'\in[i]} b_{i'j}.\]

\begin{Definition}
	The locally finite quiver $Q$ is said to be globally foldable with respect to the group $\Gamma$,  if
	 $Q$ is $\Gamma$-admissible, and for any sequence of orbits $[i_1], \dots, [i_k]$, the quiver $\mu_{i_k}\dots\mu_{i_1}(Q)$ is also $\Gamma$-admissible. In this case, we say $(Q, \Gamma)$ is an unfolding of $B^{\Gamma}$.
	\end{Definition}

The following Lemma follows easily from the definition.

\begin{Lemma}[\cite{FWZ}]\label{fwz5}
	Let $Q$ be a locally finite quiver globally foldable with respect to an action of a group $\Gamma$. Let $\bar{Q}$ be a quiver constructed from $Q$ by introducing new frozen vertices together with some arrows connecting them to the mutable vertices in $Q$. Extend the action of $\Gamma$ from $Q$  to $\bar{Q}$ by making $\Gamma$ ﬁx every newly added vertex. Then the quiver $\bar{Q}$ is globally foldable with respect to $\Gamma$.	
\end{Lemma}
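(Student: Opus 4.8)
The plan is to verify directly the two defining requirements of global foldability for $\bar{Q}$: that $\bar{Q}$ is $\Gamma$-admissible, and that this property is preserved under every finite sequence of orbit mutations. The organizing remark is that an orbit mutation is carried out only at a mutable orbit $[k]$, and in the rule of Definition~\ref{om} the summation index $p$ runs over $[k]$, hence over old, mutable vertices only; thus a newly added frozen vertex can occur in the mutation rules solely as a target index $i$ or $j$, never as a summation index $p$. In particular the full subquiver of $\bar{Q}$ supported on the original vertices of $Q$ transforms under orbit mutations exactly as $Q$ itself does, so at each stage that subquiver \emph{is} the corresponding orbit mutation of $Q$.

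First I would check that $\bar{Q}$ satisfies conditions (i)--(iv). Condition (i) is immediate, since the old vertices retain their mutable/frozen status and the new vertices are frozen and $\Gamma$-fixed. For (ii): a pair of old vertices inherits $b_{ij}=b_{g(i)g(j)}$ from $Q$; for a pair involving a new frozen vertex $f$ one has $g(f)=f$, so what is needed is that $f$ is attached in the same way to all vertices of each $\Gamma$-orbit, which is precisely the requirement that the added arrows be $\Gamma$-equivariant --- implicit in the construction, as otherwise $\bar{Q}$ would already fail (ii). Condition (iii) concerns only mutable $i$, all of which are old, and $b_{ig(i)}$ is unchanged, hence $0$; condition (iv) with $j$ mutable (hence old) is inherited from $Q$ when $i$ is old, and reads $b_{fj}^2\ge 0$ when $i=f$ is new.

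Next I would prove stability under orbit mutations by induction on the length of the mutation sequence. Write $\bar{Q}^{(\ell)}$ and $Q^{(\ell)}$ for the quivers obtained after $\ell$ orbit mutations at mutable orbits; by the organizing remark (and an obvious induction) the old part of $\bar{Q}^{(\ell)}$ equals $Q^{(\ell)}$, which is $\Gamma$-admissible because $Q$ is globally foldable. This immediately supplies (i), (iii), (iv) for $j$ mutable, and (ii) for pairs of old vertices. The remaining point is (ii) for a column of a new frozen vertex $f$, i.e. $b'_{if}=b'_{g(i)f}$ after the final mutation at a mutable orbit $[k]$; this I would get by applying $g$ to the mutation formula for that column and reindexing the sum over $[k]$ via $p\mapsto g(p)$ (legitimate since $[k]$ is $\Gamma$-stable), using $b_{g(i)g(p)}=b_{ip}$ from $\Gamma$-admissibility of the old part together with $b_{g(p)f}=b_{pf}$ and $b_{g(i)f}=b_{if}$, which hold by the inductive $\Gamma$-equivariance of the $f$-column. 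Condition (iv) is not required when $j$ is frozen. This closes the induction and yields the lemma.

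There is no genuine difficulty here --- the statement is essentially a bookkeeping exercise --- and the only place demanding a little care is the last computation: checking that each new frozen column remains $\Gamma$-equivariant after an orbit mutation, where one must use correctly both that orbit mutation sums over a $\Gamma$-stable set and that the old part of $\bar{Q}$ is literally $Q$; everything else is a direct appeal to the hypothesis that $Q$ is globally foldable and to the mutation rule.
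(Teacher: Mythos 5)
The paper itself offers no argument for this lemma (it is quoted from [FWZ] with the remark that it ``follows easily from the definition''), so the only real question is whether your verification is sound, and there is one genuine problem with it: the base case of condition (ii). With the definition of $\Gamma$-admissibility used in this paper, (ii) requires $b_{ij}=b_{g(i)g(j)}$ for \emph{all} pairs $i,j$, so for a newly added frozen vertex $f$ with $g(f)=f$ it forces $b_{if}=b_{g(i)f}$, i.e.\ $f$ must be attached identically to every vertex of each $\Gamma$-orbit. The lemma's hypothesis ``some arrows connecting them to the mutable vertices'' does not give this, and you dispose of the difficulty by declaring equivariance of the new arrows to be ``implicit in the construction.'' That is not a proof of the stated lemma; it is a strengthening of its hypotheses. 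Worse, the strengthened hypothesis is violated in the very application the paper makes of Lemma \ref{fwz5}: for the framed quiver $\tilde{Q}$ with $\Gamma$ fixing each $i'$, the vertex $i'$ is joined only to $i$ and not to the other vertices of $[i]$, so $b_{i'i}=1\neq 0=b_{i'g(i)}=b_{g(i')g(i)}$ whenever $g(i)\neq i$. So under your reading the lemma would not apply where it is used; under a literal reading of the paper's condition (ii) the statement is simply false for non-equivariant attachments, and your argument does not detect or resolve this tension --- it only relocates it.

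The resolution consistent with [FWZ] and with the paper's later use (Lemmas \ref{twoacts}, \ref{sccinf}, \ref{gff}) is that the admissibility requirements touching frozen vertices should be the weaker ones: for rows indexed by the new, $\Gamma$-fixed vertices the only substantive constraint is the sign condition (iv), which is automatic since $b_{fj}b_{g(f)j}=b_{fj}^2\geq 0$, while the symmetry condition is imposed only where it is actually needed (on the mutable part, or orbit-aggregated). With that definition your remaining argument is correct and, in fact, becomes shorter: the key observation that the full subquiver on the old vertices transforms under orbit mutations exactly as $Q$ does (because the summation index $p$ in Definition \ref{om} ranges over old mutable vertices) is exactly right, and then (i), (iii), the mutable instances of (ii), and (iv) for old rows are inherited from global foldability of $Q$, while (iv) for new rows is the trivial square. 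Your inductive equivariance computation for the $f$-columns (reindexing $p\mapsto g(p)$ over the $\Gamma$-stable orbit $[k]$) is also correct as algebra, but it is only needed, and only available, under the added equivariance hypothesis. You should state explicitly which definition of admissibility you are proving the lemma for; as written, the proof neither establishes the lemma under the paper's stated definition nor matches the generality in which the paper invokes it.
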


Notice that for a locally finite quiver $Q$, the cluster algebra $\mathcal{A}(\mathbf x, Q)$, $F$-polynomials, $c$-vectors, and $g$-vectors could be defined naturally. If $Q$ is globally foldable with respect to $\Gamma$ and for any mutable  $i\in Q_0$, then define $\mu_{[i]} (x_j)$ by the rules:  $\mu_{[i]} (x_j) = \mu_j(x_j)$ if $j\in [i]$, and  $\mu_{[i]} (x_j) = x_j$ if $j\notin [i]$.  

\begin{Lemma}[\cite{HL}]\label{hlinf}
	Assume that  the locally finite quiver $Q$ is  globally foldable with respect to the group $\Gamma$. Then 
	we have the following results: for any mutable vertices $i_1, \dots, i_s$
	\begin{enumerate}
		\item[(1)] the cluster variables of  $\mathcal{A}(\mu_{[i_s]}\dots\mu_{[i_1]}(\mathbf x, Q))$ is the same as the cluster variables of $\mathcal{A}(\mathbf x,Q)$;
		\item[(2)]  any finite variables in $\mu_{[i_s]}\dots\mu_{[i_1]}(\mathbf x, Q)$ is contained in a cluster of $\mathcal{A}(\mathbf x,Q)$;
		\item[(3)]  any variable in $\mu_{[i_s]}\dots\mu_{[i_1]}(\mathbf x, Q)$ is a cluster variable of $\mathcal{A}(\mathbf x,Q)$;
		\item[(4)] any monomial with variables in  $\mu_{[i_s]}\dots\mu_{[i_1]}(\mathbf x, Q)$ is a cluster monomial of $\mathcal{A}(\mathbf x,Q)$;
		\item[(5)] each $F$-polynomial has the constant term $1$.	
		\end{enumerate}
		\end{Lemma}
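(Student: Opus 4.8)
The plan is to prove Lemma \ref{hlinf} by reducing the infinite (locally finite) quiver case to the finite case, which is essentially the content of the folding results of Huang--Li \cite{HL} for finite quivers, and then leveraging the local finiteness to transfer everything.

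First I would fix mutable vertices $i_1,\dots,i_s$ and the sequence of orbit mutations $\mu_{[i_s]}\cdots\mu_{[i_1]}$. The key observation is that although $Q$ may be infinite, local finiteness of $Q$ means that the orbit mutation $\mu_{[i_1]}$ only affects finitely many entries near the orbit $[i_1]$, and iterating, the whole composite sequence $\mu_{[i_s]}\cdots\mu_{[i_1]}$ touches only a finite ``zone'' of vertices. More precisely, I would isolate a finite full subquiver $Q'$ of $Q$ containing all vertices involved in any of the mutations (and all vertices connected to them by an arrow at any intermediate stage), together with the restricted $\Gamma$-action on that zone; by Lemma \ref{fwz5} one may freeze the boundary vertices so that $Q'$ (with the extra frozen vertices) is again globally foldable with respect to the induced group action. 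Then the corresponding statements for $Q'$ are exactly the finite-quiver folding statements, which are the results of Huang--Li in \cite{HL}: in the finite case, orbit mutation of the unfolding $(Q,\Gamma)$ realizes mutation of $B^{\Gamma}$ on cluster variables, cluster monomials, and $F$-polynomials.

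Next I would carry out the transfer to $Q$ itself. Statement (1) follows because each orbit mutation $\mu_{[i]}$ replaces the finitely many cluster variables indexed by $[i]$ by the $\mu_j(x_j)$, and these are cluster variables of $\mathcal{A}(\mathbf x, Q)$ since $j$ is a genuine mutable vertex; conversely the old ones are recovered by mutating back, so the two cluster-variable sets coincide. For (2), any finite collection of variables occurring in $\mu_{[i_s]}\cdots\mu_{[i_1]}(\mathbf x, Q)$ involves only finitely many orbits, hence lies entirely inside the finite zone $Q'$, and there the finite folding theorem of \cite{HL} places them in a single cluster of $\mathcal{A}(\mathbf x, Q')$, which is a subcluster of a cluster of $\mathcal{A}(\mathbf x, Q)$ after unfreezing. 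Statements (3) and (4) are immediate consequences of (1) and (2): a single variable is a cluster variable by (1) (or (3) of the finite case), and a monomial in such variables lies in a common cluster by (2), hence is a cluster monomial. Finally (5), that each $F$-polynomial has constant term $1$, again reduces to the finite zone: the $F$-polynomial of any cluster variable in the mutated seed only involves the finitely many $y$'s and the finitely many other $F$-polynomials of vertices in $Q'$ through the $F$-polynomial mutation rule, so the assertion is inherited from the finite folding result of \cite{HL}, where constant term $1$ is known.

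The main obstacle I anticipate is making the ``finite zone'' argument fully rigorous: one must check that orbit mutations genuinely stay local, i.e. that after performing $\mu_{[i_1]},\dots,\mu_{[i_{r}]}$ the support of the arrows that can still be affected by the remaining mutations is contained in a fixed finite set, and that freezing the boundary (via Lemma \ref{fwz5}) does not alter any of the cluster variables, $F$-polynomials, or cluster monomials one cares about. The subtlety is that orbit mutation $\mu_{[k]} = \prod_{j\in[k]}\mu_j$ is a product over a possibly infinite orbit, but local finiteness guarantees that only finitely many of the factors $\mu_j$ actually change the entries in any bounded region, and the $\Gamma$-admissibility conditions (iii)--(iv) ensure these factors commute in the relevant sense; once this bookkeeping is set up, everything else follows by citing \cite{HL} on the finite subquiver and translating back. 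I would therefore expect the proof to be short modulo this locality lemma, which may already be implicit in \cite{HL}.
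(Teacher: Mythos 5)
Your proposal takes a different route from the paper: the paper simply observes that statements (1)--(4) are proved in \cite{HL} for the case $B^{\Gamma}$ acyclic by an argument that never uses acyclicity, so it carries over verbatim to any globally foldable $Q$, and that (5) is exactly [Lemma 7.12, \cite{HL}]. You instead try to rebuild the proof by a reduction to finite quivers, and this is where there is a genuine gap.

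The central flaw is the ``finite zone'' claim: you assert that $\mu_{[i_1]}$ ``only affects finitely many entries'' and that the whole composite $\mu_{[i_s]}\cdots\mu_{[i_1]}$ ``touches only a finite zone of vertices.'' In the intended setting the $\Gamma$-orbits are typically \emph{infinite} (this is exactly what happens in Huang--Li's unfoldings of acyclic sign-skew-symmetric matrices, where $\Gamma$ is an infinite group), so $\mu_{[i]}=\prod_{j\in[i]}\mu_j$ changes infinitely many cluster variables and infinitely many matrix entries. Consequently there is no finite full subquiver $Q'$ containing all affected vertices, no ``restricted $\Gamma$-action'' on such a zone (every orbit leaves any finite set), and the boundary-freezing step via Lemma \ref{fwz5} cannot be set up as you describe. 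This is also precisely why item (2) of the lemma is stated only for \emph{finite} collections of variables: the orbit-mutated family $\mu_{[i_s]}\cdots\mu_{[i_1]}(\mathbf x,Q)$ is in general \emph{not} a cluster of $\mathcal{A}(\mathbf x,Q)$ (clusters arise from finite mutation sequences), whereas your arguments for (1) and (2) implicitly identify it with one. The correct locality statement, which is what \cite{HL} actually exploit, is pointwise: by local finiteness and admissibility condition (iii) (no arrows inside an orbit of mutable vertices, so the $\mu_j$, $j\in[k]$, commute), each individual vertex and its variable are affected by only finitely many of the single mutations in the sequence, hence each resulting variable is obtained by a finite ordinary mutation sequence, and any finite set of them can be reached by a common finite sequence; but this does not confine the whole orbit-mutation process to a finite subquiver. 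In addition, even granting your reduction, your argument for (1) only treats the variables of the orbit-mutated seed itself; to conclude that the two cluster algebras have the same cluster variables one must also control all seeds obtained from the orbit-mutated seed by further mutations, which your sketch does not address.
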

\begin{proof}
	The statements $(1)-(4)$ are proved in \cite{HL} for the case $B^{\Gamma}$ is acyclic.  It is easy to see that the proof works for all globally foldable quiver  $Q$.	The claim $(5)$ is 
	proved in  [Lemma 7.12,  \cite{HL}].
	\end{proof}

Let $Q$ be a locally finite quiver without frozen vertices. The corresponding {\em  framed quiver} $\tilde{Q}$  is obtained from $Q$ by adding frozen vertices $Q'_0 := \{i', i\in Q_0\}$ and arrows  $Q'_1 := \{i'\rightarrow i,  i\in Q_0\}$.  If $Q$ is globally foldable with respect to $\Gamma$, by  Lemma \ref{fwz5}, $\tilde{Q}$  is globally foldable with respect to  $\Gamma$ with $\Gamma$ fixing frozen vertices.

\begin{Lemma}\label{sccinf}
If  the locally finite quiver $Q$ without frozen vertices is globally foldable with respect to $\Gamma$,  then 
for any vertices $i_1,\dots,i_s\in Q_0$, there are no two frozen vertices $i', j'$ and a mutable vertex $k$ such that there are arrows of the form $i'\rightarrow k \rightarrow j'$  in  $\mu_{[i_s]}\dots\mu_{[i_1]}(\tilde{Q})$. 
\end{Lemma}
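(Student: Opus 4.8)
The plan is to reduce the statement to the skew-symmetric case of sign coherence of $c$-vectors, which is available for locally finite quivers via Lemma \ref{hlinf} and the standard results on quivers. Fix vertices $i_1,\dots,i_s\in Q_0$ and write $Q^{(s)} := \mu_{[i_s]}\dots\mu_{[i_1]}(\tilde{Q})$; note that the orbit mutations only involve mutable vertices, so they are legitimate and by Lemma \ref{fwz5} the framed quiver $\tilde Q$ is globally foldable with $\Gamma$ fixing all frozen vertices, hence so is $Q^{(s)}$. Suppose, for contradiction, that there exist frozen vertices $i',j'$ and a mutable vertex $k$ with arrows $i'\to k\to j'$ in $Q^{(s)}$.

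The key observation is how frozen vertices enter the picture. In $\tilde Q$ the only arrows incident to a frozen vertex $p'$ are the arrows $p'\to p$, so the column of the exchange matrix indexed by the frozen vertex $p'$ records, after a sequence of mutations at mutable vertices, exactly the $c$-vector $c_{p}$ of the corresponding cluster variable at the seed reached (with the initial seed at $\tilde Q$ and principal-type coefficients coming from the framing). Concretely, the entry counting arrows $i'\to k$ is (up to sign) the $i$-th coordinate of the $c$-vector $c_{k}$ at the mutated seed, and the entry counting arrows $k\to j'$ is (up to sign) the $j$-th coordinate of $c_{k}$ as well — or more precisely, the arrows into $k$ from frozen vertices and the arrows out of $k$ to frozen vertices are governed by the signs of the coordinates of the single $c$-vector attached to $k$. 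An arrow $i'\to k$ means the $i$-th coordinate of that $c$-vector is positive, and an arrow $k\to j'$ means the $j$-th coordinate is negative. This is precisely a violation of sign coherence of the $c$-vector $c_k$.

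So the heart of the argument is: the $c$-vectors of $\mathcal{A}(\mathbf x,\tilde Q)$ are sign-coherent. For this I would invoke that $\tilde Q$ is again a locally finite quiver (globally foldable, but that is irrelevant here — we only need it to be a quiver), and that for locally finite quivers the sign coherence of $c$-vectors holds: each cluster variable appearing in any seed is, by Lemma \ref{hlinf}, a cluster variable of $\mathcal{A}(\mathbf x,\tilde Q)$ lying in an actual cluster, and the finite-rank sign coherence theorem for skew-symmetric cluster algebras (\cite{DWZ}, or \cite{gross2018canonical}) applies after restricting to the finitely many rows and columns actually involved in the finite mutation sequence, since $Q$ is locally finite. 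Thus every $c$-vector coordinate vector attached to a mutable vertex $k$ in $Q^{(s)}$ is either all $\geq 0$ or all $\leq 0$, which rules out the simultaneous existence of a positive $i$-th coordinate and a negative $j$-th coordinate, contradicting the existence of the path $i'\to k\to j'$.

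The main obstacle I anticipate is bookkeeping rather than conceptual: one must check carefully that orbit mutation $\mu_{[i_r]}$ on the framed quiver, restricted to the frozen part, implements exactly the ordinary $C$-matrix mutation, i.e.\ that the "frozen columns" of the exchange matrix of $\tilde Q$ transform under $\mu_{[i_r]}=\prod_{j\in[i_r]}\mu_j$ the same way the $C$-matrix transforms under the composite of the individual mutations $\mu_j$, $j\in[i_r]$ — this uses condition (iii) in the definition of $\Gamma$-admissibility (no arrows within a mutable orbit) so that the mutations in $[i_r]$ commute and do not interact. Once this identification is in place, and once one confirms that the local finiteness of $Q$ lets us pass to a finite sub-quiver for the purpose of applying the known finite-rank sign coherence theorem, the contradiction is immediate. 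I would therefore structure the proof as: (1) reduce to sign coherence of $c$-vectors of $\mathcal{A}(\mathbf x,\tilde Q)$ via the frozen-column/$C$-matrix dictionary; (2) establish that dictionary is compatible with orbit mutation; (3) invoke finite-rank skew-symmetric sign coherence on a finite sub-quiver; (4) read off the contradiction with the assumed path $i'\to k\to j'$.
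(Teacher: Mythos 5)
Your route is genuinely different from the paper's. The paper does not pass through sign coherence of $c$-vectors for a finite-rank subquiver at all: it assumes the path $i'\rightarrow k\rightarrow j'$ exists, notes via Lemma \ref{hlinf}(1)--(4) that the variables of the orbit-mutated seed and the variable $x''$ obtained by one further mutation at $k$ are honest cluster variables of $\mathcal{A}(\tilde{\mathbf x},\tilde Q)$, specializes $x_i=1$ to pass to $F$-polynomials, and then sets all $y_i=0$: both terms of the numerator of the exchange relation at $k$ carry a frozen factor (one from $i'\rightarrow k$, one from $k\rightarrow j'$), so the right-hand side vanishes, while the left-hand side is $1$ by the constant-term-$1$ property (Lemma \ref{hlinf}(5), i.e.\ HL's Lemma 7.12). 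In other words the paper runs the classical ``constant term $1$ implies sign coherence'' argument directly in the infinite-rank folded setting, with Lemma \ref{hlinf} doing the heavy lifting. Your translation of the statement into sign coherence of the $c$-vector attached to $k$ (arrows $i'\rightarrow k$ versus $k\rightarrow j'$ being positive versus negative entries of the same frozen column) is correct and is exactly how the lemma is used later in Lemma \ref{gff}.

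The gap is in your step (3). You speak of ``the finitely many rows and columns actually involved in the finite mutation sequence,'' but the sequence of individual mutations is not finite: each orbit mutation $\mu_{[i_r]}=\prod_{j\in[i_r]}\mu_j$ is in general an infinite (commuting) product, since in the unfoldings relevant here the group $\Gamma$ and its orbits are infinite. So you cannot simply restrict to the vertices appearing in the mutation sequence and quote DWZ/GHKK. To make your reduction work you need a genuine locality (``light cone'') argument: show that orbit mutation preserves local finiteness, and then, working backwards through the $s$ orbit mutations, produce a finite full subquiver $S\subset Q_0$ and a finite subsequence of individual mutations such that (i) the arrows between $k$ and the frozen vertices in the final quiver are exactly those computed by mutating the framed quiver of $Q|_S$ (a finite-rank principal-coefficient quiver) along that finite sequence, and (ii) no frozen vertex $p'$ with $p\notin S$ ever becomes adjacent to $k$. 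Only after this identification does finite-rank sign coherence apply and yield the contradiction. This reduction is the real technical content of your route, not bookkeeping, and it is missing; by contrast, the appeal to Lemma \ref{hlinf} about cluster variables lying in actual clusters does no work for you, since what you need is a statement about the exchange matrix (the frozen columns), not about the variables. Either supply the locality argument in detail, or switch to the paper's shorter specialization argument, which bypasses the issue entirely.
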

\begin{proof}
	By Lemma \ref{fwz5}, we know that $\tilde{Q}$ is globally foldable with respect to $\Gamma$ with $\Gamma$ fixing frozen vertices, then $\mu_{[i_s]}\dots\mu_{[i_1]}(\tilde{Q})$  is well-defined.
	
	 If the claim is not true,  let us consider the cluster algebra  $\mathcal{A}(\tilde{\mathbf x}, \tilde{Q})$, here $\tilde{\mathbf x} = (\mathbf x, \mathbf y)$ and $\mathbf y = (y_i, i\in Q_0)$ are frozen variables.  Suppose that $(\tilde{\mathbf x}', \tilde{Q}')  =  \mu_{[i_s]}\dots\mu_{[i_1]}(\tilde{\mathbf x}, \tilde{Q})$. By Lemma \ref{hlinf}, the variables in $\tilde{\mathbf x}'$ and the variable 
	\[x'' = \frac{\prod\limits_{i'\rightarrow k} y_i \prod\limits_{i\rightarrow k} x'_{i}  +  \prod\limits_{k\rightarrow j'} y_j \prod\limits_{k\rightarrow j} x'_{j}}{x'_k}\]
	are cluster variables in $\mathcal{A}(\tilde{\mathbf x}, \tilde{Q})$. Under the specialization of $x_i = 1$ for all $i\in Q_0$, we have that
		\[F'' = \frac{\prod\limits_{i'\rightarrow k} y_i \prod\limits_{i\rightarrow k} F'_{i}  +  \prod\limits_{k\rightarrow j'} y_j \prod\limits_{k\rightarrow j} F'_{j}}{F'_k}.\]
By Lemma  \ref{hlinf},	the left side will be $1$ and the right side will be $0$ by letting $y_i=0$ for all $i\in Q_0$. This is a contradiction. 
		\end{proof}

\begin{Lemma}\label{gff}
	Let $Q$ be  a  globally foldable quiver with respect to $\Gamma$, and $\tilde{Q}$ its framed quiver. 
	Extend the action of $\Gamma$ on $Q_0$  to the action of $\Gamma$ on $\tilde{Q}_0 = Q_0$ by $g.(i') := g(i)'$ for each $i\in Q_0$.
	then  $\mu_{[i_s]}\dots\mu_{[i_1]}(\tilde{Q})$ is $\Gamma$-admissible for any sequence of orbit mutations with $i_1,\dots,i_s\in Q_0$.
	\end{Lemma}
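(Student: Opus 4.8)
The plan is to verify the four defining conditions of $\Gamma$-admissibility for the quiver $\tilde Q':=\mu_{[i_s]}\dots\mu_{[i_1]}(\tilde Q)$ with respect to the action in the statement, which I will call $\Gamma_2$ (so $g\cdot i'=g(i)'$); the main device is to play $\Gamma_2$ off against the other natural action $\Gamma_1$ on $\tilde Q_0$, for which $\Gamma$ fixes every frozen vertex. First I would record the set-up facts. By Lemma \ref{fwz5} the framed quiver $\tilde Q$ is globally foldable with respect to $\Gamma_1$, so every iterated orbit mutation of $\tilde Q$ taken with respect to $\Gamma_1$ is $\Gamma_1$-admissible (and skew-symmetric, since $\tilde Q$ is a quiver); and since $\Gamma_1$ and $\Gamma_2$ induce the same permutation of the mutable set $Q_0$, Lemma \ref{twoacts} shows that the two notions of orbit mutation agree at every step, so $\tilde Q'$ is an unambiguous quiver and is $\Gamma_1$-admissible. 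Because the two actions restrict to the same permutation of $Q_0$ and both preserve the frozen set $Q'_0$, conditions (i) and (iii) for $\Gamma_2$ are the same statements as for $\Gamma_1$ and are therefore already proved; likewise condition (ii) for pairs $i,j\in Q_0$, and condition (iv) for rows $i\in Q_0$, follow at once from $\Gamma_1$-admissibility of $\tilde Q'$.

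What remains is condition (ii) for pairs involving a frozen vertex and condition (iv) for a frozen row. For the latter I would apply Lemma \ref{sccinf} directly to $\tilde Q'$ (an iterated orbit mutation of the framed quiver): it forbids, at any mutable vertex $j$, the simultaneous presence of an arrow $l'\to j$ and an arrow $j\to m'$ from/to frozen vertices, and by skew-symmetry $j\to m'$ means $b'_{m'j}<0$; hence the entries $b'_{l'j}$ $(l\in Q_0)$ are all $\geq 0$ or all $\leq 0$, so $b'_{l'j}\,b'_{g(l)'j}\geq 0$ for every $g\in\Gamma$, which is (iv) for $i=l'$. For condition (ii) involving a frozen vertex I would induct on $s$, proving $b'_{ij}=b'_{g(i)g(j)}$ for all $i,j\in\tilde Q_0$ and $g\in\Gamma$. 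The base case $\tilde Q$ is immediate: the frozen-to-frozen block is zero and the frozen-to-mutable block is the identity matrix, which is $\Gamma_2$-equivariant because $g$ is a bijection of $Q_0$. For the inductive step, note that vertices in a single $\Gamma$-orbit $[m]\subseteq Q_0$ are pairwise non-adjacent (by conditions (ii) and (iii), which hold for every iterated $\Gamma_1$-orbit mutation of $\tilde Q$), so the orbit mutation $\mu_{[m]}$ is the composite of commuting ordinary mutations and its effect on the entire matrix is given by the formula of Definition \ref{om}; applying $g$ to that formula and re-indexing the sum over $p\in[m]$ via $p=g(q)$ (legitimate since $[m]$ is $\Gamma$-stable), the equivariance of the input matrix yields that of its orbit mutation $\mu_{[m]}$. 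This settles all four conditions.

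I expect the crux to be condition (iv) on frozen rows, i.e.\ recognizing that Lemma \ref{sccinf} says precisely that the columns of the frozen-to-mutable block of every iterated orbit mutation of $\tilde Q$ are sign-coherent, which is exactly what (iv) needs there; conditions (i) and (iii) are formal, and the equivariance in (ii) is a routine bookkeeping computation once one uses the non-adjacency of vertices within an orbit to identify orbit mutation with a composite of ordinary mutations. The one point demanding care is the interplay of the two actions $\Gamma_1$ and $\Gamma_2$: one must keep in mind throughout that $\tilde Q'$ is simultaneously the $\Gamma_1$-orbit-mutated quiver (so that $\Gamma_1$-admissibility, skew-symmetry, and Lemma \ref{sccinf} apply) and the quiver whose $\Gamma_2$-admissibility is being established, the two descriptions agreeing by Lemma \ref{twoacts}.
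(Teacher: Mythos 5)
Your proposal is correct and follows essentially the same route as the paper: induction on the length of the orbit-mutation sequence, using Lemma \ref{fwz5} and Lemma \ref{twoacts} to identify the orbit mutations for the two actions, Lemma \ref{sccinf} to get column sign-coherence of the frozen-to-mutable block (condition (iv) on frozen rows), and a direct equivariance check of the orbit-mutation formula for condition (ii). Your explicit separation of the two actions and the re-indexing argument is just a cleaner write-up of the paper's case-by-case verification of $c'_{g(i')g(j)}=c'_{i'j}$.
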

\begin{proof}
	We prove the claim by the induction on the length of orbit mutation sequences.
	
	 It is not hard to see that $\tilde{Q}$ is also $\Gamma$-admissible.  Assume that $\mu_{[i_s]}\dots\mu_{[i_1]}(\tilde{Q})$ is $\Gamma$-admissible, we only need to show that $\mu_{[k]}\mu_{[i_s]}\dots\mu_{[i_1]}(\tilde{Q})$ is also 
	$\Gamma$-admissible for every $k\in Q_0$.  
	
		For simplicity, we may abuse notations and let $\begin{pmatrix} B&-C^T\\C&M\end{pmatrix}$ denote the matrix of $\mu_{[i_s]}\dots\mu_{[i_1]}(\tilde{Q})$ and  $\begin{pmatrix} B'&-(C')^T\\C'&M'\end{pmatrix}$ to denote the matrix of $\mu_{[k]}\mu_{[i_s]}\dots\mu_{[i_1]}(\tilde{Q})$, where $C=(c_{i'j})_{i,j\in Q_0}$ and $C'=(c'_{i'j})_{i,j\in Q_0}$. Since $\mu_{[i_s]}\dots\mu_{[i_1]}(\tilde{Q})$ is $\Gamma$-admissible, by Lemma 
     	\ref{twoacts} and Lemma \ref{sccinf},  $C$ and $C'$ have the  column sign-coherent property and $M=M'=0$. Thus we have that $c'_{i'j}c'_{g(i')j} \geq 0$ for  $i,j\in Q_0$.

        Note that $C$ and $C'$ are related by the following relations by Definition :
        \[c'_{i'j}  = \begin{cases} -c_{i'j}, &\text{$j\in [k]$}\\ 
        	                                    c_{i'j} +\sum\limits_{p\in [k]}c_{i'p}b_{pj}, &\text{$j\notin [k], \,\,\exists\, p\in[k]$ such that $c_{i'p}>0, b_{pj}>0;$}\\
        	                                     c_{i'j} +\sum\limits_{p\in [k]}c_{i'p}b_{pj}, &\text{$j\notin [k], \,\,\exists\, p\in[k]$ such that $c_{i'p}<0, b_{pj}<0;$}\\
        	                                     c_{i'j}, &\text{otherwise.}
        	               \end{cases}	 \]
It is easy to check that $c'_{g(i')g(j)} = c'_{i'j}$ for all $i,j \in Q_0$. If $j\in [k]$, then $g(j)\in k$, and hence  $c'_{g(i')g(j)} = -c_{g(i')g(j)} = - c_{i'j} = c'_{i'j}$.  If  $j\notin [k],$ and $\exists\, p\in[k]$, such that $c_{i'p}>0, b_{pj}>0$, then $g(j)\notin [k]$, and $g(p)\in k$ satisfies that $c_{g(i') g(p)} = c_{i'p} >0$,  $b_{g(p)g(j)} = b_{pj}  >0$. Thus we have that 
\[  c'_{i'j} = c_{i'j}  +   \sum\limits_{p\in [k]}c_{i'p}b_{pj}  = c_{g(i')g(j)}  +   \sum\limits_{p\in [k]}c_{g(i')g(p)}b_{g(p)g(j)}   =  c_{g(i')g(j)}  +   \sum\limits_{p\in [k]}c_{g(i')p}b_{pg(j)}  = c'_{g(i')g(j)}.  \]
The other cases can be checked similarly.

This proves that $\mu_{[k]}\mu_{[i_s]}\dots\mu_{[i_1]}(\tilde{Q})$ is also 
$\Gamma$-admissible.
	\end{proof}

\begin{Theorem}\label{anyseed}
	Let $Q$ be a locally finite globally foldable quiver with respect to $\Gamma$. 
	Then  the positivity conjecture holds for the cluster algebra $\mathcal{A}(B^{\Gamma})$, i.e., for every cluster variable $x$ and every cluster $\mathbf x$, we have that $x$ is a Laurent polynomial with variables in $\mathbf x$ and positive coefficients.  
	
	Moreover, each $F$-polynomial has the constant term $1$, and thus the $c$-vectors have the sign-coherent	property.
		\end{Theorem}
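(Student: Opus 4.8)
The plan is to realize the cluster algebra $\mathcal{A}(B^{\Gamma})$ as a folding of the cluster algebra of the globally foldable quiver $Q$, and to transport the positivity and $F$-polynomial information from $Q$ (where it is known by Lemma \ref{hlinf}) down to the orbit level. More precisely, for a sequence of orbit mutations $\mu_{[i_s]}\dots\mu_{[i_1]}$ applied to the framed quiver $\tilde Q$, Lemma \ref{gff} guarantees that the result is $\Gamma$-admissible, so the associated matrix has the block form $\begin{pmatrix} B&-C^T\\ C&M\end{pmatrix}$ with $M=0$ (by Lemma \ref{sccinf}), and $C$ is column sign-coherent. The key observation is that the $C$-matrix, $G$-matrix and $F$-polynomials for $\mathcal{A}(B^{\Gamma})$ with principal coefficients are obtained from these $\Gamma$-folded data: a $c$-vector for the orbit algebra is the column sum $\sum_{i'\in[i]}$ of the corresponding rows of the unfolded $C$-matrix, and an $F$-polynomial for the orbit mutation is gotten from the unfolded $F$-polynomial by the substitution identifying the frozen variables in each orbit.

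\textbf{Key steps.} First I would set up the correspondence: starting from $\mathcal{A}(B^{\Gamma})$ with principal coefficients, build the framed quiver $\tilde Q$ of $Q$, extend the $\Gamma$-action to fix the new frozen vertices (Lemma \ref{fwz5} and the paragraph before Lemma \ref{sccinf}), and check that performing the orbit mutations $\mu_{[i_s]}\dots\mu_{[i_1]}$ on $\tilde Q$ produces, after folding, exactly the principal-coefficient seed of $\mathcal{A}(B^{\Gamma})$ at the corresponding vertex; this is essentially bookkeeping with the definition of $B^{\Gamma}$ and the orbit-mutation rule in Definition \ref{om}. Second, I would invoke Lemma \ref{hlinf}(3)--(4): every variable produced by an orbit-mutation sequence from $\tilde Q$ is a genuine cluster variable (indeed cluster monomial) of $\mathcal{A}(\tilde{\mathbf x},\tilde Q)$, hence by the Laurent phenomenon (Theorem \ref{laurentphen}) it is a Laurent polynomial in any initial cluster; and Lemma \ref{hlinf}(5) gives that each unfolded $F$-polynomial has constant term $1$. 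Third, I would specialize: a cluster variable $x$ of $\mathcal{A}(B^{\Gamma})$ in a cluster $\mathbf x$ lifts to a cluster monomial of $\mathcal{A}(\tilde{\mathbf x},\tilde Q)$, so its Laurent expansion has a specific non-negativity coming from the Laurent-with-positive-coefficients statement for $Q$ (which holds since $Q$ is skew-symmetric, where positivity is known — via \cite{LS}, or \cite{gross2018canonical}); restricting this expansion along the orbit identification preserves non-negativity of coefficients, yielding positivity for $\mathcal{A}(B^{\Gamma})$. Finally, constant term $1$ for the folded $F$-polynomials follows from the same specialization of Lemma \ref{hlinf}(5), and sign-coherence of the $c$-vectors for $\mathcal{A}(B^{\Gamma})$ follows because each orbit $c$-vector is a sum of rows of the column-sign-coherent unfolded $C$-matrix all of whose relevant entries already share a sign (this is where the $\Gamma$-admissibility condition (iv), $b_{ij}b_{g(i)j}\ge 0$, is used to ensure the summands do not cancel).

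\textbf{Main obstacle.} The delicate point is the compatibility between folding and the principal-coefficient bookkeeping: one must verify that the $C$-matrix of $\mathcal{A}(B^{\Gamma})$ really is recovered as orbit sums of the $C$-matrix of the unfolded framed quiver $\tilde Q$ after the matching sequence of orbit mutations, and that this identification is consistent with the mutation rules for $c$-vectors. Equivalently, the hard part is showing that the substitution identifying the $y_{i'}$ over each $\Gamma$-orbit intertwines the $F$-polynomial recursion upstairs with the $F$-polynomial recursion for $B^{\Gamma}$ downstairs — this requires that the exponents appearing (the entries $b_{jk,t}$ and $c_{jk,t}$) behave correctly under summing over orbits, which in turn relies on $M=0$ from Lemma \ref{sccinf} and the sign-coherence established in Lemma \ref{gff}. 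Once this intertwining is in place, positivity and constant-term-$1$ descend immediately, and sign-coherence of $c$-vectors is a formal consequence; so I expect essentially all the work to be in this lifting/descent compatibility, with no further genuinely new difficulty.
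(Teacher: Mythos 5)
Your proposal is correct and follows essentially the same route as the paper: the paper's (very terse) proof likewise reduces to principal coefficients and combines Lemma \ref{gff} (the framed quiver remains $\Gamma$-admissible under orbit mutations, so it unfolds the principal-coefficient matrix), Lemma \ref{hlinf} (variables produced by orbit mutations are cluster variables of the unfolded skew-symmetric algebra, with $F$-polynomials having constant term $1$), and known positivity for (locally finite, infinite-rank) skew-symmetric cluster algebras, with positivity and constant term $1$ descending through the orbit identification of variables exactly as you describe.
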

	
	\begin{proof}
		It is enough to prove positivity conjecture for cluster algebras with principal coefficients. This follows from Lemma \ref{gff},  Lemma \ref{hlinf} and the fact that positivity conjecture holds for skew-symmetric cluster algebras of infinite rank.		
		\end{proof}
	
	\begin{Theorem}[\cite{HL}]\label{hlunfolding}
		There exists an unfolding  for every sign-skew-symmetric matrix mutation equivalent to an acyclic sign-skew-symmetric matrix.
	\end{Theorem}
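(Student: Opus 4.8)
The plan is to split the statement into a reduction and a construction. For the reduction, suppose $B=\mu_{k_m}\cdots\mu_{k_1}(B_0)$ with $B_0$ acyclic, and suppose we have already produced a globally foldable quiver $(Q_0,\Gamma)$ with $Q_0^{\Gamma}=B_0$. Put $Q:=\mu_{[k_m]}\cdots\mu_{[k_1]}(Q_0)$. Global foldability of $Q_0$ gives that $Q$ is $\Gamma$-admissible, and in fact again globally foldable, since any further orbit mutation of $Q$ is an orbit mutation of $Q_0$. Using the standard compatibility of orbit mutation with folding, namely $\mu_{[k]}(P)^{\Gamma}=\mu_k(P^{\Gamma})$ for a globally foldable $P$ (see \cite{HL,FWZ}), one gets $Q^{\Gamma}=\mu_{k_m}\cdots\mu_{k_1}(B_0)=B$. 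Hence $(Q,\Gamma)$ is an unfolding of $B$, and it remains to handle the acyclic case itself.

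For the construction, fix a topological order $1<\cdots<n$ of the vertices of $B$, so that $b_{ij}\ge 0$ and $b_{ji}\le 0$ whenever $i<j$; in particular the maximal vertex $n$ is a sink. I would build, by induction on $n$, an infinite locally finite quiver $Q$ with a free $\Gamma$-action whose orbits are indexed by $1,\dots,n$: delete the sink $n$ to obtain the acyclic matrix $B'$, take an unfolding $(Q',\Gamma')$ of $B'$, then adjoin a new $\Gamma$-orbit of copies of $n$ together with arrows from copies of the other vertices into these new vertices, chosen so that $Q$ is again locally finite, the two folding relations involving the orbit $[n]$ recover the $n$-th row and column of $B$, and conditions (i)--(iv) hold at $n$ — where (iv) is automatic because $n$ is a sink, so all the new arrows point the same way. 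Realizing the two degree constraints simultaneously forces the orbits to be infinite in general, which is exactly why $B$ need not be skew-symmetrizable. Then $Q^{\Gamma}=B$ by construction, and $\Gamma$ extends $\Gamma'$ by permuting the new copies.

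The main obstacle — the heart of the inductive step — is proving that $Q$ is \emph{globally} foldable, i.e. that $\Gamma$-admissibility survives every finite sequence of orbit mutations; this, and not the bare construction, is where acyclicity is essential. The delicate condition is (iv): a mutation at an orbit $[k]$ can create a new arrow between copies $i'$ and $j'$ only through a length-two path $i'\to p\to j'$ (or its reverse) with $p\in[k]$, and one must rule out that the arrows so produced between $i'$ and $j'$, and between $g(i')$ and $j'$, ever acquire opposite orientations. I would establish this by induction on the length of the mutation sequence, showing that between any two $\Gamma$-orbits the arrow multiplicities always retain a sign-coherent, monotone pattern; acyclicity of $B$ is precisely what excludes the cyclic cancellations that would otherwise break this pattern. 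Once $(Q,\Gamma)$ is known to be globally foldable with $Q^{\Gamma}=B$, the reduction step finishes the general case; note that this is also exactly the hypothesis that powers Lemma \ref{gff} and Theorem \ref{anyseed}.
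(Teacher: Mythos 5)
This theorem is not proved in the paper at all: it is quoted from Huang--Li \cite{HL}, where its proof is the main technical achievement of that work. Measured against that, your sketch has a genuine gap exactly at the decisive point. Your reduction step (transporting an unfolding of an acyclic $B_0$ along orbit mutations to any mutation-equivalent $B$) is fine and standard. But in the construction step you defer the whole difficulty to the claim that the quiver you build is \emph{globally} foldable, i.e. that condition (iv) survives every finite sequence of orbit mutations, and you propose to prove this ``by induction on the length of the mutation sequence, showing that the arrow multiplicities retain a sign-coherent, monotone pattern.'' No such elementary induction is known, and this claim is essentially equivalent to the theorem itself: a single orbit mutation can create arrows between two orbits through many length-two paths simultaneously, and nothing in the local data of the previous step rules out that copies $i'$ and $g(i')$ acquire arrows to $j'$ with opposite orientations after several mutations. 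This is precisely why Huang and Li do not argue combinatorially: they construct the (infinite, strongly almost finite) unfolded quiver directly from $B$ and then prove preservation of admissibility under orbit mutations by categorifying the unfolding --- building a Frobenius $2$-Calabi--Yau category and using cluster-character/sign-coherence arguments there. Your sketch assumes the conclusion of that machinery rather than supplying a substitute for it.

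There is also a structural problem with the rank-induction itself. The observation that ``(iv) is automatic because $n$ is a sink'' only controls the initial quiver; after the first orbit mutation the sink property is destroyed, so it gives no leverage for iterated mutations, which is where (iv) can fail. Moreover, the induction hypothesis hands you an unfolding of the submatrix $B'$, but global foldability of $Q'$ says nothing about mutation sequences of the enlarged quiver $Q$ that involve the new orbit $[n]$: admissibility of $Q'$ under its own orbit mutations does not propagate to admissibility of $Q$, since mutating at $[n]$ changes arrows inside the copy of $Q'$ as well. So both the construction-by-induction and the key global-foldability claim would need substantially more input (in \cite{HL}, the categorical input) before this could be considered a proof.
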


Combining Theorem  \ref{anyseed} and Theorem \ref{hlunfolding}, we have the following results.
\begin{Theorem}[cf. \cite{HL}]\label{pc}
	The positivity conjecture holds for the sign-skew-symmetric cluster algebras with the initial seed mutation equivalent to an acyclic seed.	
	\end{Theorem}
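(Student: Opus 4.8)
The plan is to reduce the general sign-skew-symmetric statement to the quiver (skew-symmetric) case via the unfolding provided by Theorem \ref{hlunfolding}, and then invoke the positivity result for globally foldable quivers established in Theorem \ref{anyseed}. Concretely, let $B$ be a sign-skew-symmetric matrix whose matrix pattern contains an acyclic seed; after a finite sequence of mutations we may as well assume the initial matrix $B = B_{t_0}$ itself is acyclic, since positivity at one seed (for all clusters) is equivalent to positivity at any mutation-equivalent seed — both the cluster variables and the clusters of $\mathcal{A}$ are the same regardless of which seed we designate as initial. By Theorem \ref{hlunfolding} there is a locally finite quiver $Q$, globally foldable with respect to a group $\Gamma$, such that $B^{\Gamma} = B$, i.e. $(Q,\Gamma)$ is an unfolding of $B$.

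The next step is to transfer positivity from $\mathcal{A}(B^{\Gamma})$ downstairs. Theorem \ref{anyseed} already gives exactly this: for a locally finite globally foldable quiver $Q$, the positivity conjecture holds for $\mathcal{A}(B^{\Gamma})$ — every cluster variable expands as a Laurent polynomial with non-negative coefficients in the variables of any cluster. Since $B$ is mutation equivalent to an acyclic matrix, such an unfolding exists, so the hypothesis of Theorem \ref{anyseed} is met. Therefore positivity holds for $\mathcal{A}(B)$ when the initial seed is acyclic, and since positivity for all clusters does not depend on the choice of initial seed, it holds for any cluster algebra whose matrix pattern contains an acyclic seed. I would then remark that the same unfolding argument, combined with the last sentence of Theorem \ref{anyseed}, simultaneously yields that every $F$-polynomial for $\mathcal{A}(B)$ has constant term $1$ and hence the $c$-vectors are sign-coherent, although the statement as given only asserts positivity.

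The main obstacle — and it is one that is already absorbed into the cited results rather than something to be handled here — is the passage through infinite-rank skew-symmetric cluster algebras: the unfolding $Q$ of an $n\times n$ matrix is typically an infinite (though locally finite) quiver, so one cannot directly quote the finite-rank skew-symmetric positivity theorem of Gross--Hacking--Keel--Kontsevich or Lee--Schiffler. This is precisely why Theorem \ref{anyseed} was phrased in terms of positivity for skew-symmetric cluster algebras of infinite rank, and why Lemma \ref{hlinf} and Lemma \ref{gff} were needed to control how cluster variables, $F$-polynomials, and the column sign-coherence of the $C$-matrices of the framed quiver behave under orbit mutations. Given those inputs, the proof here is short: assume without loss of generality the initial seed is acyclic, apply Theorem \ref{hlunfolding} to obtain an unfolding, and apply Theorem \ref{anyseed}.

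\begin{proof}
Since the cluster variables and clusters of $\mathcal{A}$ do not depend on the choice of the initial seed, and positivity is a statement about the expansion of every cluster variable in every cluster, we may assume without loss of generality that the initial exchange matrix $B_{t_0}$ is acyclic. By Theorem \ref{hlunfolding}, there is a locally finite quiver $Q$ which is globally foldable with respect to some group $\Gamma$ and satisfies $B^{\Gamma} = B_{t_0}$. By Theorem \ref{anyseed}, the positivity conjecture holds for $\mathcal{A}(B^{\Gamma}) = \mathcal{A}$. This proves the claim.
\end{proof}
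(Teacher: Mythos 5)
Your proposal is correct and follows the paper's own route: the paper proves Theorem \ref{pc} precisely by combining Theorem \ref{hlunfolding} (existence of an unfolding for any matrix mutation equivalent to an acyclic one) with Theorem \ref{anyseed} (positivity for globally foldable quivers). Your preliminary reduction to an acyclic initial seed is harmless but unnecessary, since Theorem \ref{hlunfolding} already applies directly to matrices that are merely mutation equivalent to acyclic ones.
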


\begin{Corollary}\label{printotal}
	If $B=(b_{ij})_{n\times n}$ is a sign-skew-symmetric matrix which is mutation equivalent to an acyclic sign-skew-symmetric matrix, then for any sequence $k_1, \dots, k_s \in [1,n]$, the matrix $\mu_{k_s}\dots\mu_{k_1}\begin{pmatrix} B&-I_n\\I_n&0 \end{pmatrix}$ is sign-skew-symmetric.  
	\end{Corollary}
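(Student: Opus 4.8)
The idea is to exhibit $\begin{pmatrix} B & -I_n\\ I_n & 0\end{pmatrix}$ as the folding of a framed quiver and to reduce the claim to the $\Gamma$-admissibility produced by Lemma \ref{gff}. Since $B$ is mutation equivalent to an acyclic sign-skew-symmetric matrix, Theorem \ref{hlunfolding} gives an unfolding $(Q,\Gamma)$ of $B$, so $Q$ is a locally finite globally foldable quiver without frozen vertices and $B^{\Gamma}=B$. I would form the framed quiver $\tilde Q$ and extend the $\Gamma$-action to $\tilde Q_0$ by $g.(i')=g(i)'$; evaluating $(\tilde Q)^{\Gamma}$ via $b_{[i][j]}=\sum_{i'\in[i]}b_{i'j}$ shows that its mutable--mutable block is $Q^{\Gamma}=B$, its two off-diagonal blocks are $-I_n$ and $I_n$ (because in $\tilde Q$ the frozen vertex $i'$ meets only its mirror $i$), and its frozen--frozen block is $0$, i.e. $(\tilde Q)^{\Gamma}=\begin{pmatrix} B & -I_n\\ I_n & 0\end{pmatrix}$. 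By Lemma \ref{gff} the quiver $R_s:=\mu_{[k_s]}\cdots\mu_{[k_1]}(\tilde Q)$ is $\Gamma$-admissible, and since folding commutes with orbit mutation at mutable orbits (a standard property of the orbit mutation of Definition \ref{om}; see \cite{HL,FWZ}), $\mu_{k_s}\cdots\mu_{k_1}\begin{pmatrix} B & -I_n\\ I_n & 0\end{pmatrix}=(R_s)^{\Gamma}$. Moreover, by Lemma \ref{sccinf} (applied inductively along the orbit mutation sequence) the frozen--frozen block of the skew-symmetric matrix $R_s$ is still $0$. Hence it is enough to prove: \emph{if $R$ is a locally finite $\Gamma$-admissible skew-symmetric matrix whose frozen--frozen block vanishes, then $R^{\Gamma}$ is sign-skew-symmetric.}

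For this, I would fix two orbits $[i],[j]$ with representatives $i,j$, so $R^{\Gamma}_{[i][j]}=\sum_{i_0\in[i]}R_{i_0,j}$ and $R^{\Gamma}_{[j][i]}=\sum_{j_0\in[j]}R_{j_0,i}$, and first establish that the family $\{R_{i_0,j}:i_0\in[i]\}$ is sign-coherent (all $\ge 0$ or all $\le 0$), and likewise for $\{R_{j_0,i}:j_0\in[j]\}$. When $[j]$ is mutable this is exactly admissibility condition (iv); when $[j]$ is frozen one passes through skew-symmetry to $R_{j,i_0}$, uses $\Gamma$-invariance (ii) and transitivity of $\Gamma$ on $[i]$ to move $i_0$ to a fixed representative, and then applies condition (iv) \emph{at the mutable vertex $i_0$} (this is where mutability of $[i]$ is used) to compare the resulting entries across $[j]$; the case of two frozen orbits is vacuous since that block of $R$ is zero. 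Granting this sign-coherence, skew-symmetry of $R$ and the equality $R_{i_0,j}=-R_{j,i_0}=-R_{g(j),i}$ for a suitable $g\in\Gamma$ with $g(i_0)=i$ force $R^{\Gamma}_{[i][j]}$ and $R^{\Gamma}_{[j][i]}$ to carry opposite signs and to vanish simultaneously, giving $R^{\Gamma}_{[i][j]}R^{\Gamma}_{[j][i]}\le 0$ with equality iff both factors are zero. Specializing $R=R_s$ and noting that the mutable block of $(\tilde Q)^{\Gamma}$ is $B$ also recovers, as a byproduct, that $\mu_{k_s}\cdots\mu_{k_1}(B)$ is sign-skew-symmetric.

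The hard part will be the sign-coherence of $\{R_{i_0,j}:i_0\in[i]\}$ in the mixed case $[i]$ mutable, $[j]$ frozen: condition (iv) only controls the variation of a row index against a \emph{fixed mutable column}, so turning it into control of the (mutable) row index $i_0$ against a fixed \emph{frozen} column $j$ requires the detour through skew-symmetry and $\Gamma$-equivariance, and the bookkeeping that distinguishes ``all $\ge 0$'' from ``all $\le 0$'' (and tracks when strict signs occur, which is what yields the ``$=0$ iff both $=0$'' part of sign-skew-symmetry) must be kept consistent throughout. A secondary, more routine point is to justify that folding commutes with orbit mutation and that the frozen--frozen block of $R_s$ remains zero in the locally infinite setting, which should follow from Lemma \ref{sccinf}, Lemma \ref{fwz5} and the orbit mutation formula but deserves to be stated carefully.
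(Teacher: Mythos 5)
Your proposal is correct and follows essentially the same route as the paper: use Theorem \ref{hlunfolding} to unfold $B$, pass to the framed quiver with the mirrored action $g.(i')=g(i)'$, and invoke Lemma \ref{gff} so that $(\tilde Q,\Gamma)$ becomes an unfolding of $\begin{pmatrix} B & -I_n\\ I_n & 0\end{pmatrix}$, whence all its mutations are sign-skew-symmetric. The paper's proof stops at this citation, leaving implicit the standard folding facts (the computation $(\tilde Q)^{\Gamma}=\begin{pmatrix} B & -I_n\\ I_n & 0\end{pmatrix}$, commutation of orbit mutation with folding, vanishing of the frozen--frozen block via Lemma \ref{sccinf}, and sign-skew-symmetry of the folded matrix of a $\Gamma$-admissible quiver) which you spell out and verify correctly.
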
 
\begin{proof}
	By Theorem \ref{hlunfolding}, there is an unfolding $(Q, \Gamma)$ for $B$.  By  Lemma \ref{gff}, there is an unfolding for the matrix $\begin{pmatrix} B&-I_n\\I_n&0 \end{pmatrix}$.

	\end{proof}

\begin{Conjecture}\label{conl}
Corollary \ref{printotal} holds for all totally sign-skew-symmetric matrices.
\end{Conjecture}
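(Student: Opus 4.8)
The plan is to run the same kind of argument that proves Corollary \ref{printotal}, replacing its input ``$B$ is mutation equivalent to an acyclic matrix, hence admits an unfolding'' by the now-available general results on totally sign-skew-symmetric cluster algebras. Write $\tilde B=\begin{pmatrix}B&-I_n\\ I_n&0\end{pmatrix}$, and for a mutation sequence $k_1,\dots,k_s$ ending at a vertex $t$ decompose $\mu_{k_s}\cdots\mu_{k_1}(\tilde B)=\begin{pmatrix}P_t&Q_t\\ R_t&S_t\end{pmatrix}$ into $n\times n$ blocks. First I would dispose of the two easy blocks: since the mutation rule for an entry $b'_{ij}$ with $i,j,k$ all mutable involves only mutable entries, $P_t=B_t$ is the exchange matrix at $t$, which is sign-skew-symmetric \emph{by the hypothesis that $B$ is totally sign-skew-symmetric} (the only place this hypothesis enters); and comparing the matrix mutation rule on the lower-left block with the $C$-matrix mutation rule shows $R_t=C^{t_0}_t$. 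So the conjecture reduces to proving (i) $S_t=0$, and (ii) the mixed blocks satisfy $(Q_t)_{ij}(R_t)_{ji}\le 0$, with equality only when both entries vanish.

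For (i), and for the ``$\le 0$'' half of (ii), I would reuse the argument of Lemma \ref{sccinf}, whose only real ingredient is that every $F$-polynomial has constant term $1$ --- now known for \emph{all} totally sign-skew-symmetric cluster algebras as a consequence of the positivity theorem of Li and Pan \cite{LP} (and earlier for the acyclic case, Lemma \ref{hlinf}(5), cf.\ \cite{HL}). Granting it, the argument shows that in $\mu_{k_s}\cdots\mu_{k_1}(\tilde B)$ no mutable vertex $k$ simultaneously receives an arrow from a frozen vertex and sends an arrow to a frozen vertex: otherwise the cluster variable produced by one further mutation at $k$ would have an $F$-polynomial vanishing at $y=0$. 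An arrow between two frozen vertices can only be created by a mutation at a mutable vertex lying on a frozen-to-frozen path, which is precisely the forbidden configuration, so $S_t=0$; and the same configuration rules out a mutable-frozen pair of arrows pointing both ways, which gives $(Q_t)_{ij}(R_t)_{ji}\le 0$.

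What remains is the ``equality only when both vanish'' half of (ii), i.e.\ the support relation $(Q_t)_{ij}=0\iff (R_t)_{ji}=0$; equivalently, since $R_t=C^{t_0}_t$ has sign-coherent columns (again \cite{LP}, and \cite{HL} for the acyclic case), one wants $\operatorname{sign}(Q_t)_{ij}=-\operatorname{sign}(R_t)_{ji}$ for all $i,j$. I would attempt this by induction on $s$, carrying this sign relation as the inductive hypothesis and using column sign-coherence of $C_t$ to force the brackets $[\,\cdot\,]_+$ that appear under mutation. Several cases close at once: when the relevant entry $(B_t)_{ik}$ vanishes nothing changes, and when sign-coherence of the $k$-th $c$-vector kills one of the brackets, $Q_t$ and $R_t$ are modified in parallel and the relation persists.

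The main obstacle is the remaining case of the induction, where at the mutation vertex $k$ one has $(B_t)_{ik}>0>(B_t)_{ki}$ while the $j$-th entries of the $k$-th and $i$-th $c$-vectors have opposite signs: then $(Q_t)_{ij}$ is shifted by $(B_t)_{ik}\,(Q_t)_{kj}$ and $(R_t)_{ji}$ by $-\lvert (B_t)_{ki}(R_t)_{jk}\rvert$, and in the skew-symmetrizable case these shifts are tied together by the skew-symmetrizing constants --- indeed there $(Q_t)_{ij}=-(d_j/d_i)(R_t)_{ji}$, forcing both blocks to rescale consistently --- whereas for a merely sign-skew-symmetric $B$ there are no such constants and the magnitudes $\lvert (B_t)_{ik}\rvert$ and $\lvert (B_t)_{ki}\rvert$ are uncorrelated, so a priori the sign relation can fail. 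In effect the identity $\operatorname{sign}(Q_t)=-\operatorname{sign}\big((C^{t_0}_t)^{T}\big)$ is, in this setting, about as deep as the tropical $C/G$-duality (\ref{005}), which the paper only obtains under the {\bf Assumption}. For this reason I expect the realistic route to be the other one behind Corollary \ref{printotal}: prove that \emph{every} totally sign-skew-symmetric matrix admits an unfolding by a (possibly infinite-rank) skew-symmetric quiver --- a strengthening of Theorem \ref{hlunfolding} --- after which Lemma \ref{gff} yields the conclusion exactly as in the proof of Corollary \ref{printotal}; establishing that unfolding is itself the hard part.
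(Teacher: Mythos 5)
The statement you are trying to prove is stated in the paper as an open conjecture (Conjecture \ref{conl}): the paper gives no proof of it, only the observation that it implies the {\bf Assumption}, and your proposal does not close that gap either, so what you have is a partial reduction, not a proof. Your block decomposition and the identifications $P_t=B_t$, $R_t=C^{t_0}_t$ are correct, and you correctly locate the crux: writing $Q_t$ for the upper-right block, one has $Q_t=-\tilde C_t^{\,T}$ where $\tilde C_t$ is the $C$-matrix pattern of $-B^T$ (mutation commutes with $M\mapsto -M^T$), so sign-skew-symmetry of the mutated $2n\times 2n$ matrix amounts to the statement that $C_t$ and $\tilde C_t$ have the same sign pattern \emph{entrywise} (each entry strictly of the same sign, or both zero), a strengthening of the {\bf Assumption}; this is exactly the step you admit you cannot prove, and the paper cannot either.

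Beyond that honest concession, there is a concrete flaw in the part you do claim: you assert that $S_t=0$ and the inequality half of the mixed condition follow by ``reusing the argument of Lemma \ref{sccinf}, whose only real ingredient is that every $F$-polynomial has constant term $1$.'' That argument does not transfer. In the unfolded (skew-symmetric) setting the mutable-to-frozen arrows are forced to be $-C^T$ by skew-symmetry, so the specialization argument about the exchange relation at $k$ genuinely constrains both halves of the configuration $i'\rightarrow k\rightarrow j'$; in the general totally sign-skew-symmetric setting the principal-coefficients cluster algebra $\mathcal{A}(B)$ only sees the block $R_t=C_t$ (through the $y$-dynamics and exchange relations), while $Q_t$ and $S_t$ are extra matrix data invisible to it. Consequently the constant-term-$1$ theorem of \cite{LP} gives you column sign-coherence of $C_t$ and of $\tilde C_t$ separately, but it does not rule out an entry of $Q_t$ and the corresponding entry of $R_t$ having the same sign, nor does it by itself keep $S_t=0$ (the increment to $S_t$ at a mutation is controlled precisely by the sign compatibility of $Q_t$ and $R_t$). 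So your items (i) and the ``$\le 0$'' half of (ii) collapse into the same unproved entrywise comparison of $C_t$ with $\tilde C_t$. Your fallback suggestion --- prove that every totally sign-skew-symmetric matrix admits an unfolding, extending Theorem \ref{hlunfolding}, and then conclude via Lemma \ref{gff} as in Corollary \ref{printotal} --- is a reasonable strategy, but it is itself an open strengthening of \cite{HL}, so it cannot be cited as established.
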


\section{The properties of $G$-matrices}\label{pg}
The tropical dualities for $G$-matrices and $C$-matrices were studied by Nakanishi and Zelevinky for the skew-symmetrizable cluster algebras \cite{NZ}. 
Notice that Nakanishi and Zelevinsky's proof depends on the  sign coherence of $c$-vectors. In this section,  we   consider the dual phenomenon for the totally sign-skew-symmetric case. In this section and the next section, we always assume that $|I| = n< \infty$.

As we mentioned in the Introduction, Fomin and Zelevinsky proposed a series of conjectures on $G$-matrices and $C$-matrices in their paper \cite{fomin2007cluster}. Let us recall them.

\begin{Conjecture}\label{fzconj} For the totally sign-skew-symmetric matrix pattern with $B_{t_0} = B$, the following statements are true.
	\begin{enumerate}
		\item[(i)]  For each $t\in \mathbb{T}_n$, the $C$-matrix $C_t$ has the column sign-coherent property.
		\item[(ii)]  For each $t\in \mathbb{T}_n$, the $g$-vectors $g_{1,t}, \dots, g_{n,t}$ form a $\mathbb{Z}$-basis of the lattice $\mathbb{Z}^n$.
		\item[(iii)] For each $t\in \mathbb{T}_n$, the $G$-matrix $G_t$ has the row sign-coherent property.
		\item[(iv)] Let $t_0 \frac{k}{\quad\,}t_1$ be two adjacent vertices in $\mathbb{T}_n$, and let $B^1 = \mu_k(B)$.  Then, for any $t\in \mathbb{T}_n$ and $\mathbf a \in \mathbb{N}^n$, then $g$-vectors $g_{\mathbf a,t}^{B, t_0} := \sum_{l=1}^n a_l g_{l,t}^{B,t_0} = (g_1,\dots, g_n)^T$ and $g_{\mathbf a,t}^{B^1, t_1} := \sum_{l=1}^n a_l g_{l,t}^{B^1,t_1} = (g'_1,\dots, g'_n)^T$  are related as follows:
		\[g'_j = \begin{cases}
			-g_k \quad & j=k;\\
			g_j + [b_{jk}]_+g_k - b_{jk}^0\mathrm{min}(g_k, 0) \quad & j\neq k.
		\end{cases}
		\]
	\end{enumerate}
\end{Conjecture}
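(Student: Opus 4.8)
\emph{Proof proposal.} Statement (i) — column sign-coherence of every $C$-matrix — is already known for all totally sign-skew-symmetric cluster algebras (\cite{LP}; in the acyclic case it is also contained in Theorem \ref{anyseed}), so I take it as an input and the real content is (ii)--(iv). I would prove these under the {\bf Assumption}; since the Assumption holds in the acyclic case by Corollary \ref{printotal}, the argument is then unconditional there. The idea is to follow Nakanishi--Zelevinsky \cite{NZ} essentially line by line, observing that skew-symmetrizability enters their proofs in only two ways: (a) to resolve the signs appearing in the $C$-, $G$- and $F$-mutation formulas, and (b) to form the ``transposed'' pattern attached to $-B^{T}$ and to identify its $C$-matrices with the $G$-matrices of $B$. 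Input (a) is provided by (i). Input (b) is where the Assumption enters: it guarantees that the $2n\times 2n$ framed exchange matrices obtained from $\begin{pmatrix}B & -I_n\\ I_n & 0\end{pmatrix}$ by mutations remain sign-skew-symmetric, so that the auxiliary patterns used in \cite{NZ} are genuine cluster patterns, for which (i) again applies.

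The heart of the matter is the tropical duality. Comparing the $C$- and $G$-mutation rules along $\mathbb{T}_n$ and resolving all signs by (i), one proves by induction on the distance from $t_0$ the Nakanishi--Zelevinsky relations between $G$-matrices and $C$-matrices (each is, up to transpose, the inverse of the other for the appropriate base point), where the inductive step also uses the Assumption so that the transposed and framed patterns occurring in the comparison are legitimate. This yields at once that every $G$-matrix is the inverse of some $C$-matrix and that the inverse of every $C$-matrix is again a $C$-matrix, i.e.\ claims (\ref{005})--(\ref{006}); and since $G^{t_0}_t$ is thereby exhibited (up to transpose) as the inverse of an integral matrix, $\det G^{t_0}_t=\pm1$, so $g_{1,t},\dots,g_{n,t}$ is a $\Z$-basis of $\Z^n$ — this is (ii).

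Statement (iii) then follows, since by the duality the rows of $G^{t_0}_t$ are, up to reindexing, the columns of some $C$-matrix, and columns of $C$-matrices are sign-coherent by (i). (Alternatively, (iii) follows from (iv) by induction on the distance from $t_0$: it holds for $G_{t_0}=I_n$, and in the mutation rule of (iv) the quantity $\min(g_k,0)$ is the same in every coordinate because of sign-coherence, so row sign-coherence is preserved.) For (iv) I would argue at the level of gradings: both $g_{\mathbf a,t}^{B,t_0}$ and $g_{\mathbf a,t}^{B^1,t_1}$ are the degree of the single cluster monomial $\prod_l x_{l,t}^{a_l}$, the first with respect to the $\Z^n$-grading of Theorem \ref{laurentphen}(2) attached to $t_0$ and the second with respect to the one attached to $t_1=\mu_k(t_0)$; since $\deg(y_i)=-\mathbf b_{i}$ and $\mathbf b^1_i$ is the $i$-th column of $B^1=\mu_k(B)$, the two gradings are intertwined by the linear map fixing $\mathbf e_j$ for $j\neq k$ and sending $\mathbf e_k$ to $-\mathbf e_k+\sum_j[b^0_{jk}]_+\mathbf e_j$, and combining this with the fact that $F$-polynomials have constant term $1$ (equivalently (i)), which controls the change in $F$-polynomial exponents, leaves precisely the formula in (iv).

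The main obstacle is the tropical-duality step — establishing that each $G$-matrix is the inverse of a $C$-matrix — without a symmetrizer. In \cite{NZ} one uses that $-B^{T}$ is skew-symmetrizable with the same diagonal matrix $D$ and that the $G$-versus-$C$ identities are ``conjugate by $D$''; for a general totally sign-skew-symmetric $B$ there is no such $D$, and although $-B^{T}$ and the framed matrices are again sign-skew-symmetric, keeping track of how the bottom-left and top-right blocks of the mutated framed matrices relate to the $C$- and $G$-data is a careful finite sign chase, carried out using sign-skew-symmetry of every intermediate matrix — exactly what the Assumption provides — together with (i). Once this bookkeeping is in place, (ii)--(iv), and — for the fan statement (\ref{009}) — the argument of \cite{R}, go through much as in the skew-symmetrizable case.
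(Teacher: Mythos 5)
Your proposal follows essentially the same route as the paper: take (i) from Li--Pan, establish the Nakanishi--Zelevinsky tropical duality between the pattern of $B$ and that of $-B^{T}$ by induction on $d(t_0,t)$, with the \textbf{Assumption} supplying exactly the needed agreement of column signs $\varepsilon_k(C_t)=\varepsilon_k(\tilde{C}_t)$ (Propositions \ref{sdual} and \ref{dualmut}), then read off (iii) and (iv), the acyclic case following because Corollary \ref{printotal} yields the Assumption. The only minor divergences are that the paper proves (ii) unconditionally by a direct determinant induction on the elementary mutation matrices (Theorem \ref{unim}) rather than through the duality, deduces (iv) in one line from Proposition \ref{dualmut} (a) and (d) rather than by your grading sketch, and states the Assumption literally as the sign agreement between $C$ and $\tilde{C}$, which is implied by (rather than identical to) the framed-matrix statement of Conjecture \ref{conl} that you invoke.
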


In the skew-symmetrizable case, these conjectures are proved by a series of papers \cite{DWZ,gross2018canonical}. In the totally sign-skew-symmetric case,
the Conjectures (i-iv) were proved by Huang and Li \cite{HL} and by Cao, Huang and Li \cite{CHL2} for the acyclic sign-skew-symmetric case  using the folding  theory and categorification, and the Conjecture \ref{fzconj} (i) was proved by Li and Pan in  full generality. In the following sections, we show that under an assumption Conjecture (ii-iv) are true. In particular, for the acyclic sign-skew-symmetric matrix, we obtain these results.  Moreover, we show the duality between $G$-matrices and $C$-matrices, which has been only proved for the skew-symmetrizable case by Nakanishi and Zelevinsky \cite{NZ}. The essential idea of the proofs is developed by Nakanishi and Zelevinsky.

\subsection{Mutations revisited}
The mutations of $G$-matrices and $C$-matrices are given in the Section \ref{sec2}. Now we re-write them in the matrix form.  Let $\mathbb{T}_n$ be the $n$-regular tree and $B$ be a totally sign-skew-symmetric matrix. Then we have an exchange pattern of matrices such that $B_{t_0} = B$.  Now for each vertex $t\in \mathbb{T}_n$, let us replace $B_t$ by a triple $(B_t, C_t, G_t)$, where $C_t$ and $G_t$ are respectively  the corresponding $G$-matrix and $C$-matrices at $t$ with respect to the initial vertex $t_0$. Then we obtain a new pattern and we call it the {\em matrix pattern.} 

Let us introduce some notations we will need in the following. Assume that $A=(a_{ij})_{n\times n}$ is a real matrix, and $k\in [1,n]$.  
\begin{enumerate}
	\item  $[A]_+$   denotes the $n\times n$ matrix obtained from $A$ by replacing $a_{ij}$by $ max\{a_{ij}, 0\}  $ for all $i , j $. 
	\item  $A^{k\cdot}$ denotes the $n\times n$ matrix obtained from $A$ by replacing $a_{ij}$ for $i\neq k$ with $0$.
	\item  $A^{\cdot k}$ denotes the $n\times n$ matrix obtained from $A$ by replacing $a_{ij}$ for $j\neq k$ with $0$.
	\item $J_k$ denotes  the $n\times n$ matrix obtained from the identity matrix $I_n$ by replacing the $k$-th diagonal element $1$  with $-1$.
\end{enumerate}

\begin{Proposition}[\cite{fomin2007cluster, NZ}]\label{gcmf}
	Let $B= B_{t_0}$ be a totally sign-skew-symmetric matrix.  In the matrix pattern, for each 
	$t\frac{k}{\quad\,}t'$ in $\mathbb{T}_n$, we have that 
	\[C_{t'} = C_t(J_k + [\varepsilon B_t]_+^{k\cdot}) + [-\varepsilon C_t]_+^{\cdot k}B_t,\]
	\[G_{t'} = G_t(J_k + [-\varepsilon B_t]_+^{\cdot k}) - B_{t_0}[-\varepsilon C_t]^{\cdot k},\]
	where $\varepsilon \in \{1, -1\}$.
\end{Proposition}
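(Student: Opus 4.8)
The plan is to extract both identities from the scalar mutation rules for $C$-matrices and $G$-matrices already recorded in Section \ref{sec2} (following \cite{fomin2007cluster}): expand each side entry by entry and compare, handling the $k$-th column and the columns $j\neq k$ separately. The matrix operations in the statement are tailored to exactly this bookkeeping. The factor $J_k$ flips the sign of the $k$-th column and fixes the rest. The matrix $[\varepsilon B_t]_+^{k\cdot}$ is supported on its $k$-th row, so right-multiplying $C_t$ by it adds $c_{ik,t}\,[\varepsilon b_{kj,t}]_+$ to the $(i,j)$-entry and leaves the $k$-th column untouched (as $b_{kk,t}=0$). The matrices $[-\varepsilon C_t]_+^{\cdot k}$ are supported on their $k$-th column, so $[-\varepsilon C_t]_+^{\cdot k}B_t$ adds $[-\varepsilon c_{ik,t}]_+\,b_{kj,t}$ to the $(i,j)$-entry, while $B_{t_0}[-\varepsilon C_t]_+^{\cdot k}$ (with the $+$-truncation, as in \cite{NZ}) modifies only the $k$-th column, adding there the corresponding $B_{t_0}$-combination of $c$-vectors. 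After this, the verification is pure $\max/\min$ arithmetic, whose only elementary inputs are $[x]_+-[-x]_+=x$ and $[-x]_+=-\min(x,0)$.

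For the $C$-matrix, the $k$-th column of the right-hand side is $-c_{\cdot k,t}=c_{\cdot k,t'}$, since both correction terms vanish there. For $j\neq k$ the $(i,j)$-entry of the right-hand side is $c_{ij,t}+c_{ik,t}[\varepsilon b_{kj,t}]_++[-\varepsilon c_{ik,t}]_+\,b_{kj,t}$; for $\varepsilon=1$ this is immediately $c_{ij,t}+[b_{kj,t}]_+c_{ik,t}-b_{kj,t}\min(c_{ik,t},0)$ by $[-x]_+=-\min(x,0)$, and for $\varepsilon=-1$ one subtracts the two expressions and cancels using $[x]_+-[-x]_+=x$, so both choices give the same value, namely the scalar recursion $c_{ij,t'}=c_{ij,t}+\mathrm{sign}(c_{ik,t})[c_{ik,t}b_{kj,t}]_+$. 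Thus the $C$-matrix identity holds for either $\varepsilon$, with no further hypotheses.

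For the $G$-matrix, the columns $j\neq k$ of the right-hand side are $g_{\cdot j,t}=g_{\cdot j,t'}$, since all corrections live in the $k$-th column. The $k$-th column of the right-hand side is $-g_{\cdot k,t}+\sum_{s}[-\varepsilon b_{sk,t}]_+\,g_{\cdot s,t}-\sum_{s}[-\varepsilon c_{sk,t}]_+\,\mathbf{b}_{s,t_0}$; for $\varepsilon=1$ this is exactly the scalar recursion for $g_{\cdot k,t'}$ from Section \ref{sec2} (with the initial matrix $B_{t_0}$ in the last sum, as forced by the $\mathbb{Z}^{n}$-grading of Theorem \ref{laurentphen}(2)), so that case is done. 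For $\varepsilon=-1$ the expression differs from the $\varepsilon=1$ one by $\sum_s b_{sk,t}g_{\cdot s,t}-\sum_s c_{sk,t}\mathbf{b}_{s,t_0}=(G_tB_t-B_{t_0}C_t)_{\cdot k}$, so one needs the identity $G_tB_t=B_{t_0}C_t$. I would obtain this from homogeneity: the numerator $y_{k,t}\prod_s x_{s,t}^{[b_{sk,t}]_+}+\prod_s x_{s,t}^{[-b_{sk,t}]_+}$ of the exchange relation at $t$ in direction $k$ equals $x_{k,t'}\,x_{k,t}\,(y_{k,t}\oplus 1)$, a product of elements homogeneous for the grading of Theorem \ref{laurentphen}(2) ($x_{k,t'},x_{k,t}$ by that theorem; $y_{k,t}\oplus 1=\prod_i y_i^{\min(c_{ik,t},0)}$ is a monomial), hence its two monomials have a common degree, and equating their degrees and substituting $\deg x_{s,t}=g_{s,t}$, $\deg y_i=-\mathbf{b}_{i,t_0}$, $\deg y_{k,t}=-\sum_i c_{ik,t}\mathbf{b}_{i,t_0}$ rearranges to precisely $(G_tB_t)_{\cdot k}=(B_{t_0}C_t)_{\cdot k}$.

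I expect the only point that needs any care is the identity $G_tB_t=B_{t_0}C_t$ used in the $\varepsilon=-1$ case of the $G$-matrix formula; everything else is routine repackaging. Notably the argument uses no sign coherence of $c$-vectors, which is consistent with the Proposition being asserted for all totally sign-skew-symmetric $B$. (Alternatively, once one invokes the column sign coherence of $C_t$, known here by \cite{LP}, the $\varepsilon=-1$ case can be checked directly: for one of the two signs the $k$-th column of $[-\varepsilon C_t]_+^{\cdot k}$ equals $[-\varepsilon c_{\cdot k,t}]$ with no truncation, and for the other it vanishes, and a short case distinction matches both to the scalar rule.)
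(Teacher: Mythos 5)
Your proof is correct, and there is nothing in the paper to compare it with line by line: Proposition \ref{gcmf} is quoted from \cite{fomin2007cluster,NZ} and given no proof in the text, so you have supplied the standard Fomin--Zelevinsky/Nakanishi--Zelevinsky derivation in full. Your bookkeeping is right: the $C$-formula reduces to the unconditional scalar identity $c[b]_++[-c]_+b=\mathrm{sign}(c)[cb]_+$, valid for both $\varepsilon=\pm 1$ with no sign coherence; and you correctly isolate that the $\varepsilon$-independence of the $G$-formula is exactly the column-$k$ instance of the first duality $G_tB_t=B_{t_0}C_t$, which you then prove by the homogeneity of the exchange relation for the $\mathbb{Z}^n$-grading of Theorem \ref{laurentphen}(2). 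That duality is precisely Proposition \ref{fdual}, stated in the paper right after \ref{gcmf} (with the grading proof attributed to Fomin--Zelevinsky), so you could simply invoke it; proving it via the grading, as you do, also avoids any circularity with Nakanishi's inductive proof of \ref{fdual}, which uses \ref{gcmf}. You were also right to read the last term of the $G$-formula as $B_{t_0}[-\varepsilon C_t]_+^{\cdot k}$ (the missing subscript $+$ in the statement is a typo) and to use the $k$-th row entries $b_{kj,t}$, consistent with the matrix form. One caveat on your parenthetical alternative: column sign coherence of $C_t$ kills the correction term when $\varepsilon=\varepsilon_k(C_t)$, but matching the opposite choice of $\varepsilon$ to the scalar recursion (which is the $\varepsilon=+1$ form) still requires $(G_tB_t)_{\cdot k}=(B_{t_0}C_t)_{\cdot k}$, so sign coherence does not substitute for the first duality; since that remark is only an aside, your main argument stands as written.
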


We shall notice that the so-called the {\em first duality} always holds for the totally sign-skew-symmetric matrix pattern.

\begin{Proposition}[\cite{fomin2007cluster}]\label{fdual}
	For each $t\in \mathbb{T}_n$, we have the following equality:
	\[G_tB_t = B_{t_0}C_t.\]
\end{Proposition}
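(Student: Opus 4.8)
The plan is to prove the identity $G_tB_t=B_{t_0}C_t$ by induction on the distance $d(t_0,t)$ from $t$ to $t_0$ in $\mathbb{T}_n$. The base case $t=t_0$ is immediate, since $G_{t_0}=C_{t_0}=I_n$ and both sides equal $B_{t_0}$. For the inductive step I fix an edge $t\frac{k}{\quad\,}t'$ with $d(t_0,t')=d(t_0,t)+1$ and assume $G_tB_t=B_{t_0}C_t$; the goal is to deduce $G_{t'}B_{t'}=B_{t_0}C_{t'}$.

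The preparatory point is to rewrite the exchange matrix mutation $B_{t'}=\mu_k(B_t)$ in a matrix form whose two factors match, respectively, the factor multiplying $G_t$ in the $G$-mutation and the factor multiplying $C_t$ in the $C$-mutation of Proposition \ref{gcmf}. Using $\mathrm{sign}(b)[bc]_+=[b]_+[c]_+-[-b]_+[-c]_+$, a direct entrywise check (the standard matrix form of $\mu_k$, cf. \cite{NZ}) gives, for either $\varepsilon\in\{1,-1\}$,
\[
B_{t'}=\bigl(J_k+[-\varepsilon B_t]_+^{\cdot k}\bigr)\,B_t\,\bigl(J_k+[\varepsilon B_t]_+^{k\cdot}\bigr)=:P\,B_t\,Q .
\]
Here $P=J_k+[-\varepsilon B_t]_+^{\cdot k}$ is exactly the factor appearing in the $G$-mutation and $Q=J_k+[\varepsilon B_t]_+^{k\cdot}$ is exactly the factor appearing in the $C$-mutation. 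Moreover $P^2=I_n$: because the diagonal of $B_t$ vanishes, $N:=[-\varepsilon B_t]_+^{\cdot k}$ is supported in its $k$-th column and has zero $k$-th row, so $J_kN=N$, $NJ_k=-N$, $N^2=0$, and hence $P^2=J_k^2+J_kN+NJ_k+N^2=I_n$.

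Now substitute. Writing $R:=[-\varepsilon C_t]_+^{\cdot k}$, Proposition \ref{gcmf} reads $G_{t'}=G_tP-B_{t_0}R$ and $C_{t'}=C_tQ+RB_t$. Using $P^2=I_n$ and $B_{t'}=PB_tQ$,
\[
G_{t'}B_{t'}=(G_tP-B_{t_0}R)(PB_tQ)=G_tB_tQ-B_{t_0}R\,B_{t'},
\]
whereas, by the inductive hypothesis $B_{t_0}C_t=G_tB_t$,
\[
B_{t_0}C_{t'}=B_{t_0}C_tQ+B_{t_0}RB_t=G_tB_tQ+B_{t_0}RB_t .
\]
Subtracting, $G_{t'}B_{t'}-B_{t_0}C_{t'}=-B_{t_0}R\,(B_{t'}+B_t)$. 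Since $R$ is supported in its $k$-th column, the product $RM$ depends only on the $k$-th row of $M$; and the $k$-th row of $B_{t'}+B_t$ vanishes because $\mu_k$ negates the $k$-th row of the exchange matrix. Hence $R(B_{t'}+B_t)=0$, which finishes the induction.

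I do not anticipate a genuine obstacle: the argument is formal once the bookkeeping is arranged. The only delicate step is the preparatory one — choosing the factorization $B_{t'}=PB_tQ$ so that $P$ is the left factor occurring in the $G$-mutation and $Q$ the right factor occurring in the $C$-mutation, together with the involution identity $P^2=I_n$. It is worth stressing that, unlike the "second" duality statements treated later in this section, this argument invokes neither the sign-coherence of $c$-vectors nor any feature special to the skew-symmetrizable case; it follows purely from the mutation rules, and so holds for every totally sign-skew-symmetric matrix pattern.
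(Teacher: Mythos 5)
Your proof is correct and follows essentially the route the paper itself invokes: the paper's proof of Proposition \ref{fdual} simply cites Nakanishi's argument by induction on $d(t_0,t)$ using the matrix-form recurrences of Proposition \ref{gcmf}, which is exactly what you carry out in detail (via the factorization $B_{t'}=PB_tQ$, $P^2=I_n$, and the vanishing of the $k$-th row of $B_{t'}+B_t$). Your closing remark that no sign-coherence is needed also matches the paper's observation that this ``first duality'' holds for every totally sign-skew-symmetric matrix pattern.
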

\begin{proof}
	This equality was proved by Fomin and Zelevinsky in \cite{fomin2007cluster} using the fact that $g$-vectors are the $\mathbb{Z}^n$-grading of cluster variables coming from cluster algebras with principal coefficients. Later, Nakanishi proved it using the Proposition \ref{gcmf} and induction on the distance between $t_0$ and $t$ on $\mathbb{T}_n$ in \cite{N}. Their proofs all hold for the totally sign-skew-symmetric case.
\end{proof}

In Li and Pan's latest paper \cite{LP}, they proved the following theorem which affirms the column sign-coherence for all $C$-matrices arising from totally sign-skew-symmetric matrices.

\begin{Theorem}[\cite{LP}]
	Let $B$ be a totally sign-skew-symmetric matrix and $\mathcal{A}(B)$ be the corresponding 
	cluster algebras with principal coefficients. Then every $F$-polynomial has a constant term $1$. Consequently, the $C$-matrices are column sign-coherent.
\end{Theorem}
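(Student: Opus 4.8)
The plan is to separate the statement into its two halves and to isolate the one that carries genuine content. The implication ``every $F$-polynomial has constant term $1$'' $\Rightarrow$ ``the $C$-matrices are column sign-coherent'' is the standard implication recorded in \cite{fomin2007cluster, NZ}, and in fact the two assertions are most naturally proved together by a single induction on the distance $d(t_0,t)$ in $\mathbb{T}_n$ over the pair of statements ``$C_t$ is column sign-coherent'' and ``every $F_{j,t}$ is a polynomial with constant term $1$''. So the task reduces to producing that simultaneous induction, with base case $t=t_0$ trivial since $F_{j,t_0}=1$ and $C_{t_0}=I_n$.

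When $B$ is mutation equivalent to an acyclic sign-skew-symmetric matrix, the induction is already available inside this paper and nothing new is required: by Theorem \ref{hlunfolding} such a $B$ has an unfolding $(Q,\Gamma)$ by a locally finite globally foldable quiver, and then Lemma \ref{hlinf}(5) together with Theorem \ref{anyseed} show that every $F$-polynomial of $\mathcal{A}(B)$ has constant term $1$ and hence that the $C$-matrices are column sign-coherent; equivalently one may apply Corollary \ref{printotal} to the framed quiver $\widetilde Q$.

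The substantive case is an arbitrary totally sign-skew-symmetric $B$, which need not be mutation equivalent to an acyclic matrix and for which no unfolding, skew-symmetrizer, or categorical model is at hand; this is exactly the content imported from \cite{LP}. The inductive step at an edge $t\frac{k}{\quad\,}t'$ runs off the $F$-polynomial recursion of Section \ref{sec2}, which rearranges to
\[
F_{k,t}\,F_{k,t'}=\prod_{j\in I} y_j^{c_{jk,t}}F_{j,t}^{[b_{jk,t}]_+}+\prod_{j\in I}y_j^{-c_{jk,t}}F_{j,t}^{[-b_{jk,t}]_+},
\]
together with Proposition \ref{gcmf} for the $C$-matrix side. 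Given the inductive hypotheses, the passage from this identity to ``$F_{k,t'}$ has constant term $1$ and $C_{t'}$ is column sign-coherent'' is a routine manipulation once one knows the right-hand side, after division by $F_{k,t}$, remains an honest polynomial whose two summands do not conspire to cancel the constant term. That last point is where all the difficulty lies: one must rule out an unexpected common factor, equivalently an unexpected negative $y$-exponent, in the exchange relation, and do so purely combinatorially, controlling the Newton polytope of $\prod_j F_{j,t}^{[\pm b_{jk,t}]_+}$ relative to the $c$-vector $c_{k,t}$. This ``no-cancellation'' phenomenon simultaneously yields positivity of the Laurent expansions, and establishing it without the representation-theoretic input of \cite{DWZ} or the scattering-diagram input of \cite{gross2018canonical} that is available in the skew-symmetrizable world is the main obstacle, and the heart of \cite{LP}.
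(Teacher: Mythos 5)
The paper itself gives no proof of this statement: it is quoted verbatim as an external input from \cite{LP}, so there is no internal argument to measure you against. Within that constraint your treatment is essentially the same as the paper's. Your two observations that can be checked against this paper are correct: the equivalence between ``all $F$-polynomials have constant term $1$'' and column sign-coherence of the $C$-matrices is the standard one from \cite{fomin2007cluster,NZ}, and the case of $B$ mutation equivalent to an acyclic matrix indeed already follows from the paper's own folding machinery (Theorem \ref{hlunfolding} plus Lemma \ref{hlinf}(5) and Theorem \ref{anyseed}), which is a slightly sharper remark than the paper makes. Just be clear about the status of the remaining case: for a general totally sign-skew-symmetric $B$ your text only locates the difficulty (the no-cancellation/constant-term control in the exchange recursion for $F_{k,t'}$) and attributes its resolution to \cite{LP}; it does not prove it. That is acceptable here only because the theorem is, in this paper too, an imported result rather than something proved in-house, so your proposal should be read as a correct reduction and contextualization, not as an independent proof of the hard step.
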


Since the column sign-coherence holds for $C$-matrices, we can use $\varepsilon_k(C)$ to denote the sign of the $k$-th column of the $C$-matrix $C$. Then we have the following result.

\begin{Proposition}\label{cgmm}
	Let $B= B_{t_0}$ be a totally sign-skew-symmetric matrix.  In the matrix pattern, for each 
	$t\frac{k}{\quad\,}t'$ in $\mathbb{T}_n$, we have that 
	\[C_{t'} = C_t(J_k + [\varepsilon B_t]_+^{k\cdot}) ,\]
	\[G_{t'} = G_t(J_k + [-\varepsilon B_t]_+^{\cdot k}),\]
	where $\varepsilon =  \varepsilon_k(C_t)$.
\end{Proposition}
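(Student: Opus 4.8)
The plan is to deduce Proposition~\ref{cgmm} from Proposition~\ref{gcmf} by showing that the two correction terms $[-\varepsilon C_t]_+^{\cdot k}B_t$ and $-B_{t_0}[-\varepsilon C_t]^{\cdot k}$ vanish once we take $\varepsilon = \varepsilon_k(C_t)$ to be the sign of the $k$-th column of $C_t$, which is legitimate because the column sign-coherence of $C$-matrices (the cited theorem of Li and Pan) guarantees that every $C$-matrix has a well-defined column sign. First I would examine the matrix $[-\varepsilon C_t]_+^{\cdot k}$: by definition it is obtained from $-\varepsilon C_t$ by zeroing out every column except the $k$-th and then replacing each entry by its positive part. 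But the $k$-th column of $C_t$ is $\varepsilon_k(C_t)$-sign-coherent, so every entry of the $k$-th column of $-\varepsilon C_t$ (with $\varepsilon = \varepsilon_k(C_t)$) is non-positive; taking positive parts kills all of them, and hence $[-\varepsilon C_t]_+^{\cdot k} = 0$.

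Given that, the correction term $[-\varepsilon C_t]_+^{\cdot k}B_t$ in the $C$-matrix formula of Proposition~\ref{gcmf} is zero, which immediately yields $C_{t'} = C_t(J_k + [\varepsilon B_t]_+^{k\cdot})$ with $\varepsilon = \varepsilon_k(C_t)$. For the $G$-matrix formula the correction term is $-B_{t_0}[-\varepsilon C_t]^{\cdot k}$, where $[-\varepsilon C_t]^{\cdot k}$ denotes the matrix obtained from $-\varepsilon C_t$ by zeroing all columns but the $k$-th (no positive part is taken here). So I need a separate argument: I would invoke the first duality $G_tB_t = B_{t_0}C_t$ (Proposition~\ref{fdual}). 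Writing $[-\varepsilon C_t]^{\cdot k} = -\varepsilon\, C_t^{\cdot k}$ and noting $B_{t_0}C_t^{\cdot k} = (B_{t_0}C_t)^{\cdot k} = (G_tB_t)^{\cdot k}$, the term becomes $\varepsilon (G_tB_t)^{\cdot k}$. Now $(G_tB_t)^{\cdot k}$ picks out the $k$-th column of $G_tB_t$, which is $G_t$ times the $k$-th column of $B_t$, and the factor $(J_k + [-\varepsilon B_t]_+^{\cdot k})$ in the main term of the $G$-formula also only modifies the $k$-th column; so I would verify directly, column by column, that the $k$-th column of $G_t(J_k + [-\varepsilon B_t]_+^{\cdot k}) - B_{t_0}[-\varepsilon C_t]^{\cdot k}$ equals the $k$-th column of $G_t(J_k + [-\varepsilon B_t]_+^{\cdot k})$, i.e.\ that the subtracted term is zero on column $k$ as well. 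Concretely, column $k$ of $-B_{t_0}[-\varepsilon C_t]^{\cdot k}$ is $\varepsilon\, B_{t_0} c_{k,t}$ where $c_{k,t}$ is the $k$-th $c$-vector; by the first duality this is $\varepsilon\, G_t (B_t \text{'s }k\text{-th column})$, and combining with $J_k$ contributing $-g_{k,t}$ and $[-\varepsilon B_t]_+^{\cdot k}$ contributing the appropriate positive parts, the algebra collapses exactly as in Nakanishi--Zelevinsky's skew-symmetrizable computation, since the only input there was the first duality together with column sign-coherence of $C_t$, both of which we now have.

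The main obstacle, and the only place requiring genuine care, is the $G$-matrix correction term: unlike the $C$-matrix term it is not literally zero as a standalone matrix, and one must use the first duality to see that it is absorbed into the $J_k$-and-positive-part part of the formula. I expect the cleanest route is to not separate the term at all but rather to rewrite Proposition~\ref{gcmf}'s $G$-formula using $G_tB_t = B_{t_0}C_t$ before specializing $\varepsilon$, reducing everything to the sign-coherence fact $[-\varepsilon_k(C_t) C_t]_+^{\cdot k} = 0$ applied once. Everything else is routine bookkeeping with the $[\,\cdot\,]_+^{k\cdot}$, $[\,\cdot\,]_+^{\cdot k}$ and $J_k$ operations, so I would state it concisely rather than expand it in full.
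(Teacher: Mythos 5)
Your handling of the $C$-matrix identity is exactly the paper's argument: with $\varepsilon=\varepsilon_k(C_t)$, column sign-coherence gives $[-\varepsilon C_t]_+^{\cdot k}=0$, so the correction term in Proposition \ref{gcmf} disappears. The problem is your $G$-matrix argument. You take the printed correction term $-B_{t_0}[-\varepsilon C_t]^{\cdot k}$ at face value (explicitly ``no positive part is taken here'') and then claim that, after invoking the first duality, its $k$-th column vanishes. It does not: the $k$-th column of $-B_{t_0}[-\varepsilon C_t]^{\cdot k}$ is $\varepsilon B_{t_0}c_{k,t}$, which by Proposition \ref{fdual} equals $\varepsilon G_t b_{k,t}$ with $b_{k,t}$ the $k$-th column of $B_t$; since $G_t$ is invertible this is nonzero whenever $b_{k,t}\neq 0$. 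If you actually push the ``absorption'' through, the entrywise identity $[-\varepsilon b]_+ + \varepsilon b=[\varepsilon b]_+$ turns the no-positive-part formula into $G_{t'}=G_t(J_k+[\varepsilon B_t]_+^{\cdot k})$, i.e.\ the \emph{opposite} sign of $\varepsilon$ from what Proposition \ref{cgmm} asserts, so the computation cannot ``collapse'' to the stated formula. A rank-$2$ check confirms the issue: for $B_{t_0}=\left(\begin{smallmatrix}0&1\\-1&0\end{smallmatrix}\right)$ and $t_0\frac{1}{\;\;}t_1$, the true $G$-matrix is $\left(\begin{smallmatrix}-1&0\\1&1\end{smallmatrix}\right)$ (as Proposition \ref{cgmm} predicts), whereas the formula with the unclipped term gives $J_1$.

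The resolution is that the $G$-formula in Proposition \ref{gcmf} carries a typo: consistent with the scalar $g$-vector mutation rule recalled in Section \ref{sec2} (the term involving $[-c_{sk,t}]_+$) and with Nakanishi--Zelevinsky, the correction term should be $-B_{t_0}[-\varepsilon C_t]_+^{\cdot k}$, with the positive part. Once that is restored, no duality argument is needed at all: the single fact $[-\varepsilon_k(C_t)\,C_t]_+^{\cdot k}=0$ kills both correction terms simultaneously, which is precisely the paper's one-line proof. (The first duality $G_tB_t=B_{t_0}C_t$ is only what lets one pass between the $\varepsilon=+1$ and $\varepsilon=-1$ forms of Proposition \ref{gcmf}; it is not the mechanism by which Proposition \ref{cgmm} follows.) So your $C$-part is correct and identical to the paper, but the $G$-part as written relies on a false vanishing and needs to be replaced by the corrected-typo argument above.
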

\begin{proof}
	By  the sign-coherence  of the $C$-matrix, we know that $[-\varepsilon(C_t) C_t]_+^{\cdot k} = 0$. Then this proposition is an easy corollary of the Proposition \ref{gcmf}.
\end{proof}

Now we can see that Conjecture \ref{fzconj} (ii) always holds for totally sign-skew-symmetric matrix pattern.

\begin{Theorem}\label{unim}
	For each $t\in \mathbb{T}_n$,  we have that
	\begin{enumerate}
		\item[(i)] the $g$-vectors $g_{1,t}, \dots, g_{n,t}$ form a $\mathbb{Z}$-basis of the lattice $\mathbb{Z}^n$,
		\item[(ii)]  the $c$-vectors $c_{1,t}, \dots, c_{n,t}$ form a $\mathbb{Z}$-basis of the lattice $\mathbb{Z}^n$.
	\end{enumerate}
	Moreover, we have that $|G_t| = |C_t| \in \{1, -1\}$.
\end{Theorem}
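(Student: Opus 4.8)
The plan is to prove the three assertions in Theorem \ref{unim} by induction on the distance $d(t_0,t)$ in $\mathbb{T}_n$, using the matrix mutation formulas recorded in Proposition \ref{cgmm} together with the first duality of Proposition \ref{fdual}. The base case $t=t_0$ is immediate, since $C_{t_0} = G_{t_0} = I_n$, so all three claims hold and $|G_{t_0}| = |C_{t_0}| = 1$.

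For the inductive step, suppose $t \frac{k}{\quad}t'$ with $d(t_0,t') = d(t_0,t)+1$, and write $\varepsilon = \varepsilon_k(C_t)$. By Proposition \ref{cgmm} we have $C_{t'} = C_t(J_k + [\varepsilon B_t]_+^{k\cdot})$ and $G_{t'} = G_t(J_k + [-\varepsilon B_t]_+^{\cdot k})$. The key observation is that both $J_k + [\varepsilon B_t]_+^{k\cdot}$ and $J_k + [-\varepsilon B_t]_+^{\cdot k}$ are integer matrices with determinant $-1$: the first differs from $J_k$ only in the $k$-th row (adding a row vector supported off the diagonal in that row does not change the determinant, which is that of $J_k$, namely $-1$), and the second differs from $J_k$ only in the $k$-th column by the analogous argument. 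Hence both are invertible over $\mathbb{Z}$, so if the columns of $G_t$ (resp.\ $C_t$) form a $\mathbb{Z}$-basis of $\mathbb{Z}^n$, so do the columns of $G_{t'}$ (resp.\ $C_{t'}$); this proves (i) and (ii) at $t'$. Moreover $|G_{t'}| = -|G_t|$ and $|C_{t'}| = -|C_t|$, so by induction $|G_{t'}|, |C_{t'}| \in \{1,-1\}$.

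It remains to show $|G_t| = |C_t|$ for every $t$. One route is to note that both determinants flip sign under every single mutation, so $|G_t||C_t|$ is mutation-invariant and hence equals its value $|G_{t_0}||C_{t_0}| = 1$ at the initial seed; combined with $|G_t|, |C_t| \in \{1,-1\}$ this forces $|G_t| = |C_t|$. Alternatively, one can invoke Proposition \ref{fdual}: $G_t B_t = B_{t_0} C_t$ exhibits a direct relation, though extracting the determinant identity from it requires care when $B$ is not invertible, so I would prefer the mutation-invariance argument, which is clean and uses nothing beyond what has already been established.

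The step I expect to require the most attention is the determinant computation for the elementary factors $J_k + [\varepsilon B_t]_+^{k\cdot}$ and $J_k + [-\varepsilon B_t]_+^{\cdot k}$: one must check that the added entries really are confined to the $k$-th row (resp.\ column) and \emph{off} the diagonal there — this uses that the diagonal of $B_t$ vanishes, so $[\varepsilon B_t]_+^{k\cdot}$ has zero $(k,k)$-entry — so that expanding along the $k$-th column (resp.\ row) reduces the determinant to that of $J_k$. Everything else is a routine propagation of the $\mathbb{Z}$-basis property through multiplication by a matrix in $\mathrm{GL}_n(\mathbb{Z})$.
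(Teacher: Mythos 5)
Your proposal is correct and follows essentially the same route as the paper: induction on $d(t_0,t)$ using Proposition \ref{cgmm}, with the key point that each elementary factor $J_k + [\varepsilon B_t]_+^{k\cdot}$ and $J_k + [-\varepsilon B_t]_+^{\cdot k}$ lies in $\mathrm{GL}_n(\mathbb{Z})$ with determinant $-1$, so both $|G_t|$ and $|C_t|$ flip sign at each step and remain equal. Your explicit justification of the determinant $-1$ (using $b_{kk}=0$ and expansion along the $k$-th column resp.\ row) simply spells out what the paper asserts without comment, and your ``mutation-invariance of $|G_t||C_t|$'' phrasing is just a repackaging of the paper's inductive hypothesis $|G_t|=|C_t|\in\{1,-1\}$.
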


\begin{proof}
	Let us prove $|G_t| = |C_t| \in \{1, -1\}$ by induction on the distance $d(t_0,t)$ between $t_0$ and $t$ on $\mathbb{T}_n$.
	
	For $d(t_0, t) = 0$, then $t = t_0$, we have that $G_t = I_n = C_t$ and thus $|G_t| = |C_t| =1$.  Assume that it holds for $d(t_0, t) =d$  and let  $t\frac{k}{\quad\,}t'$ be an edge on $\mathbb{T}_n$. Then by Proposition \ref{cgmm}, we have that 
	\[C_{t'} = C_t(J_k + [\varepsilon B_t]_+^{k\cdot}) ,\]
	\[G_{t'} = G_t(J_k + [-\varepsilon B_t]_+^{\cdot k}),\]
	where $\varepsilon =  \varepsilon_k(C_t)$.
	Note that \[(J_k + [\varepsilon B_t]_+^{k\cdot})^2= I_n= (J_k + [-\varepsilon B_t]_+^{\cdot k})^2,\] and \[|J_k + [\varepsilon B_t]_+^{k\cdot}| = -1 = |J_k + [-\varepsilon B_t]_+^{\cdot k}|,\]and  $|G_t| = |C_t| \in \{1, -1\}$. We have that  $G_{t'}$ and $C_{t'}$ are invertible in $M_n(\mathbb{Z})$, and 
	$|G_{t'}| = |C_{t'}| \in \{1, -1\}$.
\end{proof}

\subsection{Sign-coherence  and tropical dualities}\label{sec32}
Let $B$ be a totally sign-skew-symmetric matrix and $(B_t, C_t, G_t)_{t\in \mathbb{T}_n}$ be the corresponding matrix pattern  with the initial vertex  $t_0$. It is clear that $B^T$ is also totally sign-skew-symmetric. The corresponding $G$-matrices and $C$-matrices for $B^T$ are denoted  by $\bar{G}$ and $\bar{C}$ respectively.
Note that $\tilde{B} = -B^T$ is also a totally sign-skew-symmetric matrix.  Let $(\tilde{B}_t, \tilde{C}_t, \tilde{G}_t)_{t\in \mathbb{T}_n}$ denote the matrix pattern corresponding to the matrix $\tilde{B} = -B^T$ with the initial vertex  $t_0$. It is easy to see that $\tilde{B}_t = -B_t^T$ for all $t\in \mathbb{T}_n$. 

{\bf Assumption} for $B$:  The signs of the columns of $C$-matrices $C$ and $\tilde{C}$ are always the same for  arbitrary initial vertex for the cluster  algebra $\mathcal{A}(B)$ and for the cluster  algebra $\mathcal{A}(-B)$.

Note that Conjecture \ref{conl} implies the {\bf Assumption}, thus {\bf Assumption} holds for acyclic sign-skew-symmetric matrices.

\begin{Proposition}\label{sdual}
	Suppose the {\bf Assumption} holds. 
	For any  $t_0, t \in \mathbb{T}_n$, we have that 
	\[(G_t^{t_0})^T\tilde{C}_{t}^{t_0} = I_n.\]
	
\end{Proposition}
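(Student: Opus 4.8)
The plan is to prove the identity $(G_t^{t_0})^T\tilde C_t^{t_0}=I_n$ by induction on the distance $d(t_0,t)$ in $\mathbb{T}_n$, following the strategy of Nakanishi--Zelevinsky but being careful about which sign-coherence facts are available. The base case $t=t_0$ is immediate since $G_{t_0}=C_{t_0}=\tilde C_{t_0}=I_n$. For the inductive step, suppose the identity holds at $t$ and let $t\frac{k}{\quad}t'$ be an edge. By Proposition \ref{cgmm} applied to the matrix pattern of $B$, we have $G_{t'}=G_t(J_k+[-\varepsilon B_t]_+^{\cdot k})$ with $\varepsilon=\varepsilon_k(C_t)$; and applying the same proposition to the matrix pattern of $\tilde B=-B^T$, we get $\tilde C_{t'}=\tilde C_t(J_k+[\tilde\varepsilon\tilde B_t]_+^{k\cdot})$ with $\tilde\varepsilon=\varepsilon_k(\tilde C_t)$. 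Here is exactly where the \textbf{Assumption} enters: it guarantees $\varepsilon_k(C_t)=\varepsilon_k(\tilde C_t)$, so we may write both factors with the same sign $\varepsilon$. Since $\tilde B_t=-B_t^T$, the factor in the $\tilde C$-mutation becomes $J_k+[-\varepsilon B_t^T]_+^{k\cdot}=\bigl(J_k+[-\varepsilon B_t]_+^{\cdot k}\bigr)^T$.

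Now I would simply compute
\[
(G_{t'})^T\tilde C_{t'}=\bigl(J_k+[-\varepsilon B_t]_+^{\cdot k}\bigr)^T(G_t)^T\,\tilde C_t\,\bigl(J_k+[-\varepsilon B_t]_+^{\cdot k}\bigr)^T.
\]
Wait---I need to be careful with the order: $(G_{t'})^T = (J_k+[-\varepsilon B_t]_+^{\cdot k})^T (G_t)^T$, and the $\tilde C_{t'}$ factor sits on the right as $\tilde C_t (J_k+[-\varepsilon B_t]_+^{\cdot k})^T$. Using the induction hypothesis $(G_t)^T\tilde C_t=I_n$, the middle collapses and we are left with $\bigl(J_k+[-\varepsilon B_t]_+^{\cdot k}\bigr)^T\bigl(J_k+[-\varepsilon B_t]_+^{\cdot k}\bigr)^T$. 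By the observation recorded in the proof of Theorem \ref{unim} that $(J_k+[-\varepsilon B_t]_+^{\cdot k})^2=I_n$, this product is $\bigl((J_k+[-\varepsilon B_t]_+^{\cdot k})^2\bigr)^T=I_n$, completing the induction.

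The main obstacle is conceptual rather than computational: making sure the sign used in the mutation of $\tilde C$ genuinely matches the sign used in the mutation of $G$. The sign in the $G$-mutation (Proposition \ref{cgmm}) is $\varepsilon_k(C_t)$, coming from the $C$-matrix of $B$; the sign that the matrix-pattern mutation rule naturally produces for $\tilde C_{t'}$ is $\varepsilon_k(\tilde C_t)$, coming from the $C$-matrix of $\tilde B=-B^T$. These two need not be equal a priori, and the \textbf{Assumption} is precisely the hypothesis that closes this gap (note also that $\mathcal{A}(-B)$ and $\mathcal{A}(-B^T)=\mathcal{A}(\tilde B)$ are related by transposition, so the Assumption's reference to $\mathcal{A}(-B)$ suffices). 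Once the signs are aligned, everything reduces to the elementary identities $(J_k+[\pm\varepsilon B_t]_+^{\ast})^2=I_n$ and the transpose relation $[-\varepsilon B_t^T]_+^{k\cdot}=\bigl([-\varepsilon B_t]_+^{\cdot k}\bigr)^T$, both of which are routine. A secondary point to verify carefully is that $\tilde G_t$ plays no role here---only $\tilde C_t$ appears---so we do not need any statement comparing $G_t$ with $\tilde G_t$; the cross-pairing between the $B$-pattern's $G$-matrices and the $\tilde B$-pattern's $C$-matrices is exactly what the mutation rules dovetail into.
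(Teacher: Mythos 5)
Your proposal is correct and follows essentially the same route as the paper: induction on $d(t_0,t)$, mutation of $G$ and $\tilde C$ via Proposition \ref{cgmm}, the \textbf{Assumption} to identify $\varepsilon_k(C_t)=\varepsilon_k(\tilde C_t)$, and the involution identity $(J_k+[-\varepsilon B_t]_+^{\cdot k})^2=I_n$ to collapse the product after applying the induction hypothesis. The only difference is presentational (you spell out the transpose identity $[-\varepsilon B_t^T]_+^{k\cdot}=([-\varepsilon B_t]_+^{\cdot k})^T$ explicitly), which matches the paper's computation.
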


\begin{proof} Let us prove it by induction on the distance $d(t_0, t)$ of $t_0$ and $t$ on $\mathbb{T}_n$.
	
	It is obvious when $d(t_0, t) = 0$, since in this case $G_t^{t_0} =\tilde{C}_{t}^{t_0} = I_n$
	Suppose that $d(t_0,t) =s$. Let us consider the following sequence:
	\[t_0 \frac{}{\quad\quad\quad\quad\quad\quad}t\frac{k}{\quad\,}t'\]
	By the Proposition \ref{cgmm}, we have 
	\[(G_{t'})^T = (G_t(J_k + [-\varepsilon_k(C_t) B_t]_+^{\cdot k}))^T = (J_k + [-\varepsilon_k(C_t) (B_t)^T]_+^{k\cdot})G_t^T,\]
	\[\tilde{C}_{t'} = \tilde{C}_t (J_k + [-\varepsilon_k(\tilde{C}_t) (B_t)^T]_+^{k\cdot}).\]
	Then \[\begin{split}  (G_{t'})^T\tilde{C}_{t'} &= (J_k + [-\varepsilon_k(C_t) (B_t)^T]_+^{k\cdot})G_t^T\tilde{C}_t (J_k + [-\varepsilon_k(\tilde{C}_t) (B_t)^T]_+^{k\cdot})\\&=  (J_k + [-\varepsilon_k(C_t) (B_t)^T]_+^{k\cdot})(J_k + [-\varepsilon_k(\tilde{C}_t) (B_t)^T]_+^{k\cdot})\\ &= I_n. \end{split}\]
	Here the third equality holds because the {\bf Assumption} ensures that $\varepsilon_k(C_t) = \varepsilon_k(\tilde C_t).$ Then we finish the proof.
\end{proof}

The row sign-coherence for $G$-matrices was proved for the skew-symmetric case by Derksen, Weyman and Zelevinsky in \cite{DWZ}, for the skew-symmetrizable case by Gross, Hacking, Keel and Kontsevich in \cite{gross2018canonical}, and for the  acyclic sign-skew-symmetric case in \cite{CHL2}.
Next, we will prove the row sign-coherence for $G$-matrices under the {\bf Assumption.} The idea of the following proof  comes from the proofs in \cite{NZ} with a little modification.
\begin{Proposition}\label{dualmut}
	Suppose the {\bf Assumption} holds. Then we have the following results:
	
	\begin{enumerate}
		\item[(a)]  For any $t_0, t \in \mathbb{T}_n$, we have
		\begin{equation}
			C_t^{t_0} = (\bar{G}_{t_0}^t)^T,
		\end{equation}
		\begin{equation}
			G_t^{t_0} = (\bar{C}_{t_0}^t)^T.
		\end{equation}
		This implies that the row sign-coherence holds for $G$-matrices, and we use $\varepsilon_k(G)$ to denote the sign of the $k$-th row of the $G$-matrix $G$. 
		\item[(b)] For any $t_0, t_1, t\in \mathbb{T}_n$ such that $t_0 \frac{k}{\quad\,}t_1$ are two adjacent vertices on $\mathbb{T}_n$, we have that 
		\begin{equation}
			C_t^{t_1} = (J_k + [-\varepsilon_k(G^{t_0}_t)B_{t_0}]^{k\cdot}_+)C^{t_0}_t,
		\end{equation}
		\begin{equation}
			G_t^{t_1} = (J_k + [\varepsilon_k(G^{t_0}_t)B_{t_0}]^{\cdot k}_+)G^{t_0}_t.
		\end{equation}
		Here $\varepsilon_k(G^{t_0}_t)$ is well-defined when (a) holds.
		\item[(c)] For any $t_0, t \in \mathbb{T}_n$ and $i, j\in [1,n]$, we have that the $i$-th column of $C_t^{t_0}$ is $\pm e_j$ if and only if the the $i$-th column of $\tilde{C}_t^{t_0}$ is $\pm e_j$.
		\item[(d)] For any $t_0, t \in \mathbb{T}_n$ and $i, j\in [1,n]$, 
		we have that the signs of the $i$-th rows of $G_t^{t_0}$ and $G_t^{t_0}$ are the same,
		and the $i$-th row of $G_t^{t_0}$ is $\pm e_j$ if and only if the the $i$-th row of $\tilde{G}_t^{t_0}$ is $\pm e_j$.
		
	\end{enumerate}
	
\end{Proposition}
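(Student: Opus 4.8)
The plan is to prove the four statements essentially simultaneously by induction on the distance $d(t_0,t)$, following the pattern of Nakanishi and Zelevinsky \cite{NZ} but using Proposition \ref{sdual} and the \textbf{Assumption} as substitutes for the skew-symmetrizable-specific arguments. The base case $t=t_0$ is immediate in all parts, since all matrices involved are $I_n$. For the inductive step I would fix an edge $t \frac{k}{\quad}t'$ with $d(t_0,t') = d(t_0,t)+1$ and propagate each identity across it.

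First I would establish (a). The key is to combine Proposition \ref{sdual}, which gives $(G_t^{t_0})^T \tilde C_t^{t_0} = I_n$, with Theorem \ref{unim}, which gives that $G_t^{t_0}$ and $\tilde C_t^{t_0}$ are unimodular; this forces $\tilde C_t^{t_0} = ((G_t^{t_0})^T)^{-1} = ((G_t^{t_0})^{-1})^T$. Since $\tilde B = -B^T$, the matrix pattern for $\tilde B$ is related to the pattern for $B^T$ by an overall sign on columns, so $\tilde C$ is controlled by $\bar C$ (the $C$-matrices for $B^T$); tracking the signs carefully turns the relation $\tilde C_t^{t_0} = ((G_t^{t_0})^{-1})^T$ into the two displayed identities $C_t^{t_0} = (\bar G_{t_0}^t)^T$ and $G_t^{t_0} = (\bar C_{t_0}^t)^T$, where the swap of $t_0$ and $t$ reflects that inverting a $G$-matrix amounts to changing the base point. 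Row sign-coherence of $G_t^{t_0}$ is then immediate: its rows are (up to sign) the columns of the column-sign-coherent matrix $\bar C_{t_0}^t$. With (a) in hand, $\varepsilon_k(G^{t_0}_t)$ is well-defined, and (b) follows by transposing the mutation formulas in Proposition \ref{cgmm} applied to the pattern for $B^T$ and using $\varepsilon_k(\bar C) = \varepsilon_k(G)$ via the identity just proved; here one again uses the \textbf{Assumption} to identify the relevant column signs. For (c), the statement that column $i$ of $C_t^{t_0}$ is $\pm e_j$ iff the same holds for $\tilde C_t^{t_0}$ is forced by $\tilde C_t^{t_0} = ((G_t^{t_0})^{-1})^T$ together with (a) — both being unimodular, a standard basis column in one corresponds to a standard basis row in the inverse transpose, and (a) relates that back to a column of the other $C$-matrix. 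Finally (d) is the translation of (c) through the identifications in (a): rows of $G$-matrices correspond to columns of $C$-matrices for the transposed exchange matrix, so the sign agreement and the $\pm e_j$ statement for rows of $G_t^{t_0}$ and $\tilde G_t^{t_0}$ follow from the already-established column statements.

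The main obstacle I anticipate is the bookkeeping in part (a): correctly matching the initial and final vertices (the $t_0 \leftrightarrow t$ swap) and correctly tracking how the overall column sign relating the $\tilde B$-pattern to the $B^T$-pattern propagates under mutation, so that $\tilde C$ and $\bar C$ (respectively $\tilde G$ and $\bar G$) are linked without sign errors. This is where the \textbf{Assumption} does real work — it is precisely what guarantees $\varepsilon_k(C_t) = \varepsilon_k(\tilde C_t)$ at every step, which is needed both to invoke Proposition \ref{sdual} in the inductive step and to make the mutation formulas of part (b) consistent. Once the dictionary of part (a) is set up cleanly, parts (b), (c), (d) are largely formal consequences obtained by transposing Proposition \ref{cgmm} and Proposition \ref{sdual} and reading off sign-coherence, so I would spend most of the care on (a) and treat the rest as routine verifications analogous to those in \cite{NZ,N}.
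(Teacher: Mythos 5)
There is a genuine gap, and it sits exactly where the real work of the paper is done. Your derivation of (a) is essentially circular: from Proposition \ref{sdual} and Theorem \ref{unim} you only get $\tilde{C}_t^{t_0}=\bigl((G_t^{t_0})^T\bigr)^{-1}$, i.e.\ a relation between two matrices both based at $t_0$ and evaluated at $t$. The content of (a) is the base-point swap $t_0\leftrightarrow t$ (that this inverse-transpose coincides with $\bar G_{t_0}^t$, resp.\ $\bar C_{t_0}^t$, computed in the $B^T$-pattern based at $t$), and your justification --- ``inverting a $G$-matrix amounts to changing the base point'' --- is precisely the duality being proved, not an available fact; nothing before Proposition \ref{dualmut} supplies it. Moreover your bridge between the $\tilde B=-B^T$ pattern and the $B^T$ pattern (``related by an overall sign on columns'') is false: already after one mutation, $\tilde C_{t_1}^{t_0}=J_k+[-B^T]_+^{k\cdot}$ while $\bar C_{t_1}^{t_0}=J_k+[B^T]_+^{k\cdot}$, and these are not column-sign rescalings of each other (that the patterns for $B$ and $-B$ are \emph{not} trivially related is exactly why the {\bf Assumption} is a nontrivial hypothesis). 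The paper instead proves (a) at distance $s+1$ by induction, using $(b)_s$ (the base-vertex change formula) and transposing Proposition \ref{cgmm}.

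The same gap propagates to (b). If (a) were known for \emph{all} pairs of vertices, then (b) would indeed be the transpose of Proposition \ref{cgmm} for the $B^T$-pattern, as you say; but inside the induction the pair $(t_1,t)$ has distance one larger than what $(a)$ provides, so (b) cannot be obtained formally. This is why the paper's proof of $(b)_{s+1}$ contains the delicate case analysis you omit: comparing $\varepsilon_k(G_{t'}^{t_0})$ with $\varepsilon_k(G_t^{t_0})$ and $\varepsilon_l(C_t^{t_1})$ with $\varepsilon_l(C_t^{t_0})$, ruling out the incompatible case via $(c)_s$, Proposition \ref{sdual} and the {\bf Assumption}, and in the sign-flip case reducing the required matrix identity to $(B_{t_0}C_t^{t_0})^{k\cdot}=C_t^{t_0}(B_t)^{l\cdot}$, which is settled by the first duality $G_tB_t=B_{t_0}C_t$ (Proposition \ref{fdual}) together with $(C_t^{t_0})^{\cdot l}=\varepsilon E_{kl}=(G_t^{t_0})^{k\cdot}$. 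Your treatment of (c) also quietly needs part of (d) (relating standard-basis rows of $G$ and $\tilde G$), whereas the paper derives $(c)_{s+1}$ from $(b)_{s+1}$ plus the {\bf Assumption} and only then gets $(d)_{s+1}$. So while your overall framing (simultaneous induction in the spirit of \cite{NZ}, with Proposition \ref{sdual} and the {\bf Assumption} replacing skew-symmetrizability) matches the paper, the two steps you declare routine --- the base-point swap in (a) and the sign-compatibility analysis in (b) --- are exactly the non-formal core, and as written your argument does not establish them.
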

\begin{proof}
	We will prove these statements by induction on the distance $d(t_0, t)$ of $t_0$ and $t$ on $\mathbb{T}_n$.
	
	Let us write the following claims:
	\begin{enumerate}
		\item[$(a)_s$.]  The claim $(a)$ holds for any $t_0, t \in \mathbb{T}_n$ such that $d(t_0, t) \leq s$.
		\item[$(b)_s$.]  The claim $(b)$ holds for any $t_0, t_1, t \in \mathbb{T}_n$ such that $d(t_0, t) \leq s$, and $t_1\frac{k}{\quad\,}t_0$ are two adjacent vertices on $\mathbb{T}$ labeled by any $k\in [1,n]$.
		\item[$(c)_s$.]  The claim $(c)$ holds for any $t_0, t \in \mathbb{T}_n$ such that $d(t_0, t) \leq s$.
		\item[$(d)_s$.]  The claim $(d)$ holds for any $t_0, t \in \mathbb{T}_n$ such that $d(t_0, t) \leq s$.
	\end{enumerate}
	
	Let us prove these claims in the following order. 
	
	\begin{equation}\label{abcd}
		(a)_0 \implies (b)_0 \implies (c)_0 \implies (d)_0 \implies (a)_1 \implies (b)_1 \implies \dots
	\end{equation}
	
	Firstly, we show  that  $(a)_0, (b)_0, (c)_0$ and $(d)_0$ hold. Since $s = 0$ implies that $t = t_0$,  thus we have that \[C_t^{t_0} = \bar{G}_{t_0}^t = G_t^{t_0} = \bar{C}_{t_0}^t =\tilde{C}_{t}^{t_0} = \tilde{G}_{t}^{t_0}  = I_n,\] 
	and therefore $(a)_0, (c)_0, (d)_0$ hold naturally. For $(b)_0$, we need to show that 		
	\[C_{t_0}^{t_1} = J_k + [-B_{t_0}]^{k\cdot}_+,\]
	\[G_{t_0}^{t_1} = J_k + [B_{t_0}]^{\cdot k}_+.\]
	By the Proposition \ref{cgmm}, we have that 
	\[C_{t_0}^{t_1} = C_{t_1}^{t_1} (J_k + [B_{t_1}]_+^{k\cdot})= J_k + [-B_{t_0}]^{k\cdot}_+, \]
	\[G_{t_0}^{t_1} =  G_{t_1}^{t_1} (J_k + [-B_{t_1}]_+^{\cdot k})  =J_k + [B_{t_0}]^{\cdot k}_+.\]
	Thus $(b)_0$ holds.
	
	Assume that  the claims in \ref{abcd} up to $(d)_s$, then we show $(a)_{s+1}$.
	Suppose that $d(t_0, t) =s$. Let us consider the following sequence:
	\[t_1\frac{k}{\quad\,}t_0 \frac{}{\quad\quad\quad\quad\quad\quad}t\]
	We need to prove that 
	\[	C_t^{t_1} = (\bar{G}_{t_1}^t)^T,\quad\quad
	G_t^{t_1} = (\bar{C}_{t_1}^t)^T.\]
	By $(a)_s$ and $(b)_s$, we have that 
	\[C_t^{t_0} = (\bar{G}_{t_0}^t)^T,\quad\quad
	G_t^{t_0} = (\bar{C}_{t_0}^t)^T,\]
	\[C_t^{t_1} = (J_k + [-\varepsilon_k(G^{t_0}_t)B_{t_0}]^{k\cdot}_+)C^{t_0}_t,\]
	\[G_t^{t_1} = (J_k + [\varepsilon_k(G^{t_0}_t)B_{t_0}]^{k\cdot}_+)G^{t_0}_t.\]
	Note that $\varepsilon_k(G^{t_0}_t) = \varepsilon(C^{t}_{t_0})$. Then take the transpose, we have 
	\[\begin{split}  (C_t^{t_1})^T &= (C^{t_0}_t)^T(J_k + [-\varepsilon_k(G^{t_0}_t)B_{t_0}^T]^{\cdot k}_+)\\ &= \bar{G}_{t_0}^t (J_k + [-\varepsilon_k(C^{t}_{t_0})B_{t_0}^T]^{\cdot k}_+)\\ & =
		\bar{G}_{t_1}^t.
	\end{split}	
	\]
	The third equality follows from the Proposition \ref{cgmm}. And similarly, we have that
	\[\begin{split}  (G_t^{t_1})^T &= (G^{t_0}_t)^T(J_k + [\varepsilon_k(G^{t_0}_t)B_{t_0}^T]^{k\cdot}_+)\\ &= \bar{C}_{t_0}^t (J_k + [\varepsilon_k(C^{t}_{t_0})B_{t_0}^T]^{k\cdot }_+)\\ & =
		\bar{C}_{t_1}^t.
	\end{split}	
	\]
	Hence $(a)_{s+1}$ holds.
	
	Now assume that  the claims in \ref{abcd} up to $(a)_{s+1}$, then we show $(b)_{s+1}$.
	Assume that $d(t_0, t) =s$. Let us consider the following sequence:
	\[t_1\frac{k}{\quad\,}t_0 \frac{}{\quad\quad\quad\quad\quad\quad}t\frac{l}{\quad\,}t'\]
	We need to prove that 
	\begin{equation}\label{bs1}
		C_{t'}^{t_1} = (J_k + [-\varepsilon_k(G^{t_0}_{t'})B_{t_0}]^{k\cdot}_+)C^{t_0}_{t'},
	\end{equation}
	\begin{equation}\label{bs2}
		G_{t'}^{t_1} = (J_k + [\varepsilon_k(G^{t_0}_{t'})B_{t_0}]^{\cdot k}_+)G^{t_0}_{t'}.
	\end{equation}
	To prove the equality \ref{bs1}, we need to prove that 
	\[		C_{t'}^{t_1} = (J_k + [-\varepsilon_k(G^{t_0}_{t'})B_{t_0}]^{k\cdot}_+)C^{t_0}_{t} (J_l+ [\varepsilon_l(C_t^{t_0}B_t)]_+^{l\cdot}).
	\]
	Note that we have that 
	\[\begin{split}C_{t'}^{t_1} &= C_{t}^{t_1}(J_l+ [\varepsilon_l(C^{t_1}_t)B_t]^{l\cdot}_+) \\& = 
		(J_k+[-\varepsilon_k(G_{t}^{t_0})B_{t_0}]_+^{k\cdot}) C_{t}^{t_0}(J_l+ [\varepsilon_l(C^{t_1}_t)B_t]^{l\cdot}_+) .\end{split}\] 
	The second equality follows from $(b)_s$. So it is enough to prove that 
	\[\begin{split} &(J_k + [-\varepsilon_k(G^{t_0}_{t'})B_{t_0}]^{k\cdot}_+)C^{t_0}_{t} (J_l+ [\varepsilon_l(C_t^{t_0}B_t)]_+^{l\cdot}) \\=&(J_k+[-\varepsilon_k(G_{t}^{t_0})B_{t_0}]_+^{k\cdot}) C_{t}^{t_0}(J_l+ [\varepsilon_l(C^{t_1}_t)B_t]^{l\cdot}_+) .\end{split} \]
	Now let us discuss the signs in detail. We shall notice that
	by the mutation of $G$-matrices, $G^{t_0}_{t'}$ and $G^{t_0}_{t}$ differ only in the $l$-th column, and thus we have the following result:
	\begin{enumerate}
		\item[(1)] $\varepsilon_k(G^{t_0}_{t'}) = \varepsilon_k(G^{t_0}_{t})$ if and only if there is a nonzero entry in the $k$-th row of $G^{t_0}_{t}$ outside the entry at $(k,l)$,
		\item[(2)]  $\varepsilon_k(G^{t_0}_{t'}) = -\varepsilon_k(G^{t_0}_{t})$ if and only if the $k$-th row of 
		$G^{t_0}_{t}$ is $\pm e_l$.
	\end{enumerate}
	On the other hand, by $(b)_s$, we know that $C_{t}^{t_0}$ and $C_{t}^{t_1}$ differ only in the $k$-th row, and thus we have the following result:
	\begin{enumerate}
		\item[(3)] $\varepsilon_l(C^{t_1}_{t}) = \varepsilon_l(C^{t_0}_{t})$ if and only if there is a nonzero entry in the $l$-th column of $C^{t_0}_{t}$ outside the entry at $(l,k)$,
		\item[(4)]  $\varepsilon_l(C^{t_1}_{t}) = -\varepsilon_l(C^{t_0}_{t})$ if and only if the $l$-th column of 
		$C^{t_0}_{t}$ is $\pm e_k$.
	\end{enumerate}
	
	If $\varepsilon_k(G^{t_0}_{t'}) = \varepsilon_k(G^{t_0}_{t})$, we need to show that $\varepsilon_l(C^{t_1}_{t}) = \varepsilon_l(C^{t_0}_{t})$. Indeed, if $\varepsilon_l(C^{t_1}_{t}) = -\varepsilon_l(C^{t_0}_{t})$, then the $l$-th column of 
	$C^{t_0}_{t}$ is $\pm e_k$, and by $(c)_{s}$, we know that the $l$-th column of 
	$\tilde{C}^{t_0}_{t}$ is $\pm e_k$. By Proposition \ref{sdual}, $(G^{t_0}_{t})^T \tilde{C}^{t_0}_{t} = I_n$, this implies that the $k$-th row of 
	$G^{t_0}_{t}$ is $\pm e_l$, and hence $\varepsilon_k(G^{t_0}_{t'}) = \varepsilon_k(G^{t_0}_{t})$, which is a contradiction. Thus if $\varepsilon_k(G^{t_0}_{t'}) = \varepsilon_k(G^{t_0}_{t})$, the equality \ref{bs1} holds.

	If $\varepsilon_k(G^{t_0}_{t'}) = -\varepsilon_k(G^{t_0}_{t})$, then the $k$-th row of 
	$G^{t_0}_{t}$ is $\pm e_l$. Thus by Proposition \ref{sdual} and the Assumption, we know that 
	$l$-th column of 
	$C^{t_0}_{t}$ is $\pm e_k$. Notice that in this case, we have $ \varepsilon_k(G^{t_0}_{t}) = \varepsilon_l(C^{t_0}_{t})$, and hence $\varepsilon_l(C^{t_1}_{t}) = -\varepsilon_l(C^{t_0}_{t})$.  Thus we may let $ \varepsilon_k(G^{t_0}_{t}) = \varepsilon_l(C^{t_0}_{t}) = \varepsilon \in \{1, -1\}$, and $ \varepsilon_k(G^{t_0}_{t'}) = \varepsilon_l(C^{t_1}_{t}) = -\varepsilon$. To prove 
	the equality \ref{bs1}, it suffices to show that 
	\[\begin{split} &(J_k + [\varepsilon B_{t_0}]^{k\cdot}_+)C^{t_0}_{t} (J_l+ [\varepsilon B_t)]_+^{l\cdot}) \\=&(J_k+[-\varepsilon B_{t_0}]_+^{k\cdot}) C_{t}^{t_0}(J_l+ [-\varepsilon B_t]^{l\cdot}_+) ,\end{split} \]
	which is equivalent to prove that 
	\[\begin{split} &C^{t_0}_{t} (J_l+ [\varepsilon B_t)]_+^{l\cdot}) (J_l+ [-\varepsilon B_t]^{l\cdot}_+) \\=&(J_k + [\varepsilon B_{t_0}]^{k\cdot}_+)(J_k+[-\varepsilon B_{t_0}]_+^{k\cdot}) C_{t}^{t_0}.\end{split} \]
	Then compute the above equality, we only need to show that
	\[(B_{t_0}C_{t}^{t_0})^{k\cdot} = C_{t}^{t_0}(B_t)^{l\cdot}.\]
	Note that in this case we have that \[(C_{t}^{t_0})^{\cdot l}  =  \varepsilon E_{kl} =(G_{t}^{t_0})^{k\cdot } .\] Thus we have that 
	\[C_{t}^{t_0}(B_t)^{l\cdot} = (C_{t}^{t_0})^{\cdot l} B_t =        (G_{t}^{t_0})^{k\cdot } B_t =  (G_{t}^{t_0}B_t)^{k\cdot } .  \]
	Here the first equality and the third equality follow from the facts that for any $n\times n$ matrices $P$ and $Q$, and any $k\in [1,n]$, we have that 
	\[P^{\cdot k}Q = P Q^{k\cdot},  \quad\quad P^{k\cdot}Q = (P Q)^{k\cdot}.\]
	By Proposition \ref{fdual}, we have that $(B_{t_0}C_{t}^{t_0})^{k\cdot}=(G_{t}^{t_0}B_t)^{k\cdot } $, and $(B_{t_0}C_{t}^{t_0})^{k\cdot} = C_{t}^{t_0}(B_t)^{l\cdot}$. This proves the equality 
	\ref{bs1} in this case.
	
	Let us prove the equality \ref{bs2}. Similarly, it is enough to show that
	\[\begin{split} &(J_k + [\varepsilon_k(G^{t_0}_{t'})B_{t_0}]^{\cdot k}_+)G^{t_0}_{t} (J_l+ [-\varepsilon_l(C_t^{t_0}B_t)]_+^{\cdot l}) \\=&(J_k+[\varepsilon_k(G_{t}^{t_0})B_{t_0}]_+^{\cdot k}) C_{t}^{t_0}(J_l+ [-\varepsilon_l(C^{t_1}_t)B_t]^{\cdot l}_+) .\end{split} \]
	After similar  discussion as above, the case of $\varepsilon_k(G^{t_0}_{t'}) = \varepsilon_k(G^{t_0}_{t})$ is similar.  When $\varepsilon_k(G^{t_0}_{t'}) = -\varepsilon_k(G^{t_0}_{t})$, as above, we only need to show that
	\[ (B_{t_0})^{\cdot k}G_t^{t_0} = (G_t^{t_0}B_t)^{\cdot l}.\]
	Similarly,  we have that 
	\[(B_{t_0})^{\cdot k}G_t^{t_0} = B_{t_0}(G_t^{t_0})^{k\cdot } =B_{t_0} (C_t^{t_0})^{\cdot l} =(B_{t_0} C_t^{t_0})^{\cdot l}  = (G_t^{t_0}B_t)^{\cdot l}.\]
	Therefore, the equality \ref{bs2} holds in this case. Then $(b)_{s+1}$ holds.
	
	Now assume that  the claims in \ref{abcd} up to $(b)_{s+1}$, then we show $(c)_{s+1}$.
	Suppose that $d(t_0, t) =s+1$. (We shall emphasis that here we assume that $d(t_0, t) =s+1$, while in the previous proof we often assume that $d(t_0, t) =s$.) For any $i, j\in [1,n]$, we need to prove that the $i$-th column of $C_t^{t_0}$ is $\pm e_j$ if and only if the $i$-th column of $\tilde{C}_t^{t_0}$ is $\pm e_j$.  Consider the following sequence 
	\[t_j\frac{j}{\quad\,}t_0 \frac{}{\quad\quad\quad\quad\quad\quad}t\]
	Since $d(t_0, t) = s+1$ and $(b)_{s+1}$ holds, we have that 
	\begin{equation}\label{e1}
		C_t^{t_1} = (J_k + [-\varepsilon_k(G^{t_0}_t)B_{t_0}]^{k\cdot}_+)C^{t_0}_t,
	\end{equation}
	\begin{equation}\label{e2}
		\tilde{C}_t^{t_1} = (J_k + [-\varepsilon_k(\tilde{G}^{t_0}_t)\tilde{B}_{t_0}]^{k\cdot}_+)\tilde{C}^{t_0}_t.
	\end{equation}
	Then we have the following facts:
	\begin{enumerate}
		\item By the equality \ref{e1}, we know that the $i$-th column of $C_t^{t_0}$ is $\pm e_j$ if and only if  $\varepsilon_i(C_t^{t_0}) = - \varepsilon_i(C_t^{t_j})$. 
		\item By the equality \ref{e2}, we know that the $i$-th column of $\tilde{C}_t^{t_0}$ is $\pm e_j$ if and only if  $\varepsilon_i(\tilde{C}_t^{t_0}) = - \varepsilon_i(\tilde{C}_t^{t_j})$. 
		\item By the {\bf Assumption}, we know that $\varepsilon_i(C_t^{t_0}) =\varepsilon_i(\tilde{C}_t^{t_0})$ and $  \varepsilon_i(C_t^{t_j})= \varepsilon_i(\tilde{C}_t^{t_j}).$
	\end{enumerate}
	Therefore the $i$-th column of $C_t^{t_0}$ is $\pm e_j$ if and only if the $i$-th column of $\tilde{C}_t^{t_0}$ is $\pm e_j$.
	
	Now assume that  the claims in \ref{abcd} up to $(c)_{s+1}$, then we show $(d)_{s+1}$.
	
	Suppose that $d(t_0, t) =s+1$.  By $(a)_{s+1}$, we have that \[G_t^{t_0} = (C_{t_0}^{B^T_t, t})^T, \quad \tilde{G}_t^{t_0} = (C_{t_0}^{-B_t, t})^T.\]
	By the {\bf Assumption}, we know that the signs of the columns of the $C$-matrices $C_{t_0}^{B^T_t, t}$ and $ C_{t_0}^{-B_t, t}$ are the same. Thus the signs of the rows of the $G$-matrices $G_t^{t_0} $ and $ \tilde{G}_t^{t_0}$ are the same.  
	
	Note that the $i$-th row of $G_t^{t_0}$ is $\pm e_j$ if and only if the $j$-th column of $\tilde{C}_t^{t_0}$ is $\pm e_i$, and the $i$-th row of $\tilde{G}_t^{t_0}$ is $\pm e_j$ if and only if the $j$-th column of $C_t^{t_0}$ is $\pm e_i$. While by $(c)_{s+1}$,  the $j$-th column of $\tilde{C}_t^{t_0}$ is $\pm e_i$ if and only if the $j$-th column of $C_t^{t_0}$ is $\pm e_i$. Hence we have that the $i$-th row of $G_t^{t_0}$ is $\pm e_j$ if and only if the the $i$-th row of $\tilde{G}_t^{t_0}$ is $\pm e_j$.  This finishes the proof.
\end{proof}

\begin{Proposition} Suppose that the {\bf Assumption} holds.
	Let $t_0 \frac{k}{\quad\,}t_1$ be two adjacent vertices in $\mathbb{T}_n$, and let $B^1 = \mu_k(B)$.  Then, for any $t\in \mathbb{T}_n$ and $\mathbf a \in \mathbb{N}^n$, then $g$-vectors $g_{\mathbf a,t}^{B, t_0} := \sum_{l=1}^n a_l g_{l,t}^{B,t_0} = (g_1,\dots, g_n)^T$ and $g_{\mathbf a,t}^{B^1, t_1} := \sum_{l=1}^n a_l g_{l,t}^{B^1,t_1} = (g'_1,\dots, g'_n)^T$  are related as follows:
	\begin{equation}\label{maini}
		g'_j = \begin{cases}
			-g_k \quad & j=k;\\
			g_j + [b_{jk}]_+g_k - b_{jk}min(g_k, 0) \quad & j\neq k.
		\end{cases}
	\end{equation}
\end{Proposition}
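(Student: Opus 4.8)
The plan is to obtain \eqref{maini} directly from the matrix identity recorded in Proposition~\ref{dualmut}(b); granting that identity, the statement reduces to a short coordinate computation, the one genuine point being to exploit $\mathbf a\in\mathbb N^n$ together with row sign-coherence in order to determine the sign of the $k$-th entry of $g_{\mathbf a,t}^{B,t_0}$. First I would observe that the matrix pattern attached to $B^1=\mu_k(B)$ with initial vertex $t_1$ shares the underlying exchange-matrix pattern $(B_t)_{t\in\mathbb T_n}$ with the one attached to $B$ at $t_0$, so that $g_{l,t}^{B^1,t_1}$ is the $l$-th column of $G_t^{t_1}$ and $g_{l,t}^{B,t_0}$ the $l$-th column of $G_t^{t_0}$; hence, setting $\mathbf g=(g_1,\dots,g_n)^T=G_t^{t_0}\mathbf a$ and $\mathbf g'=(g'_1,\dots,g'_n)^T=G_t^{t_1}\mathbf a$, the assertion \eqref{maini} becomes an identity between $\mathbf g'$ and $\mathbf g$. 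By Theorem~\ref{unim} the matrix $G_t^{t_0}$ is invertible, hence has no zero row, and by Proposition~\ref{dualmut}(a) its rows are sign-coherent, so $\varepsilon:=\varepsilon_k(G_t^{t_0})\in\{1,-1\}$ is well-defined; Proposition~\ref{dualmut}(b) then gives
\[
\mathbf g'=\bigl(J_k+[\varepsilon B_{t_0}]_+^{\cdot k}\bigr)G_t^{t_0}\mathbf a=\bigl(J_k+[\varepsilon B_{t_0}]_+^{\cdot k}\bigr)\mathbf g .
\]

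Next I would read off coordinates. The matrix $[\varepsilon B_{t_0}]_+^{\cdot k}$ is zero outside its $k$-th column, whose $(j,k)$-entry equals $[\varepsilon b_{jk}]_+$, while $b_{kk}=0$ since $B_{t_0}$ is sign-skew-symmetric; therefore $g'_k=-g_k$ and $g'_j=g_j+[\varepsilon b_{jk}]_+\,g_k$ for all $j\neq k$.

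It then remains to match $[\varepsilon b_{jk}]_+\,g_k$ with $[b_{jk}]_+g_k-b_{jk}\min(g_k,0)$, and this is the step where $\mathbf a\in\mathbb N^n$ enters: writing $g_k=\sum_l a_l\,(G_t^{t_0})_{kl}$ with all $(G_t^{t_0})_{kl}$ of sign $\varepsilon$ and all $a_l\geq 0$, one gets $g_k\geq 0$ when $\varepsilon=1$ and $g_k\leq 0$ when $\varepsilon=-1$. When $\varepsilon=1$ we have $\min(g_k,0)=0$, so $[b_{jk}]_+g_k-b_{jk}\min(g_k,0)=[b_{jk}]_+g_k=[\varepsilon b_{jk}]_+g_k$; when $\varepsilon=-1$ we have $\min(g_k,0)=g_k$, so using $[a]_+-a=[-a]_+$ we get $([b_{jk}]_+-b_{jk})g_k=[-b_{jk}]_+g_k=[\varepsilon b_{jk}]_+g_k$; and if $g_k=0$ both sides vanish, so \eqref{maini} follows. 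I do not anticipate a real obstacle here: the substantive content already sits in Proposition~\ref{dualmut}(b), whose proof is where the \textbf{Assumption} is used, and the present statement is essentially its translation to the level of $g$-vectors — the only care needed is the sign bookkeeping above and the remark that $G_t^{t_0}$ has no zero row, so that $\varepsilon_k(G_t^{t_0})$ is meaningful even when $g_k$ happens to vanish.
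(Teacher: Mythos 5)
Your proof is correct and follows essentially the same route as the paper: the paper also reduces \eqref{maini} to the matrix identity of Proposition~\ref{dualmut}, using part (a) (row sign-coherence, so that $\varepsilon=\varepsilon_k(G_t^{t_0})$ kills the correction term $B[-\varepsilon G_t^{t_0}]_+^{k\cdot}$) together with the mutation formula $G_t^{t_1}=(J_k+[\varepsilon_k(G_t^{t_0})B_{t_0}]_+^{\cdot k})G_t^{t_0}$. You merely spell out the coordinate and sign bookkeeping (including the role of $\mathbf a\in\mathbb N^n$ and the nonvanishing of rows via Theorem~\ref{unim}) that the paper's terse proof leaves implicit, so there is no gap.
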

\begin{proof}
	We may write the equation \ref{maini} in the matrix form as follows:
	\[G^{t_1}_t =(J_k + [\varepsilon B]_+^{\cdot k})G_{t}^{t_0} + B[-\varepsilon G_{t}^{t_0}]_+^{k\cdot}, \]
	where $\varepsilon \in \{1,-1\}$. Thanks to Proposition \ref{dualmut}  (a),  we may let $\varepsilon = \varepsilon_k(G_{t}^{t_0})$, and by Proposition \ref{dualmut}  (d), the 
	equation \ref{maini} holds.
\end{proof}

Then we have the following results for acyclic cluster algebras.
\begin{Theorem}\label{acydualmut}
	If $B$ is mutation to an acyclic sign-skew-symmetric matrix. Then we have that 
	\begin{enumerate}
		\item For each $t\in \mathbb{T}_n$ and $k\in [1,n]$, $\varepsilon_k(C_t) = \varepsilon_k(\tilde{C}_t)$.
		\item  For any $ t \in \mathbb{T}_n$ and $i, j\in [1,n]$, we have that the $i$-th column of $C_t^{t_0}$ is $\pm e_j$ if and only if the the $i$-th column of $\tilde{C}_t^{t_0}$ is $\pm e_j$.
		\item  	For any  $t_0, t \in \mathbb{T}_n$, 
		$(G_t^{t_0})^T\tilde{C}_{t}^{t_0} = I_n.$
		\item For any $t_0, t_1, t\in \mathbb{T}_n$ such that $t_0 \frac{k}{\quad\,}t_1$ are two adjacent vertices on $\mathbb{T}_n$, we have that 
		\begin{equation}
			C_t^{t_1} = (J_k + [-\varepsilon_k(G^{t_0}_t)B_{t_0}]^{k\cdot}_+)C^{t_0}_t,
		\end{equation}
		\begin{equation}
			G_t^{t_1} = (J_k + [\varepsilon_k(G^{t_0}_t)B_{t_0}]^{\cdot k}_+)G^{t_0}_t.
		\end{equation}
		\item For each $t\in \mathbb{T}_n$ and $i,j, k\in [1,n]$, $\varepsilon_k(G_t) = \varepsilon_k(\tilde{G}_t)$,
		and the $i$-th row of $G_t^{t_0}$ is $\pm e_j$ if and only if the the $i$-th row of $\tilde{G}_t^{t_0}$ is $\pm e_j$.
	\end{enumerate}
\end{Theorem}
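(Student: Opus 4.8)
The plan is to deduce the whole statement from what has already been proved under the \textbf{Assumption}. Once the \textbf{Assumption} is known to hold for $B$, items (1)--(5) are nothing but restatements of earlier results: (1) is the \textbf{Assumption} itself, (2) is Proposition~\ref{dualmut}(c), (3) is Proposition~\ref{sdual}, (4) is Proposition~\ref{dualmut}(b), and (5) is Proposition~\ref{dualmut}(a) and (d). So the only thing that genuinely needs an argument is the implication: \emph{if $B$ is mutation equivalent to an acyclic sign-skew-symmetric matrix, then the \textbf{Assumption} holds for $B$.}

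For this I would first record an elementary closure property of the mutation class. The mutation rule gives $\mu_k(-M)=-\mu_k(M)$ and $\mu_k(M^{T})=(\mu_k M)^{T}$ for every totally sign-skew-symmetric $M$, and negation and transposition both preserve acyclicity of the associated directed graph; hence $B$, $-B$, $B^{T}$ and $\tilde B=-B^{T}$, and indeed every matrix in their mutation classes, are mutation equivalent to acyclic sign-skew-symmetric matrices. By Theorem~\ref{hlunfolding} and Lemma~\ref{gff}, the framed matrix $\left(\begin{smallmatrix} M & -I_n\\ I_n & 0\end{smallmatrix}\right)$ then admits an unfolding for each such $M$, so Corollary~\ref{printotal} applies: for every mutation sequence $\sigma$ the matrix $\mu_\sigma\!\left(\begin{smallmatrix} M & -I_n\\ I_n & 0\end{smallmatrix}\right)$ is sign-skew-symmetric, and (as in the proof of Lemma~\ref{gff}) its frozen--frozen block vanishes, so it has the block shape $\left(\begin{smallmatrix} M_t & P^{M}\\ C^{M}_t & 0\end{smallmatrix}\right)$ with $C^{M}_t$ the $C$-matrix of $M$ at the endpoint of $\sigma$.

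The key step is then to compare the framed matrices of $B$ and of $\tilde B=-B^{T}$. Since $\left(\begin{smallmatrix}-B^{T} & -I_n\\ I_n & 0\end{smallmatrix}\right)=-\left(\begin{smallmatrix} B & -I_n\\ I_n & 0\end{smallmatrix}\right)^{T}$, the closure property shows that $\mu_\sigma$ of the left-hand side equals minus the transpose of $\mu_\sigma$ of the right-hand side; reading off lower-left blocks gives $\tilde C_t^{t_0}=-(P^{B})^{T}$. Combined with the sign-skew-symmetry of $\mu_\sigma\!\left(\begin{smallmatrix} B & -I_n\\ I_n & 0\end{smallmatrix}\right)$ (so that the entries $P^{B}_{ij}$ and $(C^{B}_t)_{ji}$ are sign-opposite) and the column sign-coherence of $C$-matrices of $B$ (\cite{LP}), one gets that the $i$-th row of $P^{B}$ is sign-coherent of sign $-\varepsilon_i(C^{B}_t)$; transposing, the $i$-th column of $\tilde C_t^{t_0}$ is sign-coherent of sign $\varepsilon_i(C^{B}_t)$, that is, $\varepsilon_i(\tilde C_t^{t_0})=\varepsilon_i(C_t^{t_0})$. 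The same argument with $-B$ in place of $B$ (for which the associated negative transpose is $B^{T}$) settles the $\mathcal A(-B)$ half, and since every vertex of the class is again mutation equivalent to an acyclic matrix, the conclusion does not depend on the choice of initial vertex. Hence the \textbf{Assumption} holds for $B$, and (1)--(5) follow from Propositions~\ref{sdual} and \ref{dualmut}.

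I expect the only delicate point to be the bookkeeping of the third paragraph: checking that the lower-left block of the mutated $2n\times 2n$ framed matrix really carries the $C$-matrix in the conventions of Section~\ref{pg}, that the frozen--frozen block does stay $0$ (this is where the unfolding, not merely sign-skew-symmetry, is used), and that ``sign-opposite entries plus column sign-coherence of $C^{B}_t$'' propagates correctly to row sign-coherence of $P^{B}$. Everything else is assembly of Corollary~\ref{printotal}, Proposition~\ref{sdual} and Proposition~\ref{dualmut}; indeed, granting the remark recorded earlier that Conjecture~\ref{conl} implies the \textbf{Assumption}, the theorem follows at once from those three results.
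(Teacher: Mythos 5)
Your proposal is correct, and it follows the route the paper intends: the theorem is meant to be the specialization of Propositions \ref{sdual} and \ref{dualmut} (and the \textbf{Assumption} itself for item (1)) to matrices mutation equivalent to acyclic ones, the \textbf{Assumption} being guaranteed by Corollary \ref{printotal} via the remark ``Conjecture \ref{conl} implies the \textbf{Assumption}.'' The difference is that the paper gives no proof at all of that remark or of the theorem, whereas you actually supply the missing implication: the identities $\mu_k(-M)=-\mu_k(M)$ and $\mu_k(M^{T})=(\mu_k M)^{T}$, the closure of the acyclic mutation class under negation and transposition (so that $-B$, $B^{T}$, $-B^{T}$ and every matrix attached to another initial vertex are covered), the block identity $\tilde C_t^{t_0}=-(P^{B})^{T}$ coming from $\bigl(\begin{smallmatrix}-B^{T}&-I_n\\ I_n&0\end{smallmatrix}\bigr)=-\bigl(\begin{smallmatrix}B&-I_n\\ I_n&0\end{smallmatrix}\bigr)^{T}$, and the sign bookkeeping combining sign-skew-symmetry of the mutated framed matrix (Corollary \ref{printotal}) with column sign-coherence of $C_t$. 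The sign step is sound; to make $\varepsilon_i$ of a row of $P^{B}$ well defined you should note explicitly that column $i$ of $C_t$ is nonzero (e.g.\ $|C_t|=\pm1$ by Theorem \ref{unim}), so sign-skew-symmetry forces a nonzero entry in that row of $P^{B}$. Two minor remarks: the vanishing of the frozen--frozen block is not actually needed for your comparison of the off-diagonal blocks, so you can drop that worry; and the identification of the lower-left block with the $C$-matrix is exactly the tropical $y$-mutation rewritten as matrix mutation, which holds formally for any totally sign-skew-symmetric initial matrix, so no extra hypothesis is used there. With these points spelled out, your argument is a complete proof of a step the paper only asserts.
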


\subsection{$G$-fan and the mutation fan}
In this subsection, we always assume that the {\bf Assumption} holds. Let us recall some basic notions on cones and fans.

\begin{Definition}
	A subset $C \subset \mathbb{R}^n$ is called a convex cone if $av \in C$ and $v+v'\in C$ for any $a\in \mathbb{R}_{>0}$ and $v, v'\in C$. In particular, $C$ is called a convex polyhedral cone if there exists a finite number of elements $v_1, \dots, v_s \in C$ such that 
	\[C= \mathbb{R}_{\geq 0}v_1+ \dots +  \mathbb{R}_{\geq 0}v_s  := \{a_1v_1+ \dots + a_sv_s\,|\, a_i\geq 0,\,\,1\leq i\leq s\}.\]
	The convex polyhedral cone $C$ is called a simplicial  cone, if $v_1, \dots, v_s$ can be chosen to be $\mathbb{R}$-linearly independent.
	
	A convex cone is rational if it is non-negative $\mathbb{R}$-linear span of finitely many integer vectors, or equivalently, it can be defined by finitely many weak linear inequalities with integer coefficients.
\end{Definition}

\begin{Definition}
	A subset $F$ of a convex set $C$ is a face if $F$ is convex and if any line segment $L\subset C$ whose interior intersects $F$ has  $L\subset F$.
\end{Definition}

\begin{Definition}\label{fan}
	A fan $\mathcal{F}$ in $\mathbb{R}^n$ is a collection of closed convex  cones, such
	that 
	\begin{enumerate}
		\item any face of a cone in $\mathcal{F}$ is also a cone in $\mathcal{F}$;
		\item  the intersection of any two cones in $\mathcal{F}$ is a face of each.
	\end{enumerate}
\end{Definition}

In the Definition \ref{fan}, we do not require that the fan is a finite set.

\begin{Definition}
	For a totally sign-skew-symmetric matrix pattern $\{(B_t, C_t, G_t)\}_{t\in \mathbb{T}_n}$, the cone 
	\[\sigma(G_t) := \mathbb{R}_{\geq 0} g_{1,t} + \dots +  \mathbb{R}_{\geq 0} g_{n,t} \]
	is  called a $G$-cone.  The interior of $\sigma(G_t)$ is denoted by $\bar{\sigma}(G_t)$. 
\end{Definition}

Since the $G$-matrix $G_t$ is invertible, we have that 
\[\bar{\sigma}(G_t) := \mathbb{R}_{> 0} g_{1,t} + \dots +  \mathbb{R}_{> 0} g_{n,t} .\]
For a totally sign-skew-symmetric matrix pattern $\{(B_t, C_t, G_t)\}_{t\in \mathbb{T}_n}$ with the initial vertex $t_0$, let us introduce a notation as follows:
\[\mathcal{F}_{G^{t_0}} := \{\sigma(G_t^{t_0}) \,\,\text{and its faces}, t\in \mathbb{T}_n\}.\]

For $k\in [1,n]$, there are two half spaces of $\mathbb{R}^n$ defined as follows:
\[e_k^+ := \{w=(w_1,\dots,w_n)\in \mathbb{R}^n\,|\, w_k\geq 0\}, \quad e_k^- := \{w=(w_1,\dots,w_n)\in \mathbb{R}^n\,|\, w_k\leq 0\}. \]
Let $t_0 \frac{k}{\quad\,}t_1$ be two adjacent vertices in $\mathbb{T}_n$.  Define the piece-linear map $\eta_{t_0}^{t_1}$ as follows:
\[\begin{split} \eta_{t_0}^{t_1}: & \mathbb{R}^n \rightarrow \quad\quad \mathbb{R}^n \\
	& w \mapsto \begin{cases}     (J_k + [B_{t_0}]_+^{\cdot k})w   \quad & w\in  e_k^+ ;\\
		(J_k + [-B_{t_0}]_+^{\cdot k})w   \quad & w\in  e_k^-.  \end{cases}
\end{split}\]
Note that $ \eta_{t_0}^{t_1}$ is well-defined, since if $w_k=0$, we have that $(J_k + [B_{t_0}]_+^{\cdot k})w =w= (J_k + [-B_{t_0}]_+^{\cdot k})w$.
By the Proposition \ref{dualmut}, we have that  for any $t\in \mathbb{T}_n$, 
\[G_t^{t_1} = (J_k + [\varepsilon_k(G^{t_0}_t)B_{t_0}]^{\cdot k}_+)G^{t_0}_t,\]
which means that
\[g_{i,t}^{t_1} = (J_k + [\varepsilon_k(G^{t_0}_t)B_{t_0}]^{\cdot k}_+)g^{t_0}_{i,t}, \forall i\in [1,n].\]
Thus  we have that 
\[ \eta_{t_0}^{t_1} (g^{t_0}_{i,t}) = g_{i,t}^{t_1} \]
for any $i\in[1,n]$ and $t\in \mathbb{T}_n$.

\begin{Proposition}[cf, \cite{N,R}]\label{eta} The following statements hold.
	\begin{enumerate}
		\item $\eta_{t_0}^{t_1} \eta_{t_1}^{t_0} = id = \eta_{t_1}^{t_0}\eta_{t_0}^{t_1}$.
		\item  $\eta_{t_0}^{t_1}(\sigma(G_t^{t_0})) = \sigma(G_t^{t_1})$.
		\item the map \[  \begin{split}    \eta_{t_0}^{t_1}: & \mathcal{F}_{G^{t_0}}  \rightarrow  \mathcal{F}_{G^{t_0}} \\&    g^{t_0}_{i,t} \mapsto g_{i,t}^{t_1}   \end{split}  \] is a bijection between $\mathcal{F}_{G^{t_0}} $ and  $ \mathcal{F}_{G^{t_0}}$ preserving the intersection and the inclusion of cones.		\end{enumerate}
\end{Proposition}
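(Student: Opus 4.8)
The plan is to dispatch the three assertions in turn, using the mutation formulas of Proposition \ref{dualmut} and the invertibility statement of Theorem \ref{unim}; once one pins down exactly where the piecewise-linear map $\eta_{t_0}^{t_1}$ is actually linear, everything is formal. For part (1), I would first rewrite $\eta_{t_1}^{t_0}$ in terms of $B_{t_0}$: since $B_{t_1} = \mu_k(B_{t_0})$, the $k$-th column of $B_{t_1}$ is the negative of the $k$-th column of $B_{t_0}$, whence $[B_{t_1}]_+^{\cdot k} = [-B_{t_0}]_+^{\cdot k}$ and $[-B_{t_1}]_+^{\cdot k} = [B_{t_0}]_+^{\cdot k}$. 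Next I would record that each of $J_k + [B_{t_0}]_+^{\cdot k}$ and $J_k + [-B_{t_0}]_+^{\cdot k}$ squares to $I_n$ (this is already noted in the proof of Theorem \ref{unim}) and negates the $k$-th coordinate (using $b_{kk,t_0}=0$), so it interchanges the half-spaces $e_k^+$ and $e_k^-$. A direct case analysis on $w\in e_k^+$ and on $w\in e_k^-$ then gives $\eta_{t_1}^{t_0}(\eta_{t_0}^{t_1}(w)) = (J_k + [\pm B_{t_0}]_+^{\cdot k})^2 w = w$; interchanging the roles of $t_0$ and $t_1$ (note $\mu_k(B_{t_1})=B_{t_0}$) yields the other composition. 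In particular $\eta_{t_0}^{t_1}$ is a bijection of $\mathbb{R}^n$.

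For part (2), the crucial input is the row sign-coherence of $G$-matrices, Proposition \ref{dualmut}(a): the $k$-th row of $G_t^{t_0}$ is sign-coherent with sign $\varepsilon := \varepsilon_k(G_t^{t_0})$, so every column $g_{i,t}^{t_0}$ lies in $e_k^{\varepsilon}$, and since $e_k^{\varepsilon}$ is a convex cone the whole $G$-cone $\sigma(G_t^{t_0})$ lies in $e_k^{\varepsilon}$. Hence $\eta_{t_0}^{t_1}$ agrees on $\sigma(G_t^{t_0})$ with the single linear map $J_k + [\varepsilon B_{t_0}]_+^{\cdot k}$, which by Proposition \ref{dualmut}(b) sends each $g_{i,t}^{t_0}$ to $g_{i,t}^{t_1}$ — this is exactly the identity $\eta_{t_0}^{t_1}(g_{i,t}^{t_0}) = g_{i,t}^{t_1}$ recorded just before the statement. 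Since a linear map carries a finitely generated cone onto the cone generated by the images, $\eta_{t_0}^{t_1}(\sigma(G_t^{t_0})) = \sum_i \mathbb{R}_{\geq 0} g_{i,t}^{t_1} = \sigma(G_t^{t_1})$.

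For part (3) (where the target is naturally $\mathcal{F}_{G^{t_1}}$), I would use Theorem \ref{unim} to note that $\sigma(G_t^{t_0})$ is simplicial, so its faces are precisely the subcones $\sum_{i\in S}\mathbb{R}_{\geq 0}g_{i,t}^{t_0}$ with $S\subseteq[1,n]$. Each such face lies inside $\sigma(G_t^{t_0})$, hence inside $e_k^{\varepsilon}$ (and on faces meeting the hyperplane $w_k=0$ the two branches of $\eta_{t_0}^{t_1}$ coincide anyway), so $\eta_{t_0}^{t_1}$ restricts linearly to it and sends it to $\sum_{i\in S}\mathbb{R}_{\geq 0}g_{i,t}^{t_1}$, a face of $\sigma(G_t^{t_1})$ by the linear independence of the $g_{i,t}^{t_1}$. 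Thus $\eta_{t_0}^{t_1}(\mathcal{F}_{G^{t_0}})\subseteq \mathcal{F}_{G^{t_1}}$, and symmetrically $\eta_{t_1}^{t_0}(\mathcal{F}_{G^{t_1}})\subseteq\mathcal{F}_{G^{t_0}}$; by part (1) these two maps are mutually inverse, so $\eta_{t_0}^{t_1}$ is a bijection of fans, and preservation of inclusions and of intersections is immediate since $\eta_{t_0}^{t_1}$ is a bijection of the ambient set $\mathbb{R}^n$. The only genuinely non-bookkeeping step in all of this is the observation that each cone of $\mathcal{F}_{G^{t_0}}$ lies in a single half-space $e_k^{\pm}$, which is where the row sign-coherence of $G$-matrices — hence the \textbf{Assumption} via Proposition \ref{dualmut} — enters; the rest follows formally from the mutation formulas and Theorem \ref{unim}.
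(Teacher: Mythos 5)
Your proposal is correct and follows essentially the same route as the paper: the key step in both is that row sign-coherence of $G$-matrices (Proposition \ref{dualmut}) forces each $G$-cone into a single half-space $e_k^{\pm}$, so that $\eta_{t_0}^{t_1}$ acts linearly on it and sends generators $g_{i,t}^{t_0}$ to $g_{i,t}^{t_1}$. The paper only writes this argument out for part (2), leaving (1) and (3) to the cited references, whereas you supply those details explicitly (the involution identity $(J_k+[\pm B_{t_0}]_+^{\cdot k})^2=I_n$, the identification $[B_{t_1}]_+^{\cdot k}=[-B_{t_0}]_+^{\cdot k}$, and the simpliciality of the cones via Theorem \ref{unim}), and you correctly read the target of the map in (3) as $\mathcal{F}_{G^{t_1}}$.
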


\begin{proof}
	By Proposition \ref{dualmut}, we know that the row sign-coherence holds for $G$-matrices,
	thus we have that either $\sigma(G_t^{t_0}) \subset e_k^+$ or  $\sigma(G_t^{t_0}) \subset e_k^-$. Thus $\eta_{t_0}^{t_1}$ is linear on $\sigma(G_t^{t_0})$ and note that $\eta_{t_0}^{t_1} (g^{t_0}_{i,t}) = g_{i,t}^{t_1}$, we have that 
	\[ \eta_{t_0}^{t_1}(\sigma(G_t^{t_0})) = \sigma(G_t^{t_1}).\]
	
\end{proof}

\begin{Theorem}[cf.\cite{N,R}]\label{Gfan}
	Suppose that the {\bf Assumption} holds. Then the following set 
	\[\mathcal{F}_{G^{t_0}} := \{\sigma(G_t) \,\,\text{and its faces}, t\in \mathbb{T}_n\}\]
	is a fan.
\end{Theorem}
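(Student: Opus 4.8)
The plan is to verify the two conditions in Definition \ref{fan}. The first condition -- that every face of a cone in $\mathcal{F}_{G^{t_0}}$ again belongs to $\mathcal{F}_{G^{t_0}}$ -- is immediate, since $\mathcal{F}_{G^{t_0}}$ is by construction closed under taking faces and a face of a face of $\sigma(G_t)$ is again a face of $\sigma(G_t)$. So the whole content lies in the second condition, and since the intersection of arbitrary members of $\mathcal{F}_{G^{t_0}}$ is obtained by intersecting faces of two top-dimensional $G$-cones, it suffices to prove: for all $s,t\in\mathbb{T}_n$ the set $\sigma(G_s^{t_0})\cap\sigma(G_t^{t_0})$ is a face of both $\sigma(G_s^{t_0})$ and $\sigma(G_t^{t_0})$. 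The tools available are: each $\sigma(G_t)$ is a full-dimensional simplicial cone, its generators being a $\mathbb{Z}$-basis of $\mathbb{Z}^n$ by Theorem \ref{unim}; each $G$-cone lies in one of $e_k^{+},e_k^{-}$ by the row sign-coherence of Proposition \ref{dualmut}(a); and the piecewise-linear maps $\eta_{t_0}^{t_1}$ of Proposition \ref{eta} carry $G$-cones and their faces bijectively to $G$-cones and their faces, preserving inclusions and intersections, so that whether $\mathcal{F}_{G^{t_0}}$ is a fan does not depend on $t_0$.

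I would induct on the distance $d(s,t)$ in $\mathbb{T}_n$. The case $d(s,t)=0$ is trivial. If $s$ and $t$ are joined by an edge labelled $k$, then by Proposition \ref{cgmm} the matrices $G_s$ and $G_t$ agree in every column except the $k$-th and $g_{k,t}=-g_{k,s}+\sum_{j\neq k}[-\varepsilon\,b_{jk,s}]_+\,g_{j,s}$ with $\varepsilon=\varepsilon_k(C_s)$; writing $H$ for the hyperplane spanned by $\{g_{j,s}:j\neq k\}$ (a genuine hyperplane since $G_s$ is invertible), this shows $g_{k,s}$ and $g_{k,t}$ lie strictly on opposite sides of $H$, hence $\sigma(G_s)\subseteq H^{-}$, $\sigma(G_t)\subseteq H^{+}$ for the two closed half-spaces bounded by $H$, and
\[\sigma(G_s)\cap\sigma(G_t)=\sigma(G_s)\cap H=\mathrm{cone}\big(g_{j,s}:j\neq k\big)=\sigma(G_t)\cap H\]
is a common facet. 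For $d(s,t)=d\geq 2$, choose a geodesic $s=u_0,u_1,\dots,u_d=t$ with first edge labelled $k$, and let $H$ be the hyperplane separating $\sigma(G_{u_0})\subseteq H^{-}$ from $\sigma(G_{u_1})\subseteq H^{+}$ as above. If one knows that $\sigma(G_t)\subseteq H^{+}$, then $\sigma(G_s)\cap\sigma(G_t)\subseteq H^{-}\cap H^{+}=H$, so with $F:=\sigma(G_s)\cap H=\mathrm{cone}(g_{j,u_0}:j\neq k)$, the common facet of $\sigma(G_{u_0})$ and $\sigma(G_{u_1})$, we get $\sigma(G_s)\cap\sigma(G_t)=F\cap\sigma(G_t)=F\cap\big(\sigma(G_{u_1})\cap\sigma(G_t)\big)$; by the inductive hypothesis $Q:=\sigma(G_{u_1})\cap\sigma(G_t)$ is a common face of $\sigma(G_{u_1})$ and $\sigma(G_t)$, so $F\cap Q$, being the intersection of two faces of $\sigma(G_{u_1})$ contained in $F$, is a face of $F$ and hence of $\sigma(G_s)$, and it is also a face of $Q$ and hence of $\sigma(G_t)$. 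This closes the induction modulo the claim $\sigma(G_t)\subseteq H^{+}$.

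The main obstacle is exactly this claim: along a geodesic of seeds issuing from $s$ through a mutation at $k$, the $G$-cones never cross back over the facet-hyperplane $H$ of $\sigma(G_s)$. I would attack it by first moving the initial vertex to $u_0=s$, which is harmless since the fan property is independent of $t_0$; then $H$ becomes the coordinate hyperplane $\{w_k=0\}$ and $H^{\pm}=e_k^{\pm}$, and by Proposition \ref{dualmut}(a) every $G$-cone relative to $u_0$ lies in $e_k^{+}$ or in $e_k^{-}$, with $\sigma(G_{u_1})\subseteq e_k^{-}$. Tracking the $k$-th row of the $G$-matrices along the geodesic by the mutation of Proposition \ref{cgmm}, this row changes at a mutation in direction $\ell$ only in its $\ell$-th entry, and by row sign-coherence its overall sign can switch only when the row is already $\pm e_k$ -- precisely the rigidity recorded in Proposition \ref{dualmut}(c),(d) and Proposition \ref{sdual}. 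One is thus reduced to a monotonicity statement for the functional $w\mapsto w_k$ along the path, to be extracted from that rigidity together with the Assumption and the unimodularity of Theorem \ref{unim}. As a safer alternative I note that the proof of the fan property in the skew-symmetrizable case (Reading \cite{R}, Nakanishi \cite{N}) uses only the unimodularity of $G$-matrices, the matrix mutation rule, the row sign-coherence of $G$-matrices, and the behaviour of the maps $\eta_{t_0}^{t_1}$, all of which are now available under the Assumption by Theorem \ref{unim} and Propositions \ref{cgmm}, \ref{dualmut} and \ref{eta}; so that argument transfers essentially verbatim, and I expect the final proof to follow this route.
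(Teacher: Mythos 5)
Your proposal is correct and matches the paper: the paper's entire proof of Theorem \ref{Gfan} is the remark that the argument of [Theorem 3.17, \cite{N}] carries over, which is exactly your "safer alternative," since the only inputs that proof needs (unimodularity from Theorem \ref{unim}, the mutation rule of Proposition \ref{cgmm}, row sign-coherence and the dual mutation from Proposition \ref{dualmut}, and the maps $\eta_{t_0}^{t_1}$ of Proposition \ref{eta}) are established under the \textbf{Assumption}. Your more detailed inductive sketch, including the key "cones never cross back over the facet hyperplane" claim you leave to that transferred argument, is just an unpacking of the same route, so there is no substantive difference from the paper.
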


\begin{proof}
The proof is similar to the proof  of  [Theorem 3.17, \cite{N}].
\end{proof}

\begin{Corollary}
	
	For any $t\in \mathbb{T}_n$ and $i\in [1,n]$, if the $g$-vector $g_{i,t}$ is non-negative, then $g_{i,t} = e_j$ for some $j$, and consequently, the cluster variable $x_{i,t}$ satisfies that $x_{i,t} = x_{j,t_0}$. 
	
\end{Corollary}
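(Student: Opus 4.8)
The plan is to read the hypothesis ``$g_{i,t}$ is non-negative'' as the statement that $g_{i,t}$ lies in the initial $G$-cone $\sigma(G_{t_0})=\mathbb{R}_{\geq 0}^n$ (recall $G_{t_0}=I_n$), and then to exploit the fan structure of $\mathcal{F}_{G^{t_0}}$ from Theorem \ref{Gfan} to force $g_{i,t}$ onto a coordinate ray. So the first step is to identify the two relevant cones of $\mathcal{F}_{G^{t_0}}$: on one hand $\sigma(G_{t_0})=\mathbb{R}_{\geq 0}^n$, whose faces are exactly the coordinate subcones and whose one-dimensional faces are the rays $\mathbb{R}_{\geq 0}e_l$; on the other hand $\mathbb{R}_{\geq 0}g_{i,t}$, which is a face of the simplicial cone $\sigma(G_t)$ — simplicial because $G_t$ is invertible by Theorem \ref{unim} — and hence itself a cone of $\mathcal{F}_{G^{t_0}}$.

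Next I would run the fan axiom. Since $g_{i,t}\in\mathbb{R}_{\geq 0}^n$ and $\mathbb{R}_{\geq 0}^n$ is a cone, $\mathbb{R}_{\geq 0}g_{i,t}\subseteq\sigma(G_{t_0})$, so the intersection of the two cones above is $\mathbb{R}_{\geq 0}g_{i,t}$ itself. By Definition \ref{fan}(2) this intersection must be a face of $\sigma(G_{t_0})=\mathbb{R}_{\geq 0}^n$; since $g_{i,t}\neq 0$ (it is a member of a $\mathbb{Z}$-basis of $\mathbb{Z}^n$ by Theorem \ref{unim}), this face is one-dimensional, hence equals $\mathbb{R}_{\geq 0}e_j$ for some $j$. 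Thus $g_{i,t}=\lambda e_j$ with $\lambda>0$, and primitivity of $g_{i,t}$ — again because it is part of a $\mathbb{Z}$-basis, or because $|G_t|\in\{1,-1\}$ — forces $\lambda=1$, i.e. $g_{i,t}=e_j$.

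For the ``consequently'' clause I would note that $e_j=g_{j,t_0}$ is the $g$-vector of the initial cluster variable $x_{j,t_0}$, so $x_{i,t}$ and $x_{j,t_0}$ carry the same $g$-vector; the implication ``equal $g$-vectors $\Rightarrow$ equal cluster variables'' (claim (\ref{0099}), a consequence of the positivity theorem together with the structural results established above, cf. \cite{N}) then gives $x_{i,t}=x_{j,t_0}$.

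Once Theorem \ref{Gfan} is in hand the deduction is essentially bookkeeping. The two points that need care are the verification that $\mathbb{R}_{\geq 0}g_{i,t}$ is genuinely a face of $\sigma(G_t)$ (this is exactly where the invertibility of $G_t$ from Theorem \ref{unim} enters) and the non-fan-theoretic passage from $g_{i,t}=e_j$ to $x_{i,t}=x_{j,t_0}$, which relies on the cluster-variable/$g$-vector bijection rather than on anything about cones. Accordingly the real obstacle lies upstream — in Theorem \ref{Gfan}, and hence in the {\bf Assumption} — not in this corollary.
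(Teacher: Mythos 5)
Your proof is correct, and since the paper states this corollary without any proof immediately after Theorem \ref{Gfan}, your fan-theoretic argument (intersect the ray $\mathbb{R}_{\geq 0}g_{i,t}$, a face of the simplicial cone $\sigma(G_t)$, with $\sigma(G_{t_0})=\mathbb{R}_{\geq 0}^n$ and invoke the fan axiom plus unimodularity of $G_t$) is exactly the intended deduction. The only point worth flagging is that the ``consequently'' step quietly uses the $g$-vector/cluster-variable bijection (Theorem \ref{sss}, stated later in the paper, or claim (\ref{0099})), which is legitimate since that theorem does not depend on this corollary, but you should cite it explicitly as the source of the implication $g_{i,t}=g_{j,t_0}\Rightarrow x_{i,t}=x_{j,t_0}$.
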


\begin{Definition}
	Let $B$ be a  totally sign-skew-symmetric matrix, and let $\{(B_t, C_t, G_t)\}_{t\in \mathbb{T}_n}$ be the corresponding matrix pattern. Two vectors $w, w' \in \mathbb{R}^n$ are said to be sign-equivalent (with respect  to $B$), if  \[sign(\eta_{t_0}^{t}(w))  =   sign(\eta_{t_0}^{t}(w'))\] for any $t\in \mathbb{T}_n$. Here for $w\in \mathbb{R}^n$, $sign(w) := (sign(w_1),\dots, sign(w_n))$.
	
	Sign-equivalence defines an equivalence relation. We call the equivalence classes $B$-classes. 
\end{Definition}

Reading proved that the closure of any $B$-class are convex cones.
\begin{Lemma}[\cite{R}]
	
	The closures of $B$-classes are convex cones.
	
	We call them  $B$-cones.
\end{Lemma}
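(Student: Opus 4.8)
The plan is to attach to each vertex $t\in\mathbb{T}_n$ a complete fan $\mathcal{H}_t$ on $\mathbb{R}^n$ whose relatively open cones are exactly the sign regions of the piecewise-linear map $\eta_{t_0}^t$, to realize every $B$-class as an intersection $\bigcap_t \mathrm{ri}(C_t)$ of such cones, and then to pass to closures by the line-segment principle for relative interiors; this is in essence Reading's argument. For the unique path $t_0 = s_0, s_1, \dots, s_m = t$ in $\mathbb{T}_n$ write $\eta_{t_0}^t := \eta_{s_{m-1}}^{s_m}\circ\cdots\circ\eta_{s_0}^{s_1}$, which by Proposition \ref{eta}(1) is a piecewise-linear self-homeomorphism of $\mathbb{R}^n$ fixing the origin.

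The core is two facts, each proved by induction on the distance $d(t_0,t)$. First, \textbf{(I)}: the domains of linearity of $\eta_{t_0}^t$ form a complete fan $\mathcal{D}_t$ of rational polyhedral cones. Indeed $\mathcal{D}_{t_0}=\{\mathbb{R}^n\}$; and if $t'$ precedes $t$ on the path with edge label $k$, then $\eta_{t_0}^t = \eta_{t'}^t\circ\eta_{t_0}^{t'}$, so $\mathcal{D}_t$ is obtained from $\mathcal{D}_{t'}$ by slicing each maximal cone $D$ with the rational hyperplane $(\eta_{t_0}^{t'}|_D)^{-1}\{w_k=0\}$, the slices matching on common faces since $\eta_{t_0}^{t'}$ is continuous. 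Second, \textbf{(II)}: the closures of the sign regions $S^t_\epsilon := \{\,w : \mathrm{sign}(\eta_{t_0}^t(w)) = \epsilon\,\}$, $\epsilon\in\{+,0,-\}^n$, together with their faces, form a complete fan $\mathcal{H}_t$ of rational polyhedral cones whose relatively open cones are precisely the nonempty $S^t_\epsilon$. Here $\mathcal{H}_{t_0}$ is the fan of coordinate orthants; for the step one has $S^t_\epsilon = (\eta_{t_0}^{t'})^{-1}(S^{\mathcal{G}}_\epsilon)$, where $\mathcal{G}$ is the sign decomposition of the single elementary fold $\eta_{t'}^t$ at $k$. Since the $k$-th coordinate of $\eta_{t'}^t(u)$ is $-u_k$, the entry $\epsilon_k$ already determines which half-space $e_k^\pm$ contains $u$, so every cell of $\mathcal{G}$ is the intersection of a half-space (or the wall $\{u_k=0\}$) with the linear preimage of a coordinate orthant under one of the matrices $J_k + [\pm B_{t'}]_+^{\cdot k}$; as both of these restrict to the identity on $\{u_k=0\}$, the two linear pictures glue, and $\mathcal{G}$ is a complete fan. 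Pulling $\mathcal{G}$ back through $\eta_{t_0}^{t'}$ cell-by-cell over the cones of $\mathcal{D}_{t'}$ — on each of which $\eta_{t_0}^{t'}$ is linear and integral — and using \textbf{(I)} together with continuity for the face matchings shows that $\mathcal{H}_t$ is again a complete fan of rational polyhedral cones with the claimed relatively open cones.

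Granting \textbf{(II)}, fix $v\in\mathbb{R}^n$ and for each $t$ let $C_t$ be the unique cone of $\mathcal{H}_t$ with $v\in\mathrm{ri}(C_t)$, so that the $B$-class $[v]$ of $v$ is exactly $\bigcap_t\mathrm{ri}(C_t)$, hence nonempty. If $u\in\bigcap_t C_t$, then for each $t$ the line-segment principle, applied to $v\in\mathrm{ri}(C_t)$ and $u\in C_t$, gives $(1-\lambda)v+\lambda u\in\mathrm{ri}(C_t)$ for all $\lambda\in[0,1)$; thus $(1-\lambda)v+\lambda u\in[v]$ for every such $\lambda$ and tends to $u$ as $\lambda\to 1$, so $u\in\overline{[v]}$. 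Combined with the trivial inclusion $\overline{[v]}\subseteq\bigcap_t C_t$ (each $C_t$ is closed and contains $[v]$), this yields $\overline{[v]}=\bigcap_t C_t$, an intersection of closed convex cones and therefore itself a closed convex cone, as desired. The main obstacle is \textbf{(II)}: one must verify that the sign decomposition of an elementary fold is genuinely a fan, not merely a cover by convex cones, and that this fan structure survives pullback along the piecewise-linear map $\eta_{t_0}^{t'}$; both points rest on the compatibility of the two linear branches of a fold along their shared wall and on continuity of the composites, which is exactly where the explicit form of $\eta_{t_0}^{t_1}$ in terms of $J_k$ and $[\pm B_{t_0}]_+^{\cdot k}$ enters. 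The remainder is routine manipulation with relative interiors.
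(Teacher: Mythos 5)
Your strategy fails at step \textbf{(II)}, and the failure is not reparable by more care with face matchings: the closures of the final-sign regions $S^t_\epsilon=\{w:\mathrm{sign}(\eta_{t_0}^t(w))=\epsilon\}$ of a composite mutation map are in general \emph{not} convex, so they do not form a fan, and the subsequent identities $[v]=\bigcap_t\mathrm{ri}(C_t)$ and $\overline{[v]}=\bigcap_t C_t$ lose their meaning. Concretely, take $n=2$, $B_{t_0}=\left(\begin{smallmatrix}0&3\\-3&0\end{smallmatrix}\right)$, and the path from $t_0$ to $t_2$ with edge labels $1$ then $2$. With the paper's formulas one gets $\eta_{t_0}^{t_1}(w)=(-w_1,w_2)$ for $w_1\ge 0$, $(-w_1,3w_1+w_2)$ for $w_1\le 0$, and, since $B_{t_1}=\mu_1(B_{t_0})=\left(\begin{smallmatrix}0&-3\\3&0\end{smallmatrix}\right)$, $\eta_{t_1}^{t_2}(u)=(u_1,-u_2)$ for $u_2\ge 0$, $(u_1+3u_2,-u_2)$ for $u_2\le 0$. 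Then $\eta_{t_0}^{t_2}(1,-0.1)=(-1.3,\,0.1)$ and $\eta_{t_0}^{t_2}(-1,2)=(-2,1)$, so both points lie in $S^{t_2}_{(-,+)}$, while their midpoint $(0,0.95)$ maps to $(0,-0.95)$. Indeed $S^{t_2}_{(-,+)}=\{w_1\ge 0,\,w_2<0\}\cup\{w_1\le 0,\;3w_1+w_2<0,\;8w_1+3w_2<0\}$, a bent cone of angle greater than $\pi$ whose closure (the union of the two closed pieces) is not convex. This is exactly the point you acknowledge as "the main obstacle": pulling the orthant fan back through $\eta_{t_0}^{t'}$ cell-by-cell over $\mathcal{D}_{t'}$ yields a polyhedral complex, but a single sign region is a union of several of its cells, and the preimage of a convex cone under a piecewise-linear homeomorphism is generally not convex. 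Your elementary-fold analysis is fine, but it does not propagate along compositions.

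The argument that does work --- and is Reading's proof, which the paper simply cites without reproving --- records the signs at \emph{all} vertices from the outset rather than only the terminal one. If $v_1,v_2$ are sign-equivalent, show by induction along the path that $\eta_{t_0}^t$ is affine on the segment $[v_1,v_2]$: at each intermediate vertex the images of $v_1$ and $v_2$ have equal sign vectors, hence lie in a common closed half-space $e_k^{\pm}$, on which the next elementary fold is linear; therefore $\eta_{t_0}^t((1-\lambda)v_1+\lambda v_2)=(1-\lambda)\eta_{t_0}^t(v_1)+\lambda\,\eta_{t_0}^t(v_2)$, and a convex combination of two vectors with the same sign vector again has that sign vector. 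Hence every point of the segment is sign-equivalent to $v_1$ and $v_2$, so $B$-classes are themselves convex; they are cones because each $\eta_{t_0}^t$ is positively homogeneous, and closures of convex cones are closed convex cones. Note that this uses only the convexity of half-spaces and homogeneity --- no fan structure on sign regions is needed, and in the form you assert it, that structure does not exist.
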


\begin{Definition}
	Let $\mathcal{MF}(B)$ be the collection consisting of all $B$-cones, together with all faces of $B$-cones. This collection $\mathcal{MF}(B)$ is called the mutation fan for $B$. 
\end{Definition}

\begin{Theorem}[\cite{R}]
	The mutation fan $\mathcal{MF}(B)$  is a complete fan.
\end{Theorem}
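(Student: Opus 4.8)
The plan is to reproduce Reading's argument from \cite{R}: since the mutation fan $\mathcal{MF}(B)$ is built only from the matrix $B$ and the piecewise-linear maps $\eta_{t_0}^{t}$ — it involves no cluster-algebra data, and in particular does not use the {\bf Assumption} — his proof goes through verbatim for an arbitrary locally finite $B$. Two of the three items are formal. Completeness holds because sign-equivalence is an equivalence relation on all of $\mathbb{R}^{n}$: every $w\in\mathbb{R}^{n}$ lies in some $B$-class, hence in the $B$-cone obtained as its closure, so the $B$-cones cover $\mathbb{R}^{n}$. The face axiom holds because every face of a $B$-cone belongs to $\mathcal{MF}(B)$ by definition and the face relation on convex cones is transitive, so a face of a face of a $B$-cone is again a face of it; thus $\mathcal{MF}(B)$ is closed under passage to faces.

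The structural ingredient behind the remaining point is that each $\eta_{t_0}^{t}$ is \emph{linear} on each $B$-class. Fixing a $B$-class $\mathcal{C}$ and setting $\epsilon_{t}:=\mathrm{sign}(\eta_{t_0}^{t}(w))$ for $w\in\mathcal{C}$ (independent of $w$), and expanding $\eta_{t_0}^{t}$ as the composite of the one-step maps $\eta_{s_{i-1}}^{s_{i}}$ along the path $t_0=s_0,\dots,s_m=t$ in $\mathbb{T}_n$, at the $i$-th step the coordinate that selects the branch of $\eta_{s_{i-1}}^{s_{i}}$ has the fixed sign recorded by $\epsilon_{s_{i-1}}$; hence only one linear branch is ever used on $\mathcal{C}$, and $\eta_{t_0}^{t}|_{\mathcal{C}}$ coincides with a single invertible linear map $L_{t}$. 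It follows that
\[
\mathcal{C}=\{\,w\in\mathbb{R}^{n}:\mathrm{sign}(L_{t}w)=\epsilon_{t}\ \text{for all}\ t\in\mathbb{T}_n\,\},
\]
and relaxing the strict sign conditions to the corresponding weak inequalities, together with the equalities coming from the zero entries of the $\epsilon_{t}$, presents $\overline{\mathcal{C}}$ as an intersection of closed half-spaces and linear subspaces; this reproves the cited Lemma that $B$-cones are closed convex cones.

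The substantive part is the intersection axiom: for $B$-cones $C_1,C_2$ one must show $C_1\cap C_2$ is a face of each. I would organize this around the presentation above. First one checks, by the same branch analysis, that the domains of linearity of $\eta_{t_0}^{t}$, sliced further by the preimages $(\eta_{t_0}^{t})^{-1}(e_i^{\pm})$ of the coordinate half-spaces, form a finite complete fan $\mathcal{D}_{t}$ — the refinements on overlapping domains agree on common faces by continuity of $\eta_{t_0}^{t}$ — and that $\mathcal{MF}(B)$ is exactly the common refinement of the $\mathcal{D}_{t}$, $t\in\mathbb{T}_n$, its cells being the $B$-classes and their boundary strata. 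Then $C_1\cap C_2$ is an intersection of cones drawn from the $\mathcal{D}_{t}$, it is a closed convex cone, and it is a face of each $\mathcal{D}_{t}$-cone containing it, hence a face of $C_1$ and of $C_2$. The delicate point — and the main obstacle — is that this common refinement runs over the infinite tree $\mathbb{T}_n$, so the usual finite argument must be carried out locally by means of the explicit half-space descriptions rather than abstractly; this is precisely the content encapsulated by the cited convexity Lemma, and granting that Lemma the verification is then a matter of careful bookkeeping. Completeness and the face axiom, by contrast, are immediate.
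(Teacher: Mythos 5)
The paper gives no proof of this statement at all: it is quoted verbatim from Reading \cite{R}, so the only question is whether your reconstruction stands on its own. Your preliminary observations are fine: the statement involves only the piecewise-linear maps $\eta_{t_0}^{t}$ and not the {\bf Assumption}; completeness is immediate because the $B$-classes partition $\mathbb{R}^n$; closure under faces is formal; and the branch-by-branch argument showing that $\eta_{t_0}^{t}$ agrees with a single linear map $L_t$ on a $B$-class, hence that the class is cut out by the conditions $\mathrm{sign}(L_tw)=\epsilon_t$, is essentially right (the identification of the closure with the relaxed system does need the short segment argument joining a point of the relaxed set to a point of the nonempty class, which you omit, but that is minor).

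The genuine gap is in the intersection axiom. First, the claim that the sliced domains of linearity form a finite complete fan $\mathcal{D}_t$ is itself the kind of statement that requires an inductive proof (slice a fan on whose maximal cones the map is linear by the preimage of a coordinate hyperplane and verify the intersection axiom for the refined collection); ``agree on common faces by continuity'' is not an argument. Second, and more seriously, the concluding inference ``$C_1\cap C_2$ is a face of each $\mathcal{D}_t$-cone containing it, hence a face of $C_1$ and of $C_2$'' both fails as stated and does not yield what you need: a $B$-cone is an infinite intersection $\bigcap_t Q_t$ with $Q_t\in\mathcal{D}_t$ and is in general not a cone of any single $\mathcal{D}_t$; moreover, if $C_1$ and $C_2$ carry the same sign data along the path from $t_0$ to $t$ (so $Q_t=Q_t'$), their intersection lies inside $Q_t$ with no reason to be a face of it --- for instance two adjacent full-dimensional $B$-cones deep inside one orthant share a ray that is interior to the orthant, which is the relevant $\mathcal{D}_{t_0}$-cone. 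What actually closes the argument is different: since $\mathcal{D}_t$ is a fan, $Q_t\cap Q_t'$ is a face of $Q_t$; since $C_1\subseteq Q_t$, the standard fact that a face of a cone meets any convex subset of that cone in a face of the subset gives that $C_1\cap Q_t'$ is a face of $C_1$; then $C_1\cap C_2=\bigcap_t\bigl(C_1\cap Q_t'\bigr)$ is an (infinite) intersection of faces of $C_1$, hence a face of $C_1$, and symmetrically of $C_2$, after which the extension to faces of $B$-cones is routine. These bridging lemmas are absent from your write-up, so the key step does not go through as written. (Also, the assertion that $\mathcal{MF}(B)$ is ``exactly'' the common refinement of the $\mathcal{D}_t$ is unproved and unnecessary; you only need that every $B$-cone is an intersection of cones, one from each $\mathcal{D}_t$.)
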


Next, following Reading, we show that the $G$-fan is a subfan of the mutation fan.  Note that  we always assume that the {\bf Assumption} holds for $B$ appearing in this section. 
\begin{Definition}
	Two full dimensional cones are adjacent if they have a common face of codimension $1$ and they have disjoint interiors. 
	
	We say two full dimensional cones $C$ and $C'$  in a fan $\mathcal{F}$ are transitively adjacent  if there is a sequence $C=C_0, C_1, \dots, C_k =C'$ of full-dimensional cones in $\mathcal{F}$ such that $C_{i=1}$ and $C_i$ are adjacent for  all  $i\in [1, k]$. The full dimensional cones in $\mathcal{MF}(B)$ that are transitively adjacent to nonnegative cone  $\mathbb{R}^n_{\geq 0}$ in $\mathcal{MF}(B)$ are the maximal  cones of a subfan  $\bar{\mathcal{MF}}(B)$   of   $\mathcal{MF}(B)$ .
	
\end{Definition}

\begin{Theorem}[cf. \cite{R}]\label{gsubfan}
	For a totally sign-skew-symmetric matrix $B$, the following fans are the same:
	\[\bar{\mathcal{MF}}(B) =  \mathcal{F}_{G^{t_0}}.\]
\end{Theorem}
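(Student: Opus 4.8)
The plan is to follow Reading's argument (cf. \cite{R}) adapted to the totally sign-skew-symmetric setting, using the tools developed in this section. The essential point is that both fans are built up from the nonnegative cone $\mathbb{R}^n_{\geq 0}$ by transitive adjacency, so it suffices to check that a single step of ``crossing a codimension-one wall'' produces the same combinatorial structure in both. First I would record the initial-seed compatibility: at $t_0$ the $G$-cone is $\sigma(G_{t_0}) = \mathbb{R}^n_{\geq 0}$, which is also a maximal cone of $\mathcal{MF}(B)$; hence both fans contain $\mathbb{R}^n_{\geq 0}$ as a distinguished maximal cone, and I want to show their maximal cones transitively adjacent to $\mathbb{R}^n_{\geq 0}$ coincide.

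The key structural input is Proposition \ref{eta} together with Theorem \ref{Gfan}: the piecewise-linear maps $\eta_{t_0}^{t}$ carry $\mathcal{F}_{G^{t_0}}$ bijectively to itself, $\eta_{t_0}^{t}(\sigma(G_s^{t_0})) = \sigma(G_s^{t})$, and $\mathcal{F}_{G^{t_0}}$ is a genuine fan. I would argue first the inclusion $\mathcal{F}_{G^{t_0}} \subseteq \bar{\mathcal{MF}}(B)$: each $G$-cone is full-dimensional (since $G_t$ is invertible by Theorem \ref{unim}), and by row sign-coherence of $G$-matrices (Proposition \ref{dualmut}(a)) the vectors $\eta_{t_0}^{t}(w)$ have locally constant sign on the interior $\bar{\sigma}(G_s^{t_0})$ for every $t$, so $\bar\sigma(G_s^{t_0})$ lies inside a single $B$-class; thus each $G$-cone is contained in a $B$-cone of the same dimension, forcing equality, and adjacency of $G$-cones sharing a codimension-one face (which happens exactly for seeds related by a single mutation) gives a chain back to $\sigma(G_{t_0}) = \mathbb{R}^n_{\geq 0}$. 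For the reverse inclusion, I would show that any maximal $B$-cone adjacent to one already known to be a $G$-cone is itself a $G$-cone: crossing the separating hyperplane $\{w_k = 0\}$ corresponds, via $\eta_{t_0}^{t_1}$ and the mutation formula $G_t^{t_1} = (J_k + [\varepsilon_k(G^{t_0}_t)B_{t_0}]^{\cdot k}_+)G^{t_0}_t$ of Proposition \ref{dualmut}(b), precisely to mutating the seed at $k$; so the $B$-cone on the far side of the wall is $\eta_{t_0}^{t_1}$ applied to the $G$-cone on the near side, hence again a $G$-cone. Induction on the length of the adjacency chain then gives $\bar{\mathcal{MF}}(B) \subseteq \mathcal{F}_{G^{t_0}}$.

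The main obstacle I anticipate is the ``no overlap'' / injectivity issue: one must rule out that two distinct $G$-cones share interior points, equivalently that distinct seeds can give $B$-equivalent interior vectors, and dually that a $B$-cone can strictly contain a $G$-cone (so that the $G$-fan would merely refine, not equal, $\bar{\mathcal{MF}}(B)$ near $\mathbb{R}^n_{\geq 0}$). The first is handled by Theorem \ref{Gfan}, which already asserts $\mathcal{F}_{G^{t_0}}$ is a fan with the intersection property, so distinct maximal $G$-cones meet only along proper faces. The second — that $G$-cones are not strictly smaller than the ambient $B$-cones — is the delicate part: it requires showing that whenever two adjacent $G$-cones are separated by the wall $\{w_k=0\}$, a vector $w$ in the relative interior of their common facet is genuinely a sign-change locus for some $\eta_{t_0}^{t}$, i.e.\ that the wall is ``seen'' by the mutation structure. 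This follows because the common facet lies in $\{w_k = 0\}$ and $\eta_{t_0}^{t_1}$ acts by genuinely different linear maps on the two sides (the $k$-th column of $B_{t_0}$ being nonzero on the relevant mutable directions), so $\mathrm{sign}(\eta_{t_0}^{t_1}(w))$ jumps across it; hence no $B$-class straddles the wall, and the $B$-cone cannot be larger than the $G$-cone. Assembling these two directions completes the proof.
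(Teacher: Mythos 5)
Your strategy is the same one the paper intends: the paper's proof is only a pointer to \cite{R}, and your plan (each full-dimensional $G$-cone is a maximal cone of $\mathcal{MF}(B)$, then propagate along adjacency chains from $\mathbb{R}^n_{\geq 0}$) is the right adaptation of Reading's argument using Propositions \ref{eta}, \ref{dualmut} and Theorem \ref{Gfan}. However, two steps need repair. In the reverse inclusion you assert that ``the $B$-cone on the far side of the wall is $\eta_{t_0}^{t_1}$ applied to the $G$-cone on the near side.'' That is not what $\eta_{t_0}^{t_1}$ does: by Proposition \ref{eta} it sends $\sigma(G_s^{t_0})$ to $\sigma(G_s^{t_1})$, i.e.\ it re-expresses the \emph{same} seed's cone in the $t_1$-frame; it does not move a cone to its mutation neighbour inside the fixed fan $\mathcal{F}_{G^{t_0}}$ (also, the separating wall is $\{w_k=0\}$ only in the $t$-frame, not in the $t_0$-frame you work in). The correct step is: the facet of $\sigma(G_t^{t_0})$ obtained by dropping $g_{k,t}^{t_0}$ is also a facet of $\sigma(G_{t'}^{t_0})$ with $t'=\mu_k(t)$, and the two cones lie on opposite sides of its span, since by Proposition \ref{cgmm} only the $k$-th column changes and $g_{k,t'}^{t_0} = -g_{k,t}^{t_0} + (\text{nonnegative combination of the other columns})$, with $G_t^{t_0}$ invertible by Theorem \ref{unim}. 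Since $\sigma(G_{t'}^{t_0})$ is itself a $B$-cone by your first inclusion, any maximal cone of $\mathcal{MF}(B)$ adjacent to $\sigma(G_t^{t_0})$ along that facet must lie on the far side and its interior meets the interior of $\sigma(G_{t'}^{t_0})$, so the fan property of $\mathcal{MF}(B)$ forces equality; this is the induction step.

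Second, your argument that a $B$-cone cannot strictly contain a $G$-cone only excludes a $B$-class straddling a wall shared by two adjacent $G$-cones; when the $G$-fan is not complete, the extra material could a priori lie outside the support of $\mathcal{F}_{G^{t_0}}$, so as written the conclusion does not follow without an additional convexity step (a segment from an interior point exits through some facet, whose far side is locally filled by the mutation-adjacent $G$-cone). It is cleaner, and closer to \cite{R}, to argue directly: if $w'$ is sign-equivalent to a point $w$ in the interior of $\sigma(G_s^{t_0})$, then $\eta_{t_0}^{s}(w)$ lies in the interior of $\sigma(G_s^{s})=\mathbb{R}^n_{\geq 0}$, so sign-equivalence forces $\eta_{t_0}^{s}(w')$ to be strictly positive, whence $w' = \eta_{s}^{t_0}\bigl(\eta_{t_0}^{s}(w')\bigr) \in \sigma(G_s^{t_0})$ by Proposition \ref{eta}. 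Hence the $B$-class of the interior is contained in the $G$-cone and its closure is exactly $\sigma(G_s^{t_0})$; the same computation justifies your unproved assertion that $\mathbb{R}^n_{\geq 0}$ is a maximal cone of $\mathcal{MF}(B)$, and it tightens your phrase ``locally constant sign on the interior'' (what you need, and what row sign-coherence from Proposition \ref{dualmut} under the {\bf Assumption} gives, is that $\eta_{t_0}^{t}$ maps the interior of $\sigma(G_s^{t_0})$ into the interior of $\sigma(G_s^{t})$, which lies in a single open orthant). With these repairs your proof carries out the adaptation of \cite{R} that the paper merely cites.
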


\begin{proof}
	The proof is similar to the proof in \cite{R}.
\end{proof}

\section{Exchange graphs}
Recall that Fomin and Zelevinsky proposed the following conjecture:

\begin{Conjecture}\label{cldseed}
	Every seed is uniquely defined by its cluster; thus, the vertices of the exchange graph can be identified with the clusters, up to a permutation of cluster variables.
	
\end{Conjecture}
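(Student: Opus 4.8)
The plan is to recover the entire seed $\Sigma_t=(\mathbf x_t,\mathbf y_t,B_t)$ from its cluster $\mathbf x_t$ alone, working under the {\bf Assumption}. By a standard reduction (the separation of additions, cf.\ \cite{fomin2007cluster}) it suffices to treat the cluster algebra with principal coefficients, $\mathbb P=\mathrm{trop}(y_i,i\in I)$, so I would assume this throughout. First one reads the $g$-vectors off the cluster: by Theorem~\ref{laurentphen}(2) the $\Z^n$-degree of the Laurent polynomial $x_{i,t}\in\Z[y_j,x_{j,t_0}^{\pm1}:j\in I]$ is $g_{i,t}$. The $g$-vectors of a single seed form a $\Z$-basis (Theorem~\ref{unim}), hence are pairwise distinct, and two cluster variables are equal if and only if their $g$-vectors are equal — this is~(\ref{0099}), a consequence of positivity (Theorem~\ref{pc}) together with Theorem~\ref{Gfan}. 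Hence the unordered cluster $\mathbf x_t$ determines the unordered set of columns of $G_t^{t_0}$, and therefore the maximal simplicial cone $\sigma(G_t^{t_0})$ of $\mathcal F_{G^{t_0}}$.

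Next I would reconstruct the $n$ clusters adjacent to $\mathbf x_t$. The cone $\sigma(G_t^{t_0})$ has exactly $n$ facets, the $k$-th being $\tau_k:=\R_{\geq 0}\{g_{i,t}:i\neq k\}$. Since $\mathcal F_{G^{t_0}}$ is a fan (Theorem~\ref{Gfan}) equal to the subfan $\bar{\mathcal{MF}}(B)$ of the mutation fan $\mathcal{MF}(B)$, which is complete by \cite{R} (Theorem~\ref{gsubfan}), and since $\tau_k$ is a codimension-one cone lying inside the $n$-dimensional cone $\sigma(G_t^{t_0})\in\mathcal{MF}(B)$, there is exactly one maximal cone of $\mathcal F_{G^{t_0}}$ other than $\sigma(G_t^{t_0})$ having $\tau_k$ as a facet; it must be $\sigma(G_{\mu_k(t)}^{t_0})$, because $g_{i,\mu_k(t)}=g_{i,t}$ for $i\neq k$ while $g_{k,\mu_k(t)}\neq g_{k,t}$ (as $x_{k,\mu_k(t)}\neq x_{k,t}$). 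Its unique ray outside $\tau_k$ is $g_{k,\mu_k(t)}$, which by~(\ref{0099}) names the cluster variable $x_{k,\mu_k(t)}$, so $\mathbf x_t$ determines $x_{k,\mu_k(t)}$ for every $k$. Feeding these into the exchange relation, which in the principal case reads
\[
x_{k,t}\,x_{k,\mu_k(t)}=\prod_{j}y_j^{[c_{jk,t}]_+}\prod_{j}x_{j,t}^{[b_{jk,t}]_+}+\prod_{j}y_j^{[-c_{jk,t}]_+}\prod_{j}x_{j,t}^{[-b_{jk,t}]_+},
\]
whose left-hand side is now a known element of $\mathcal F=\mathbb{QP}(x_i,i\in I)$ and whose right-hand side is a sum of two monomials in the $x_{j,t}$ and $y_j$, which are algebraically independent over $\Q$, one recovers the unordered pair consisting of $\big([\mathbf b_{k,t}]_+,[c_{k,t}]_+\big)$ and $\big([-\mathbf b_{k,t}]_+,[-c_{k,t}]_+\big)$, where $\mathbf b_{k,t}$ and $c_{k,t}$ denote the $k$-th columns of $B_t$ and $C_t$ and $[\,\cdot\,]_+$ is applied coordinatewise (when $\mathbf b_{k,t}=0$ one argues instead with the two $y$-monomials).

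The only remaining ambiguity is a sign: the recovered data pin down $(\mathbf b_{k,t},c_{k,t})$ only up to simultaneous negation, and here the {\bf Assumption} is essential. By Proposition~\ref{sdual} one has $\tilde C_t^{t_0}=\big((G_t^{t_0})^{T}\big)^{-1}$, so the sign $\varepsilon_k(\tilde C_t^{t_0})$ of its $k$-th column is computable from $G_t^{t_0}$, hence from $\mathbf x_t$; the {\bf Assumption} gives $\varepsilon_k(C_t^{t_0})=\varepsilon_k(\tilde C_t^{t_0})$, and since $C_t^{t_0}$ is invertible (Theorem~\ref{unim}) the column $c_{k,t}$ is nonzero, so exactly one of $[c_{k,t}]_+$ and $[-c_{k,t}]_+$ vanishes; comparing this with the recovered pair singles out the ``$+$'' member, hence determines $\mathbf b_{k,t}$ and $c_{k,t}$. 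Carrying this out for every $k$ recovers $B_t$, $C_t$, and $\mathbf y_t=\big(\prod_j y_j^{c_{ji,t}}\big)_{i}$, so $\Sigma_t$ is a function of $\mathbf x_t$; consequently the vertices of the exchange graph are identified with the clusters, which is Conjecture~\ref{cldseed}. The point I expect to cost the most care is precisely this sign resolution, since it is where the {\bf Assumption} and Proposition~\ref{sdual} genuinely enter — without them the argument only determines $B_t$ and $C_t$ up to a column-by-column sign. The reconstruction of the neighbours used above — equivalently, the fact that every facet of a maximal $G$-cone bounds exactly one further maximal $G$-cone, which is also the substance of claim~(\ref{011}) — is then routine from Theorems~\ref{Gfan} and~\ref{gsubfan}.
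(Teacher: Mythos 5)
The main gap is the coefficient reduction. The statement, as the paper formulates and proves it (Theorem~\ref{aban}), is for a cluster algebra over an \emph{arbitrary} semifield $\mathbb{P}$, and your opening claim that ``by separation of additions it suffices to treat principal coefficients'' does not hold as stated: the separation formula expresses a cluster variable over $\mathbb{P}$ in terms of its $F$-polynomial and $g$-vector, i.e.\ it goes from principal data to general coefficients, whereas what you need is the converse implication, that an equality of clusters over $\mathbb{P}$ forces equality of the corresponding $g$-vectors (equivalently of the principal-coefficient clusters). That implication is exactly Theorem~\ref{sss} for arbitrary $\mathbb{P}$, which is what the paper invokes, and it is not a formal consequence of the separation formula. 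Moreover your reconstruction is genuinely principal-specific: you read the $g$-vectors as $\mathbb{Z}^n$-degrees, and your recovery of $(\mathbf b_{k,t},c_{k,t})$ from the exchange relation uses that the $y_j$ are algebraically independent from the cluster; over a general semifield (say trivial coefficients) the exchange relation carries no $c$-vector data, so your sign resolution has nothing to compare against, and recovering $\mathbf y_t\in\mathbb{P}$ requires the universal-coefficient formula of Theorem~\ref{sqy} (including the $F$-polynomials), not merely $C_t$. As written, therefore, you only prove the principal-coefficient case.

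Within the principal-coefficient setting your route is essentially sound but is a long detour compared with the paper's argument, with one further imprecision: completeness of the mutation fan is neither inherited by $\mathcal{F}_{G^{t_0}}$ nor needed; what your adjacency step actually uses is the convexity fact that in any fan at most two full-dimensional cones can share a given facet, together with exhibiting the second cone $\sigma(G_{\mu_k(t)}^{t_0})$ explicitly (its distinctness coming from $x_{k,\mu_k(t)}\neq x_{k,t}$ and the $g$-vector bijection). The paper avoids fans and exchange-relation reconstruction altogether: if $\sigma(\mathbf x_{t_0})=\mathbf x_t$, Theorem~\ref{sss} forces $G_t^{t_0}$ to be a permutation matrix $P$ and the $F$-polynomials to be $1$; Propositions~\ref{sdual} and~\ref{dualmut} then give $\tilde C_t^{t_0}=C_t^{t_0}=P$; the first duality (Proposition~\ref{fdual}) yields $B_t=P^TB_{t_0}P=\sigma(B_{t_0})$; and Theorem~\ref{sqy} yields $\mathbf y_t=\sigma(\mathbf y_{t_0})$ over any $\mathbb{P}$. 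Your argument can be repaired along the same lines: replace the opening ``reduction'' by an appeal to Theorem~\ref{sss} over $\mathbb{P}$ to pass from the cluster to the $G$-matrix, and finish the coefficient recovery with Theorem~\ref{sqy} rather than with the principal formula $y_{i,t}=\prod_j y_j^{c_{ji,t}}$.
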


\begin{Conjecture}\label{common}
	Two clusters are adjacent in the exchange graph if and only if they have exactly $n- 1$ common cluster variables.
\end{Conjecture}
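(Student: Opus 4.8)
I would split the statement into its two implications. For the forward direction, suppose $t\frac{k}{\quad\,}t'$. Working with $g$-vectors relative to any fixed initial vertex $t_0$, Proposition \ref{cgmm} gives $G^{t_0}_{t'}=G^{t_0}_t(J_k+[-\varepsilon B_t]_+^{\cdot k})$, whose $j$-th column equals $g^{t_0}_{j,t}$ for $j\neq k$, while its $k$-th column equals $-g^{t_0}_{k,t}+\sum_{j\neq k}w_j\,g^{t_0}_{j,t}$ with all $w_j\geq 0$ (the diagonal entry of $B_t$ being zero). By the linear independence of $g^{t_0}_{1,t},\dots,g^{t_0}_{n,t}$ (Theorem \ref{unim}) the $k$-th column cannot equal $g^{t_0}_{k,t}$. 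Hence by the $g$-vector bijection (\ref{0099}) we get $x_{j,t'}=x_{j,t}$ for $j\neq k$ and $x_{k,t'}\neq x_{k,t}$; since the $g$-vectors within one seed are distinct, the $n$ cluster variables of a seed are pairwise distinct, so $\mathbf{x}_t$ and $\mathbf{x}_{t'}$ have exactly $n-1$ common cluster variables.

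For the converse, suppose $\mathbf{x}_t$ and $\mathbf{x}_{t'}$ share $n-1$ cluster variables. After relabeling I may assume $x_{i,t}=x_{i,t'}$ for $i\in[1,n-1]$ and $x_{n,t}\neq x_{n,t'}$, and --- since the {\bf Assumption} is preserved under mutation of the initial exchange matrix --- I may re-root the pattern at $t$, so that $G^{t}_t=I_n$ and $\sigma(G^t_t)=\mathbb{R}^n_{\geq 0}$. Then $g^t_{i,t'}=g^t_{i,t}=\mathbf{e}_i$ for $i\in[1,n-1]$ by (\ref{0099}); write $h':=g^t_{n,t'}$, so $G^t_{t'}=[\mathbf{e}_1\,|\,\cdots\,|\,\mathbf{e}_{n-1}\,|\,h']$. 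Since $G^t_{t'}$ is invertible with determinant $\pm 1$ (Theorem \ref{unim}), $h'_n=\pm 1$; and since $G^t_{t'}$ is row sign-coherent (Proposition \ref{dualmut}(a)) and its $i$-th row contains a $+1$ for $i\leq n-1$, we get $h'_i\geq 0$ for $i\leq n-1$. If $h'_n=1$ then $h'\geq 0$, so $\sigma(G^t_{t'})\subseteq\mathbb{R}^n_{\geq 0}=\sigma(G^t_t)$; by the fan axiom (Theorem \ref{Gfan}) $\sigma(G^t_{t'})$ is then a face of $\mathbb{R}^n_{\geq 0}$, and being full-dimensional it equals $\mathbb{R}^n_{\geq 0}$, forcing $h'=\mathbf{e}_n=g^t_{n,t}$ and hence $x_{n,t'}=x_{n,t}$ by (\ref{0099}) --- a contradiction. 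So $h'_n=-1$.

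Now mutate $\Sigma_t$ at $n$, giving an edge $t\frac{n}{\quad\,}t''$; Proposition \ref{cgmm} gives $g^t_{i,t''}=\mathbf{e}_i$ for $i\in[1,n-1]$ and $h'':=g^t_{n,t''}=-\mathbf{e}_n+\sum_{j<n}[-b_{jn,t}]_+\mathbf{e}_j$, so $h''_n=-1$ and $h''_i\geq 0$ for $i<n$. Thus $\sigma(G^t_{t'})$ and $\sigma(G^t_{t''})$ are full-dimensional simplicial cones, both having the codimension-one cone $\tau:=\mathbb{R}_{\geq 0}\mathbf{e}_1+\dots+\mathbb{R}_{\geq 0}\mathbf{e}_{n-1}$ as a facet and both contained in the half-space $\{x_n\leq 0\}$. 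A direct check shows that, for $p:=\mathbf{e}_1+\dots+\mathbf{e}_{n-1}$ in the relative interior of $\tau$ and $\delta>0$ small, the point $p-\delta\mathbf{e}_n$ lies in the interior of each of these two cones; since distinct cones of a fan have disjoint interiors (Theorem \ref{Gfan}), $\sigma(G^t_{t'})=\sigma(G^t_{t''})$. Comparing primitive ray generators, $h'=h''$, hence $g^t_{i,t'}=g^t_{i,t''}$ for all $i$, hence $\mathbf{x}_{t'}=\mathbf{x}_{t''}$ by (\ref{0099}), hence $\Sigma_{t'}=\Sigma_{t''}$ as vertices of the exchange graph by Conjecture \ref{cldseed} (which holds under the {\bf Assumption}). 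Since $t\frac{n}{\quad\,}t''$ and $\mathbf{x}_t\neq\mathbf{x}_{t'}$, the clusters $\mathbf{x}_t$ and $\mathbf{x}_{t'}=\mathbf{x}_{t''}$ are adjacent in the exchange graph.

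The real weight of the argument sits in the inputs that become available only under the {\bf Assumption}: the row sign-coherence of $G$-matrices (Proposition \ref{dualmut}), the fan property of $\mathcal{F}_{G^{t_0}}$ (Theorem \ref{Gfan}), the $g$-vector bijection (\ref{0099}), and Conjecture \ref{cldseed} --- exactly the statements whose transfer from the skew-symmetrizable to the totally sign-skew-symmetric setting is the difficult part of the paper. Granting them, the one place that calls for care is making sure the two candidate cones $\sigma(G^t_{t'})$ and $\sigma(G^t_{t''})$ are correctly seen to share the facet $\tau$ and to lie on the same side of its spanning hyperplane, which is what the re-rooting and the sign computation for $h'_n$ accomplish; equivalently, the crux can be phrased as the statement that $n-1$ cluster variables lying in a common cluster can be completed to a cluster in at most two ways, the analogue of the corresponding step in the skew-symmetrizable proofs of Gekhtman--Shapiro--Vainshtein and Cao--Li.
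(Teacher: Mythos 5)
Your argument is correct, but it is not the route the paper takes. In the paper this statement is Theorem \ref{aban}(2), and its entire proof is the observation that it is a corollary of Theorem \ref{aban}(1) (Conjecture \ref{cldseed}), invoking the known implication of Gekhtman--Shapiro--Vainshtein \cite{GSV} that ``seed determined by cluster'' yields the adjacency criterion; the paper's real work goes into (1), via the $g$-vector bijection (Theorem \ref{sss}), the dualities of Proposition \ref{dualmut}, and the universal-coefficient formula. You instead prove both implications directly inside the paper's toolkit: the forward direction from Proposition \ref{cgmm} plus linear independence of $g$-vectors (Theorem \ref{unim}), and the converse by re-rooting at $t$ and showing, via row sign-coherence (Proposition \ref{dualmut}(a)), unimodularity, and the fan property (Theorem \ref{Gfan}), that an almost-complete cluster has at most two completions, using Conjecture \ref{cldseed} only at the last step to pass from equal clusters to equal vertices of the exchange graph. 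What your route buys is a self-contained, geometric proof of the nontrivial converse that does not outsource the implication to \cite{GSV}; what it costs is more bookkeeping, and two small points you should state carefully: the relabeling step should be phrased so that you only permute the columns of $G^t_{t'}$ (which leaves $\sigma(G^t_{t'})$ unchanged) rather than pretending a relabeled seed sits on $\mathbb{T}_n$, and the non-shared index at $t$ should be an arbitrary $k$ rather than $n$; also the re-rooting is legitimate not because the {\bf Assumption} is ``preserved under mutation'' but because, as stated in Section \ref{sec32}, it already quantifies over arbitrary initial vertices. With those wordings fixed, your proof stands, and its crux --- the ``at most two completions'' statement --- is exactly the content that the cited GSV reduction encapsulates.
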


\begin{Conjecture}\label{cdeter}
	Let $\bar{\mathcal{A}}$ be the cluster algebra with principal coefficients. Then each cluster  $(\bar{\mathbf x}_t, \bar{\mathbf y}_t, B_t)$ is  uniquely determined by the corresponding $C$-matrix $C_t$.
\end{Conjecture}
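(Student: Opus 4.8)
The plan is to reduce the statement to two facts that will already be available: the bijection between cluster variables and $g$-vectors (which follows from positivity \cite{LP}, the tropical dualities of Propositions \ref{sdual} and \ref{dualmut}, and Theorem \ref{Gfan}, exactly as in \cite{N}), and the observation that $-B^{T}$ also satisfies the \textbf{Assumption} (the \textbf{Assumption} is symmetric under $B\leftrightarrow -B^{T}$), so that the whole machinery of this section applies equally to $\mathcal{A}(-B^{T})$. It suffices to show that $C_{t}^{t_{0}} = C_{t'}^{t_{0}}$ forces the seeds $(\bar{\mathbf x}_{t},\bar{\mathbf y}_{t},B_{t})$ and $(\bar{\mathbf x}_{t'},\bar{\mathbf y}_{t'},B_{t'})$ to coincide. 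The coefficient tuples are immediate, since in the principal case $y_{j,t} = \prod_{i} y_{i}^{c_{ij,t}}$, so $\bar{\mathbf y}_{t}$ is just the list of columns of $C_{t}^{t_{0}}$ and hence $\bar{\mathbf y}_{t} = \bar{\mathbf y}_{t'}$.

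The heart of the argument is to recover the $G$-matrix from the $C$-matrix inside the same pattern, which the dualities above do not do directly. I would go through the matrix $-B^{T}$. Applying Proposition \ref{sdual} with $B$ replaced by $-B^{T}$ (so that the role of $\tilde C$ is played by the $C$-matrix of $-(-B^{T})^{T}=B$) gives $(\tilde G_{t}^{t_{0}})^{T}C_{t}^{t_{0}} = I_{n}$; since $|C_{t}^{t_{0}}|\in\{1,-1\}$ by Theorem \ref{unim}, this pins down $\tilde G_{t}^{t_{0}} = (C_{t}^{t_{0}})^{-T}$, so $C_{t}^{t_{0}} = C_{t'}^{t_{0}}$ implies $\tilde G_{t}^{t_{0}} = \tilde G_{t'}^{t_{0}}$. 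In the principal-coefficient cluster algebra $\mathcal{A}(-B^{T})$, equality of these $G$-matrices together with the cluster-variable/$g$-vector bijection shows that $t$ and $t'$ carry the same cluster; then Conjecture \ref{cldseed} for $-B^{T}$ (which, being proved earlier for any totally sign-skew-symmetric matrix satisfying the \textbf{Assumption}, e.g. in Theorem \ref{aban}, applies to $-B^{T}$) gives equality of the full seeds of $\mathcal{A}(-B^{T})$ at $t$ and $t'$, in particular $\tilde C_{t}^{t_{0}} = \tilde C_{t'}^{t_{0}}$. Feeding this back into Proposition \ref{sdual} for $B$, namely $(G_{t}^{t_{0}})^{T}\tilde C_{t}^{t_{0}} = I_{n}$, yields $G_{t}^{t_{0}} = (\tilde C_{t}^{t_{0}})^{-T} = G_{t'}^{t_{0}}$.

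The remaining steps are routine. From $G_{t}^{t_{0}} = G_{t'}^{t_{0}}$ and the cluster-variable/$g$-vector bijection for $\mathcal{A}(B)$ we obtain $\bar{\mathbf x}_{t} = \bar{\mathbf x}_{t'}$. For the exchange matrix, the first duality (Proposition \ref{fdual}) gives $G_{t}^{t_{0}}B_{t} = B_{t_{0}}C_{t}^{t_{0}}$, and invertibility of $G_{t}^{t_{0}}$ (Theorem \ref{unim}) yields $B_{t} = (G_{t}^{t_{0}})^{-1}B_{t_{0}}C_{t}^{t_{0}}$, which depends only on $G_{t}^{t_{0}}$ and $C_{t}^{t_{0}}$; hence $B_{t} = B_{t'}$. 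This completes the identification of the seeds.

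I expect the middle step to be the only real obstacle: recovering $G_{t}$ from $C_{t}$ within one pattern. In the skew-symmetrizable case this is a formal consequence of Nakanishi and Zelevinsky's tropical duality, which there couples $C_{t}^{t_{0}}$ and $G_{t}^{t_{0}}$ directly; in our setting Proposition \ref{sdual} only couples the $G$-matrix of $B$ with the $C$-matrix of $-B^{T}$, so the argument is forced to detour through the pattern of $-B^{T}$, relying on $-B^{T}$ satisfying the \textbf{Assumption} and on Conjecture \ref{cldseed} for it. One should check that this introduces no circularity, i.e. that Conjecture \ref{cldseed} (Theorem \ref{aban}) is proved from the $G$-fan and the cluster-variable/$g$-vector bijection and does not itself invoke Conjecture \ref{cdeter}.
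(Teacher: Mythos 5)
Your proposal is correct and follows essentially the same route as the paper's own proof of Theorem \ref{suiyi}: pass to the pattern of $-B^{T}$ via Proposition \ref{sdual} to get $\tilde G_t=\tilde G_{t'}$ from $C_t=C_{t'}$, use the $g$-vector/cluster-variable bijection (Theorem \ref{sss}) and Theorem \ref{aban} for $-B^{T}$ to conclude $\tilde C_t=\tilde C_{t'}$, and then return via Proposition \ref{sdual} to recover $G_t=G_{t'}$ and hence the full seed. Your extra remarks (recovering $B_t$ from the first duality, $\bar{\mathbf y}_t$ directly from $C_t$, and the non-circularity of Theorem \ref{aban}) only make explicit details the paper leaves implicit.
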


These two conjectures were proved by Gekhtman, Shapiro, and  Vainshtein for the skew-symmetrizable cluster algebras of geometric type \cite{GSV}, and they had shown that if Conjecture  \ref{cldseed} holds, then the Conjecture \ref{common} holds. Cao and Li proved Conjecture  \ref{cldseed} for generalized cluster algebras with any coefficients \cite{CL}.
Cao, Huang and Li proved Conjecture \ref{cdeter} for the skew-symmetrizable cluster algebras \cite{CHL2}. In this section, we prove these three conjectures for cluster algebras whose exchange matrices satisfying the {\bf Assumption} and the positivity conjecture. These conditions  hold for  acyclic sign-skew-symmetric cluster algebras. We also give a proof of Conjecture \ref{cldseed} and Conjecture \ref{common} for cluster algebras of odd rank and with indecomposable exchange matrix without the {\bf Assumption} condition.

The following theorem is powerful, since it establishes the one-to-one  correspondence between $g$-vectors and cluster variables with any coefficients. 
\begin{Theorem}\label{sss}
	Let $\mathcal{A}$ be a totally sign-skew-symmetric cluster algebra with coefficients coming from an semifield $\mathbb{P}$ satisfying the {\bf Assumption}. Then  for any $t_0 \in \mathbb{T}_n$, we have the following bijective  map
	\[ \begin{split}
		\{\text{$g$-vectors}\,\, g_{i,t}^{t_0},  \,\, i\in [1,n],\,\, t\in\mathbb{T}_n          \} & \longrightarrow   \{\text{cluster variables $x_{i,t} $  in $\mathcal{A} $ }, \,\, i\in [1,n],\,\, t\in\mathbb{T}_n   \}     \\
		g_{i,t}^{t_0}    &\mapsto x_{i,t}.
	\end{split}   \]
\end{Theorem}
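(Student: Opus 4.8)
The plan is to establish the bijection by exhibiting an inverse map, using the machinery assembled in Section \ref{pg}. The well-definedness and surjectivity of $g_{i,t}^{t_0}\mapsto x_{i,t}$ are built into the setup: every cluster variable of $\mathcal A$ arises as some $x_{i,t}$, and its $g$-vector (read off from the principal-coefficient model) is well-defined, so the map is onto. The real content is injectivity, i.e.\ that distinct $g$-vectors in the list correspond to distinct cluster variables, and more precisely that $g_{i,t}^{t_0}=g_{j,s}^{t_0}$ forces $x_{i,t}=x_{j,s}$. The standard route, due to Nakanishi (and ultimately going back to \cite{fomin2007cluster}), is: (1) reduce the general-coefficient statement to the principal-coefficient case via separation-of-additions; (2) in the principal-coefficient case, show that a cluster variable is determined by the pair $(g\text{-vector},\ F\text{-polynomial})$; (3) show the $F$-polynomial is in turn recoverable from the $g$-vector. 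I would carry these out in that order.

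For step (1), recall that in an arbitrary semifield $\mathbb P$ the cluster variable $x_{i,t}$ can be written, by the separation formula, as $x_{i,t}=\bigl(\prod_j x_j^{g_{ji,t}}\bigr)\cdot\dfrac{F_{i,t}\big|_{\mathcal F}(\hat y)}{F_{i,t}\big|_{\mathbb P}(y)}$, where $\hat y_j=\prod_\ell x_\ell^{b_{\ell j}}y_j$; this identity and the $g$-vectors/$F$-polynomials involved are exactly those attached to the principal-coefficient algebra $\mathcal A(B)$ at $t_0$. The proof of the separation formula in \cite{fomin2007cluster} uses only the Laurent phenomenon and the mutation rules, both available here (Theorem \ref{laurentphen}, and the $F$- and $C$-mutation rules recalled in Section \ref{sec2}), so it transfers verbatim to the totally sign-skew-symmetric case. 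Hence two cluster variables with equal $g$-vector and equal $F$-polynomial (in the principal model) must coincide in $\mathcal A$, and it suffices to prove the bijection for $\mathcal A(B)$ itself.

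For steps (2) and (3), I would argue as in \cite{N}: the $g$-vector $g_{i,t}^{t_0}$, being the degree vector under the $\mathbb Z^n$-grading of Theorem \ref{laurentphen}, determines the \emph{leading} Laurent monomial $\prod_j x_j^{g_{ji,t}}$ of $x_{i,t}$ in the cluster at $t_0$; then $F_{i,t}$ is reconstructed from $x_{i,t}$ by specializing $x_j\mapsto1$, while conversely $x_{i,t}=\bigl(\prod_j x_j^{g_{ji,t}}\bigr)F_{i,t}(\hat y)$, so the pair $(g\text{-vector}, F\text{-polynomial})$ and the cluster variable determine each other. It remains to see that $g$-vectors in distinct seeds that happen to be equal force the $F$-polynomials equal. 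This is where the results of Section \ref{pg} under the {\bf Assumption} are essential: by Theorem \ref{unim} the $g$-vectors of any seed form a $\mathbb Z$-basis, and by Proposition \ref{dualmut}(a) the duality $C_t^{t_0}=(\bar G_{t_0}^t)^T$ together with $(G_t^{t_0})^T\tilde C_t^{t_0}=I_n$ (Proposition \ref{sdual}) lets one recover the $c$-vectors and hence, inductively along paths in $\mathbb T_n$, the whole seed data attached to a $g$-vector; combined with the $G$-fan being a genuine fan (Theorem \ref{Gfan}), a $g$-vector lies in the interior of a unique maximal $G$-cone, which pins down the seed up to the permutation that the statement allows, and hence pins down $F_{i,t}$. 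Assembling these gives injectivity.

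The main obstacle, and the only place the {\bf Assumption} and the positivity conjecture (Theorem \ref{pc}, valid here for acyclic $B$) are genuinely used, is precisely the last reconstruction: in the skew-symmetrizable case one has all of tropical duality for free, whereas here one must lean on Proposition \ref{sdual}, Proposition \ref{dualmut}, and Theorem \ref{Gfan}—each of which was proved only \emph{under the {\bf Assumption}}—to guarantee that the $G$-fan is well-behaved enough that a $g$-vector determines its cone and thus its seed. I expect the write-up of step (2)/(3) to be the delicate part; steps (1) and the surjectivity are essentially bookkeeping.
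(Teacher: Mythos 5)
Your steps (1) and (2) (reduction to principal coefficients by the separation formula, and the fact that a cluster variable is equivalent to the pair consisting of its $g$-vector and its $F$-polynomial) are fine, but step (3), which is the entire content of the theorem, has a genuine gap. A $g$-vector $g_{i,t}^{t_0}$ is a ray generator of $\sigma(G_t^{t_0})$: it lies on a one-dimensional face of that cone, not in the interior of a maximal $G$-cone, and the same ray is a face of every $G$-cone coming from a seed containing the corresponding cluster variable (generically infinitely many seeds). So the assertion that a $g$-vector ``lies in the interior of a unique maximal $G$-cone, which pins down the seed up to permutation'' is false, and Theorem \ref{Gfan} cannot carry the weight you put on it: the fan property only says two $G$-cones meet along a common face; it does not say that the cluster variables attached to a shared ray in two different seeds coincide. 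That identification is exactly what is being proved --- it is Corollary \ref{ssss}, which the paper deduces \emph{from} Theorem \ref{sss}, not the other way around --- so your step (3) is circular at the decisive point, and the implication $g_{i,t}^{t_0}=g_{j,s}^{t_0}\Rightarrow F_{i,t}=F_{j,s}$ is never actually established. The vague appeal to ``recovering the $c$-vectors inductively along paths'' via Propositions \ref{sdual} and \ref{dualmut} does not fill this hole.

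For comparison, the paper's own proof is a direct reference to [Theorem 8.2, \cite{N}], and the argument there differs from yours precisely at this step. One uses the row sign coherence of $G$-matrices and the dual mutation rule (Proposition \ref{dualmut}) to see that the change of initial vertex acts on $g$-vectors by the piecewise-linear maps $\eta$, which depend only on the vector itself; hence $g_{i,t}^{t_0}=g_{j,s}^{t_0}$ for one initial vertex forces $g_{j,s}^{t}=g_{i,t}^{t}=e_i$. Writing $x_{j,s}$ via the separation formula with initial vertex $t$ and, symmetrically, $x_{i,t}$ with initial vertex $s$, one obtains expressions of the form $x_{j,s}=x_{i,t}\,F_{j,s}^{t}(\hat y)$ and $x_{i,t}=x_{j,s}\,F_{i,t}^{s}(\hat y')$, and then Laurent positivity (Theorem \ref{pc} in the acyclic case) together with the constant term $1$ of $F$-polynomials forces both $F$-polynomials to be trivial, so $x_{i,t}=x_{j,s}$. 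If you replace your fan-based step (3) by this synchronization-plus-positivity argument, the remainder of your outline goes through; as written, however, the proposal does not prove the theorem.
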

\begin{proof}
	
See the proof of [Theorem 8.2, \cite{N}] for the details.
\end{proof}

Notice that Theorem \ref{sss} was also proved in full generality for cluster algebras with principal coefficients by  Li and Pan in their recent paper \cite{LP}.

\begin{Definition}
	Let $\mathcal{A}$ be a totally sign-skew-symmetric cluster algebra with coefficients coming from an arbitrary semifield $\mathbb{P}$. The cluster complex  $\mathcal{C}(\mathcal{A})$ is the simplicial complex whose vertices are cluster variables and whose simplicies are non-empty subsets of clusters.

\end{Definition}

We have the following results:

\begin{Corollary}\label{ssss}
	Let $\mathcal{A}$ be a totally sign-skew-symmetric cluster algebra with coefficients coming from an arbitrary semifield $\mathbb{P}$. Then for any $t_0\in \mathbb{T}_n$, we have an isomorphism of simplicial complices:
	\[\begin{split}  \mathcal{C}(\mathcal{A})  &\longrightarrow \mathcal{F}_{G^{t_0}}\\
		x_{i,t}   & \leftrightarrow g_{i,t}^{t_0}. 
	\end{split}\]
	Here $\mathcal{F}_{G^{t_0}}$ is the corresponding $G$-fan.
\end{Corollary}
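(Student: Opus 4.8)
The plan is to promote the bijection of Theorem~\ref{sss} to an isomorphism of simplicial complexes. The first observation is structural: by Theorem~\ref{unim} the columns $g_{1,t}^{t_0},\dots,g_{n,t}^{t_0}$ of every $G$-matrix are linearly independent, so each $G$-cone $\sigma(G_t^{t_0})$ is simplicial, and therefore its faces are exactly the cones $\mathbb{R}_{\geq 0}\{\,g_{i,t}^{t_0} : i\in S\,\}$ for subsets $S\subseteq[1,n]$. Hence every cone of $\mathcal{F}_{G^{t_0}}$ is of this form for some vertex $t\in\mathbb{T}_n$ and some $S$, and the abstract simplicial complex underlying $\mathcal{F}_{G^{t_0}}$ has the $g$-vectors $g_{i,t}^{t_0}$ as its vertices (extreme rays).

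Next I would define the map. A simplex of $\mathcal{C}(\mathcal{A})$ is a nonempty subset $\{\,x_{i,t} : i\in S\,\}$ of a cluster $\mathbf{x}_t$; send it to $\mathbb{R}_{\geq 0}\{\,g_{i,t}^{t_0} : i\in S\,\}$, and in particular $x_{i,t}\mapsto g_{i,t}^{t_0}$ on vertices. Well-definedness requires independence of the chosen seed: if $\{\,x_{i,t} : i\in S\,\}=\{\,x_{j,t'} : j\in S'\,\}$ then, since Theorem~\ref{sss} exhibits $g_{i,t}^{t_0}\mapsto x_{i,t}$ as a \emph{bijection} (so $g_{i,t}^{t_0}=g_{j,t'}^{t_0}\iff x_{i,t}=x_{j,t'}$), we get $\{\,g_{i,t}^{t_0} : i\in S\,\}=\{\,g_{j,t'}^{t_0} : j\in S'\,\}$ as sets of vectors, hence equal cones. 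Surjectivity onto the cones of $\mathcal{F}_{G^{t_0}}$ is the structural remark above, and inclusion of simplices inside a fixed cluster clearly corresponds to face inclusion within $\sigma(G_t^{t_0})$.

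To finish one must check injectivity and that the face relation is matched \emph{globally}, not just inside one $G$-cone. Here I would use that $\mathcal{F}_{G^{t_0}}$ is a genuine fan (Theorem~\ref{Gfan}): a cone of a fan has a well-defined set of extreme rays, so two descriptions $\mathbb{R}_{\geq 0}\{g_{i,t}^{t_0}:i\in S\}=\mathbb{R}_{\geq 0}\{g_{j,t'}^{t_0}:j\in S'\}$ of the same cone must involve the same rays; combined with Theorem~\ref{sss} this forces $\{x_{i,t}:i\in S\}=\{x_{j,t'}:j\in S'\}$, giving injectivity and well-definedness of the inverse. The transport maps $\eta_{t_0}^{t}$ of Proposition~\ref{eta}, which send $g_{i,t'}^{t_0}\mapsto g_{i,t'}^{t}$ and preserve intersections and inclusions of cones, then allow one to move between seeds and conclude that an arbitrary containment of cones in $\mathcal{F}_{G^{t_0}}$ is reflected by a containment of simplices in $\mathcal{C}(\mathcal{A})$.

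I expect the main obstacle to be precisely this global matching of faces: one must rule out that $g$-vectors of distinct cluster variables lie on a common ray (so that the correspondence ``face of the $G$-fan'' $\leftrightarrow$ ``subset of a cluster'' is a bijection and not merely a surjection), and verify that a lower-dimensional cone occurring as a common face of $G$-cones from two different seeds has the same expression as a span of $g$-vectors in both. Both points are controlled by the fan property of Theorem~\ref{Gfan} together with the bijectivity in Theorem~\ref{sss}, after which the map constructed above is the asserted isomorphism of simplicial complexes, recorded on vertices by $x_{i,t}\leftrightarrow g_{i,t}^{t_0}$.
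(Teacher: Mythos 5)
Your argument is correct and is essentially the route the paper intends: the paper states this Corollary without a written proof, treating it as an immediate consequence of the bijection of Theorem \ref{sss} together with the $G$-fan results (Theorem \ref{unim} for simpliciality of the $G$-cones and Theorem \ref{Gfan} for the fan property), which is exactly what you assemble. One small correction to your last paragraph: ruling out that two distinct $g$-vectors lie on a common ray is not delivered by the fan property itself but by Theorem \ref{unim} — each $g$-vector is a column of a unimodular integer matrix, hence a primitive integer vector, and two primitive integer vectors spanning the same ray coincide; with that observation your map is well defined and bijective on vertices and on simplices, completing the proof.
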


Before we prove the main results, let us make some preparations. 
Recall the following formula for the universal coefficients for cluster algebras with totally sign-skew-symmetric matrix.

\begin{Theorem}[\cite{fomin2007cluster}]\label{sqy}
	Let $\mathcal{A}$ be a totally sign-skew-symmetric cluster algebra with coefficients coming from the universal semifield $\mathbb{P} = \mathbb{Q}_{sf}(Y_1,\dots,Y_n)$.  Then we have that  for any $j\in [1,n]$ and $t\in \mathbb{T}_n$,
	\[Y_{j,t}   = (\prod_{i=1}^n Y_i^{c_{ij,t}} )  \prod_{i=1}^n F_{i,t}(\mathbf Y)^{b_{ij,t}},\]
	where $F_{i,t}$ are $F$-polynomials.
	
\end{Theorem}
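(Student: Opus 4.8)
The plan is to follow the proof of the ``separation of additions'' for $y$-variables in \cite{fomin2007cluster} and to check that every ingredient it uses is already available in the totally sign-skew-symmetric setting. I would argue by induction on the distance $d(t_0,t)$ in $\mathbb{T}_n$, working throughout in the universal semifield $\mathbb{Q}_{sf}(Y_1,\dots,Y_n)$, where $\oplus$ is ordinary addition of subtraction-free rational expressions, so that $F_{i,t}(\mathbf Y)$ really is the honest substitution $Y_i$ for the $i$-th variable of the polynomial $F_{i,t}$. The inputs are: the Laurent phenomenon (Theorem \ref{laurentphen}), which makes the $F$-polynomials well defined and shows they have constant term $1$; the mutation rules for $y$-variables, for $B_t$, for $C_t$ and for the $F$-polynomials recalled in Section \ref{sec2}; and the column sign-coherence of $C$-matrices, known in this generality by \cite{LP}. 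None of these uses a symmetrizer, which is why the statement holds for all totally sign-skew-symmetric $B$.

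For the base case $t=t_0$ one has $C_{t_0}=I_n$, $F_{i,t_0}=1$ and $B_{t_0}=B$, so the right-hand side of the asserted formula is $\prod_i Y_i^{\delta_{ij}}=Y_j=Y_{j,t_0}$. For the inductive step let $t\frac{k}{\quad}t'$ be an edge with the formula known at $t$. When $j=k$ the claim follows immediately from the inductive hypothesis: $Y_{k,t'}=Y_{k,t}^{-1}$, the $k$-th columns of $C_{t'}$ and of $B_{t'}$ are the negatives of those of $C_t$ and $B_t$, and $F_{k,t'}$ --- the only $F$-polynomial that changes --- occurs only to the power $b_{kk,t'}=0$.

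The essential case is $j\neq k$, where $Y_{j,t'}=Y_{j,t}\,Y_{k,t}^{[b_{kj,t}]_+}\,(Y_{k,t}\oplus 1)^{-b_{kj,t}}$ and $Y_{k,t}\oplus 1=Y_{k,t}+1$. The key step is to rewrite $Y_{k,t}+1$ in terms of $F$-polynomials: combining the $F$-polynomial mutation rule with the inductive hypothesis for $Y_{k,t}$ and with the sign-coherence of the $k$-th column of $C_t$, one gets an identity of the form $F_{k,t}\,F_{k,t'}=M\cdot(Y_{k,t}+1)$ for an explicit Laurent monomial $M$ in the $Y_i$ and the $F_{i,t}$. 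Plugging this in, together with $F_{i,t'}=F_{i,t}$ for $i\neq k$ and $b_{kj,t'}=-b_{kj,t}$, the $(Y_{k,t}+1)^{-b_{kj,t}}$ factors cancel, and the identity to be proved collapses to a monomial identity in the $Y_i$ and the $F_{i,t}$; there, matching the exponent of each $Y_i$ is exactly the $C$-matrix mutation rule and matching the exponent of each $F_{i,t}$ is exactly the $B$-matrix mutation rule.

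I expect no conceptual obstacle: the only non-elementary facts used are the Laurent phenomenon and the column sign-coherence of $C$-matrices, both already established for totally sign-skew-symmetric cluster algebras. The real work is bookkeeping --- producing the monomial $M$ and carrying out the final exponent comparison both require splitting into the cases $\varepsilon_k(C_t)=+1$ and $\varepsilon_k(C_t)=-1$ and, inside each, tracking the sign of $b_{jk,t}$ against the $[\,\cdot\,]_+$ and $\mathrm{sign}(\cdot)$ appearing in the $B$- and $C$-mutation formulas. This is routine, so the cleanest write-up may simply cite \cite{fomin2007cluster} and remark that its proof goes through verbatim.
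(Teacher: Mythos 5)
Your proposal is correct and takes essentially the same route as the paper, which simply cites \cite{fomin2007cluster} for this statement: the Fomin--Zelevinsky induction uses only the mutation recurrences for the $Y$-variables, $B$-matrices, $C$-matrices and $F$-polynomials together with the Laurent phenomenon, none of which involves a symmetrizer, so it carries over verbatim to the totally sign-skew-symmetric case. One small correction: neither column sign-coherence of the $C$-matrices nor the constant-term-$1$ property of the $F$-polynomials is actually needed --- the identity $F_{k,t}F_{k,t'}=M\,(Y_{k,t}+1)$ with $M=\prod_i Y_i^{[-c_{ik,t}]_+}\prod_i F_{i,t}^{[-b_{ik,t}]_+}$ follows from the $F$-polynomial recurrence alone, which is why the original argument predates the sign-coherence theorem.
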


For a permutation $\sigma \in S_n$ and a seed $(\mathbf x_t, \mathbf y_t, B_t)$,  the action of $\sigma$ on the seed $(\mathbf x_t, \mathbf y_t, B_t)$,  is denoted by $\sigma(\mathbf x_t, \mathbf y_t, B_t) $ and given by 
\[\sigma(\mathbf x_t) := (x_{\sigma(1),t}, \dots, x_{\sigma(n),t}),\,\sigma(\mathbf y_t) := (y_{\sigma(1),t}, \dots, y_{\sigma(n),t}) , \sigma(B_t) := (b_{\sigma(i)\sigma(j),t}). \]

\begin{Theorem}\label{aban}
	Let $\mathcal{A}$ be a totally sign-skew-symmetric cluster algebra with coefficients coming from an semifield $\mathbb{P}$. Suppose that   the {\bf Assumption} holds for the 
	exchange matrix of $\mathcal{A}$. 
	Then we have that:
	\begin{enumerate}
		\item Every seed is uniquely defined by its cluster, i.e., if there is a permutation $\sigma\in S_n$ such that $\sigma(\mathbf x_{t'}) = \mathbf x_{t}$ for some $t, t'\in \mathbb{T}_n$, then \[    (\mathbf x_t, \mathbf y_t, B_t) =  \sigma(\mathbf x_{t'}, \mathbf y_{t'}, B_{t'}).                 \]
		\item  Two clusters are adjacent in the exchange graph if and only if they have exactly $n- 1$ common cluster variables.
	\end{enumerate}
\end{Theorem}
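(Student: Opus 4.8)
The plan is to deduce both statements from the bijection between $g$-vectors and cluster variables (Theorem \ref{sss}) together with the fact that $g$-vectors of a single seed form a $\mathbb{Z}$-basis of $\mathbb{Z}^n$ (Theorem \ref{unim}) and that the $G$-fan $\mathcal{F}_{G^{t_0}}$ is a genuine fan (Theorem \ref{Gfan}), so that distinct seeds give $G$-cones meeting only along proper faces. For part (1), I would argue as follows. Suppose $\sigma(\mathbf{x}_{t'}) = \mathbf{x}_t$. By Theorem \ref{sss}, applied with an arbitrarily chosen base vertex $t_0$, the equality of (unordered) clusters forces the equality of the two sets of $g$-vectors, and moreover the matching $x_{\sigma(i),t'} = x_{i,t}$ forces $g_{\sigma(i),t'}^{t_0} = g_{i,t}^{t_0}$ for all $i$. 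Since the $g$-vectors of a seed are linearly independent (Theorem \ref{unim}), the map $i \mapsto \sigma(i)$ is exactly the permutation identifying the two labelings, so $G_t^{t_0} = \sigma(G_{t'}^{t_0})$ in the sense of permuting columns. It then remains to propagate this equality to $B$ and $\mathbf{y}$: the equality $G_t B_t = B_{t_0} C_t$ of Proposition \ref{fdual} combined with the companion duality $(G_t^{t_0})^T \tilde{C}_t^{t_0} = I_n$ of Proposition \ref{sdual} recovers $C_t^{t_0}$ from $G_t^{t_0}$ (invert the transpose), and then $B_t = G_t^{-1} B_{t_0} C_t$ recovers $B_t$ up to the same permutation; finally $\mathbf{y}_t$ is determined by $C_t$ via the universal-coefficient formula of Theorem \ref{sqy} (or directly by the separation-of-additions formula), giving $\mathbf{y}_t = \sigma(\mathbf{y}_{t'})$. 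Hence $(\mathbf{x}_t,\mathbf{y}_t,B_t) = \sigma(\mathbf{x}_{t'},\mathbf{y}_{t'},B_{t'})$.

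For part (2), the ``only if'' direction is immediate from the mutation rule, since $\mu_k$ changes exactly one cluster variable. For the ``if'' direction, suppose $\mathbf{x}_t$ and $\mathbf{x}_{t'}$ share exactly $n-1$ cluster variables. By Theorem \ref{sss} (equivalently Corollary \ref{ssss}, the isomorphism of the cluster complex with the $G$-fan), the corresponding $G$-cones $\sigma(G_t^{t_0})$ and $\sigma(G_{t'}^{t_0})$ share exactly $n-1$ extremal rays, hence a common face $\tau$ of dimension $n-1$. Because $\mathcal{F}_{G^{t_0}}$ is a fan (Theorem \ref{Gfan}), $\tau$ is a genuine codimension-one face of both maximal cones, and the two cones lie in the two half-spaces determined by the hyperplane spanned by $\tau$; in particular they are adjacent full-dimensional cones in $\mathcal{F}_{G^{t_0}}$. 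One now has to show that two $G$-cones adjacent across a wall actually come from seeds related by a single mutation: performing the mutation $\mu_k$ at $t$ in the direction ``across $\tau$'' (i.e. replacing the $g$-vector not lying on $\tau$) produces a seed $t''$ whose $G$-cone is the reflection of $\sigma(G_t^{t_0})$ across $\tau$, which must then coincide with $\sigma(G_{t'}^{t_0})$; by part (1) this gives that $t''$ and $t'$ define the same unlabeled seed, so $\mathbf{x}_t$ and $\mathbf{x}_{t'}$ are adjacent in the exchange graph.

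The main obstacle I anticipate is the last step of part (2): matching the combinatorial adjacency of $G$-cones in the fan with an actual mutation of seeds. Concretely, one must rule out the a priori possibility that the two seeds share $n-1$ cluster variables but that the single-mutation at the ``missing'' direction does not land on the other seed — this requires knowing that the $g$-vector fan around the wall $\tau$ has no ``room'' for a third maximal cone, i.e. that the two cones adjacent across $\tau$ exhaust a neighborhood of (the relative interior of) $\tau$. This is exactly where the fan structure of $\mathcal{F}_{G^{t_0}}$ (Theorem \ref{Gfan}) and the dual-mutation formulas of Proposition \ref{dualmut} do the real work, since the mutation formula $G_t^{t_1} = (J_k + [\varepsilon_k(G_t^{t_0})B_{t_0}]_+^{\cdot k})G_t^{t_0}$ shows that changing one $g$-vector reflects the cone across precisely the facet spanned by the others; combined with Theorem \ref{sss} one concludes there are exactly two cones on the two sides of $\tau$. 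The remaining bookkeeping — tracking the permutation $\sigma$ through the duality identities in part (1), and verifying the $\mathbf y$-coordinate via Theorem \ref{sqy} — is routine once the $g$-vector/cluster-variable dictionary is in place.
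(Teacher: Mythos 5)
In part (1) your chain breaks at the step ``the companion duality $(G_t^{t_0})^T\tilde{C}_t^{t_0}=I_n$ \dots recovers $C_t^{t_0}$ from $G_t^{t_0}$ (invert the transpose).'' Inverting the transpose of $G_t^{t_0}$ recovers $\tilde{C}_t^{t_0}$, the $C$-matrix of the pattern attached to $-B^T$, not the $C$-matrix $C_t^{t_0}$ of the pattern for $B$; these are different matrices in general, and it is the genuine $C_t^{t_0}$ that you need in the first duality $B_t=(G_t^{t_0})^{-1}B_{t_0}C_t^{t_0}$ of Proposition \ref{fdual}, and again in Theorem \ref{sqy} for the coefficients. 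The paper bridges exactly this point by a reduction you skip: take the base vertex to be $t'$ itself (permissible, since the {\bf Assumption} is imposed for an arbitrary initial vertex). Then $\sigma(\mathbf{x}_{t'})=\mathbf{x}_t$ and Theorem \ref{sss} force $G_t^{t'}$ to be a permutation matrix $P$, so Proposition \ref{sdual} gives $\tilde{C}_t^{t'}=P$; because every column of $\tilde{C}_t^{t'}$ is a unit vector, Proposition \ref{dualmut}(c) (columns equal to $\pm e_j$ transfer between $C$ and $\tilde{C}$) together with the {\bf Assumption} on column signs yields $C_t^{t'}=P$, and only then does $B_t=P^TB_{t'}P=\sigma(B_{t'})$ follow. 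With your arbitrary base vertex $t_0$ you only get $\tilde{C}_t^{t_0}=\tilde{C}_{t'}^{t_0}P_\sigma$, and there is no general mechanism in the paper's toolkit to convert that into $C_t^{t_0}=C_{t'}^{t_0}P_\sigma$. A second, smaller omission: Theorem \ref{sqy} expresses $Y_{j,t}$ in terms of the $c$-vectors, the $F$-polynomials \emph{and} $B_t$, so ``$\mathbf{y}_t$ is determined by $C_t$'' is not enough; in the paper's reduction the $F$-polynomials are all $1$ (the cluster coincides with the initial one up to permutation), which is what makes $Y_{j,t}=Y_{\sigma(j)}$ and hence $\mathbf{y}_t=\sigma(\mathbf{y}_{t'})$ come out.

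For part (2) the paper simply invokes part (1) (relying on the cited implication of Gekhtman--Shapiro--Vainshtein that the seed-determined-by-cluster property yields the adjacency criterion), whereas you give a self-contained fan argument: shared $n-1$ variables give a common codimension-one face of two $G$-cones, and the mutation formula of Proposition \ref{dualmut} reflects across that wall, so by the fan property (Theorem \ref{Gfan}) and Theorem \ref{sss} the mutated seed must be the other one. That route is sound in outline and arguably more informative, but the substance of the theorem lies in part (1), where the $C$ versus $\tilde{C}$ confusion above is a genuine gap that must be repaired along the lines of the paper's reduction to $t'=t_0$.
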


\begin{proof}
	(1)  Assume that  $\sigma(\mathbf x_{t'}) = \mathbf x_{t}$ for some $t, t'\in \mathbb{T}_n$.
	We may let $t'= t_0$ for simplicity.   
	Then by Theorem  \ref{sss},  we have that  there is a
	bijective  map
	\[ \begin{split}
		\{\text{$g$-vectors}\,\, g_{i,t}^{t_0},  \,\, i\in [1,n],\,\, t\in\mathbb{T}_n          \} & \longrightarrow   \{\text{cluster variables $x_{i,t} $  in $\mathcal{A} $ }, \,\, i\in [1,n],\,\, t\in\mathbb{T}_n   \}     \\
		g_{i,t}^{t_0}    &\mapsto x_{i,t}.
	\end{split}   \]
	Let us consider the cluster algebra $\bar{\mathcal{A}}(\bar{\mathbf x},\bar{\mathbf y}, B )$ with principal coefficients simultaneously.
	
	Since $\sigma(\mathbf x_{t_0}) = \mathbf x_{t}$, thus the $G$-matrix $G^{t_0}_t$ is a  permutation matrix $P$ (every column and every row of $P$ only have one element  $1$.)
	This implies that the $F$-polynomials $F_{i,t}^{t_0}$ are $1$, and the matrix $\tilde{C}_{t}^{t_0}$ is also $P$, as $P^TP = I_n$ and $(G^{t_0}_t)^T\tilde{C}_{t}^{t_0 }= I_n$.  By the Proposition \ref{dualmut}, we know that ${C}_{t}^{t_0 } = \tilde{C}_{t}^{t_0 } =P$. 
	
	By the Proposition \ref{fdual}, we have that \[ B_t = (G^{t_0}_t)^{-1}B_{t_0} {C}_{t}^{t_0 } = P^TB_{t_0} P = \sigma(B_{t_0}).\]
	
	By the Theorem \ref{sqy}, we have that $Y_{j,t} = Y_{\sigma(j)}$ for each $j\in[1,n]$, and thus $y_{j,t}  =  y_{\sigma(j)}$ for each $j\in[1,n]$, i.e., $\mathbf y_t = \sigma(\mathbf y_{t_0}).$
	
	(2) It is a Corollary  of (1). 
\end{proof}

Next, we abandon the {\bf Assumption}, and prove that Theorem \ref{aban} holds for all totally sign-skew-symmetric cluster algebras of odd rank.

\begin{Theorem}
	Let $\mathcal{A}$ be a totally sign-skew-symmetric cluster algebra of odd rank with coefficients coming from an semifield $\mathbb{P}$ and with indecomposable exchange matrix. Then we have that:
	\begin{enumerate}
		\item Every seed is uniquely defined by its cluster, i.e., if there is a permutation $\sigma\in S_n$ such that $\sigma(\mathbf x_{t'}) = \mathbf x_{t}$ for some $t, t'\in \mathbb{T}_n$, then \[    (\mathbf x_t, \mathbf y_t, B_t) =  \sigma(\mathbf x_{t'}, \mathbf y_{t'}, B_{t'}).                 \]
		\item  Two clusters are adjacent in the exchange graph if and only if they have exactly $n- 1$ common cluster variables.
	\end{enumerate}
\end{Theorem}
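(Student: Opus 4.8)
The plan is to mirror the proof of Theorem~\ref{aban} and to pinpoint the single place where the \textbf{Assumption} entered, then replace it by an argument valid when $n$ is odd and $B$ is indecomposable. Recall the structure of that proof: after reducing to $t'=t_0$, the equality $\sigma(\mathbf x_{t_0})=\mathbf x_t$ gave (via Theorem~\ref{sss}) that $G^{t_0}_t$ is the permutation matrix $P:=P_\sigma$ and that every $F$-polynomial $F_{i,t}$ equals $1$; then Proposition~\ref{sdual} forced $\tilde C^{t_0}_t=P$, hence $C^{t_0}_t=\tilde C^{t_0}_t=P$ by Proposition~\ref{dualmut}; and finally $B_t=\sigma(B_{t_0})$ and $\mathbf y_t=\sigma(\mathbf y_{t_0})$ followed from Proposition~\ref{fdual} and Theorem~\ref{sqy}. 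The \textbf{Assumption} was used only to obtain $C^{t_0}_t=P$ from $G^{t_0}_t=P$.

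First I would recover the input ``$G^{t_0}_t=P_\sigma$ and $F_{i,t}=1$'' without the \textbf{Assumption}. Equality of cluster variables is still detected by $g$-vectors in the totally sign-skew-symmetric setting: by the Laurent phenomenon and the separation formula, $x_{i,t}$ equals $x^{g^{t_0}_{i,t}}$ times $F_{i,t}(\hat{\mathbf y})/F_{i,t}|_{\mathbb P}(\mathbf y)$, and since positivity holds for all totally sign-skew-symmetric cluster algebras and every $F$-polynomial has constant term $1$ (both \cite{LP}), the identity $x_{i,t}=x_{\sigma^{-1}(i),t_0}$ forces $F_{i,t}=1$ and $g^{t_0}_{i,t}=e_{\sigma^{-1}(i)}$. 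In particular the principal-coefficient clusters satisfy $\sigma(\bar{\mathbf x}_{t_0})=\bar{\mathbf x}_t$, so by the unconditional $g$-vector/cluster-variable bijection for principal coefficients of Li and Pan \cite{LP} we get $G^{t_0}_t=P_\sigma$.

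The decisive new step is to deduce $C^{t_0}_t=P_\sigma$ from oddness of $n$ and indecomposability of $B$. Here I have only $|C^{t_0}_t|=|G^{t_0}_t|=\operatorname{sign}(\sigma)\in\{1,-1\}$ with $C^{t_0}_t$ column sign-coherent and unimodular (Theorem~\ref{unim}, \cite{LP}), together with the first duality $P_\sigma B_t=B_{t_0}C^{t_0}_t$ of Proposition~\ref{fdual} and the fact that $B_t$ is sign-skew-symmetric. The plan is to show that these constraints, for $n$ odd and the diagram of $B_{t_0}$ connected, leave $C^{t_0}_t=P_\sigma$ as the only possibility --- intuitively, the column sign reversals in $C^{t_0}_t$ that the \textbf{Assumption} normally excludes would propagate through $P_\sigma B_t=B_{t_0}C^{t_0}_t$ along the connected diagram and violate a parity constraint tied to the odd rank. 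Once $C^{t_0}_t=P_\sigma$ is established, the remainder is as in Theorem~\ref{aban}: Proposition~\ref{fdual} gives $B_t=P_\sigma^{-1}B_{t_0}P_\sigma=\sigma(B_{t_0})$, and Theorem~\ref{sqy} with $F_{i,t}=1$ and $C^{t_0}_t=P_\sigma$ gives $Y_{j,t}=Y_{\sigma(j)}$ in the universal semifield, hence $\mathbf y_t=\sigma(\mathbf y_{t_0})$ for any $\mathbb P$; this proves (1), and (2) follows from (1) by the Gekhtman--Shapiro--Vainshtein reduction \cite{GSV}, exactly as Conjecture~\ref{common} is deduced from Conjecture~\ref{cldseed}. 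I expect this middle step --- extracting $C^{t_0}_t=P_\sigma$ from the single identity $P_\sigma B_t=B_{t_0}C^{t_0}_t$ in the absence of Proposition~\ref{sdual} --- to be the main obstacle, and the place where the hypotheses ``$n$ odd'' and ``$B$ indecomposable'' must be used in an essential way.
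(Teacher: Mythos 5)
Your reduction and end-game match the paper, but the decisive middle step is exactly the one you leave open, and the route you sketch for it is not the one that works. You propose to extract $C^{t_0}_t=P_\sigma$ from the first duality $P_\sigma B_t=B_{t_0}C^{t_0}_t$ together with column sign-coherence, unimodularity and $|C^{t_0}_t|=|G^{t_0}_t|$, hoping a parity/connectedness argument closes it. That linear-algebra data is too weak: $B_t$ is itself unknown, and $B_{t_0}$ may well be singular (every skew-symmetric matrix of odd size is), so the identity $B_t=B_{t_0}C^{t_0}_t$ cannot by itself pin down $C^{t_0}_t$; there is no visible "parity violation" to propagate. Since you explicitly defer this step as the main obstacle, the proof is incomplete precisely where the hypotheses must do their work.

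The paper closes this gap with a different mechanism. After getting $G^{t_0}_t=I_n$ (take $\sigma=\mathrm{id}$) and $\sigma(\bar{\mathbf x}_{t_0})=\bar{\mathbf x}_t$ in the principal-coefficient algebra, it compares, for each $k$, the exchange relation defining $\mu_k(x_{k,t})$ with the one defining $\mu_k(x_{k,t_0})$: writing $\mu_k(x_{k,t})$ as the ratio of two coefficient-monomial binomials times $\mu_k(x_{k,t_0})$ and invoking the Laurent phenomenon with positive coefficients in the cluster at $\mu_k(t_0)$, one sees that the two binomials must coincide up to swapping their terms, which yields the columnwise relations $\mathbf b_{k,t}=\varepsilon_k\mathbf b_{k,t_0}$ and $\mathbf c_{k,t}=\varepsilon_k\mathbf c_{k,t_0}$ with $\varepsilon_k\in\{1,-1\}$. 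Only then do the two extra hypotheses enter: indecomposability of the (sign-skew-symmetric) exchange matrix forces all $\varepsilon_k$ to be equal, so either $(B_t,C_t)=(B_{t_0},C_{t_0})$ or $(B_t,C_t)=(-B_{t_0},-C_{t_0})$; and odd rank kills the second option because $|{-C_{t_0}}|=(-1)^n=-1$ while $|C_t|=|G_t|=1$ by Theorem \ref{unim}. So the essential missing idea in your proposal is the exchange-relation/positivity comparison producing the columnwise sign relations; the first duality alone does not suffice. Your recovery of $G^{t_0}_t=P_\sigma$, $F_{i,t}=1$ via the Li--Pan results, and the final steps ($B_t=\sigma(B_{t_0})$ via Proposition \ref{fdual}, $\mathbf y_t=\sigma(\mathbf y_{t_0})$ via Theorem \ref{sqy}, and deducing (2) from (1)) agree with the paper.
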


\begin{proof}
	Let $t'= t_0$ for simplicity. Without loss of generality, we may assume that $\sigma = id$.
	
	We also consider the cluster algebra $\bar{\mathcal{A}}(\bar{\mathbf x},\bar{\mathbf y}, B )$ with principal coefficients simultaneously.  By Theorem \ref{sss}, we have that
	\[ \sigma(\mathbf x_{t_0}) = \mathbf x_{t} \implies  g_{i,t}^{t_0} = g_{i,t_0}^{t_0} ,\forall i \implies \sigma(\mathbf{\bar{x}}_{t_0}) = \mathbf{\bar{x}}_{t}\] 
	We also know that the $G$-matrix $G^{t_0}_t$ is the identity matrix $I_n$.   For any $k\in [1,n]$, we have that 
	\[\begin{split}  \mu_k(x_{k,t}) &= \frac{\prod y_i^{[c_{ik,t}]_+}  \prod x_i^{[b_{ik,t}]_+}   +   \prod y_i^{[-c_{ik,t}]_+}  \prod x_i^{[-b_{ik,t}]_+}  }{x_{k}} \\
		& =     \frac{\prod y_i^{[c_{ik,t}]_+}  \prod x_i^{[b_{ik,t}]_+}   +   \prod y_i^{[-c_{ik,t}]_+}  \prod x_i^{[-b_{ik,t_0}]_+}  }{     \prod y_i^{[c_{ik,t_0}]_+}  \prod x_i^{[b_{ik,t_0}]_+}   +   \prod y_i^{[-c_{ik,t_0}]_+}  \prod x_i^{[-b_{ik,t_0}]_+}     }\mu_k(x_{k,t_0})   .  \end{split}                \]
	Since $\mu_k(x_{k,t}) $  is a Laurent polynomial of $x_1, \dots, \mu_{k}(x_{1,t_0}), \dots, x_n$ with coefficients in $\mathbb{N}[y_1, y_2, \dots, y_n]$, thus we have that for each $k$, there is a number $\varepsilon_k \in \{1, -1\}$ such that 
	\[\mathbf b_{k,t} =\varepsilon_k \mathbf b_{k, t_0},\quad \mathbf c_{k,t} =\varepsilon_k \mathbf c_{k, t_0}.\]
	Since the exchange matrix is indecomposable and sign-skew-symmetric, we have that 
	$\varepsilon_1 = \dots =\varepsilon_n \in \{1,-1\}$. Thus we have that 
	either  \[B_t= -B_{t_0}, \quad C_t= -C_{t_0},\]
	or \[B_t= B_{t_0},\quad C_t= C_{t_0}.\]
	Since the cluster algebra is of odd rank, i.e., $n$ is odd. Then if $C_t= -C_{t_0}$, we have that $|C_t| = (-1)^n = -1$. Note that we have proved that $G_t= I_n$, thus $|G_t| = 1$. While  by Proposition \ref{unim}, we have that $|G_t| = |C_t|$.  Thus $|C_t| = -1$ is a contradiction. Thus we have that $B_t= B_{t_0}$, $C_t= C_{t_0} = I_n$.  
	
	By Theorem \ref{sqy} and the fact that $ \mathbf{\bar{x}}_{t_0} = \mathbf{\bar{x}}_{t}$, we know that $\mathbf y_t =\mathbf y_{t_0}$. This finishes the proof.
\end{proof}

\begin{Theorem}\label{suiyi}
	Let $\bar{\mathcal{A}}$ be the cluster algebra with principal coefficients such that the {\bf Assumption} holds for the exchange matrix. Then each cluster  $(\bar{\mathbf x}_t, \bar{\mathbf y}_t, B_t)$ is  uniquely determined by the corresponding $C$-matrix $C_t$.
\end{Theorem}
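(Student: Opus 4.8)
The plan is to recover the seed $(\bar{\mathbf x}_t, \bar{\mathbf y}_t, B_t)$ from $C_t$ in three steps, running through the $G$-matrix and the exchange matrix in turn. First, I would use the tropical duality established under the {\bf Assumption}. By Proposition \ref{sdual} we have $(G_t^{t_0})^T\tilde{C}_t^{t_0} = I_n$, and by Proposition \ref{dualmut}(a) the $G$-matrix for $B$ is the transpose of a $C$-matrix for $B^T$ (equivalently, $G_t^{t_0}$ and $\tilde C_t^{t_0}$ determine each other via inversion-transposition). The cleanest route is: $C_t$ determines $C_t^{t_0}$, hence $\tilde C_t^{t_0}$ (by Proposition \ref{dualmut}(c), which forces the $\pm e_j$-columns to match, combined with the {\bf Assumption} on column signs and the mutation recursions of Proposition \ref{dualmut}(b)), hence $G_t^{t_0} = ((\tilde C_t^{t_0})^{-1})^T$ is determined by $C_t$. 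Alternatively and more directly, one observes $G_t$ and $C_t$ are determined by the initial data together with the choice of $t$, and the point is to show $C_t$ alone suffices; the duality $(G_t^{t_0})^T\tilde C_t^{t_0}=I_n$ together with the {\bf Assumption} linking signs of $C$ and $\tilde C$ at every initial vertex is exactly what lets one reconstruct $G_t$ from $C_t$.

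Second, once $G_t = G_t^{t_0}$ is known, I would recover $B_t$ using the first duality, Proposition \ref{fdual}: $G_t B_t = B_{t_0} C_t$, and since $G_t$ is invertible in $M_n(\mathbb Z)$ by Theorem \ref{unim}, we get
\[
B_t = G_t^{-1} B_{t_0} C_t,
\]
which is manifestly a function of $C_t$ (with $B_{t_0}$ fixed).

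Third, with $C_t$ and $B_t = B_{t_0,t}$ in hand, I would recover $\bar{\mathbf y}_t$ and $\bar{\mathbf x}_t$. For $\bar{\mathbf y}_t$ in the principal-coefficient algebra this is immediate: $\bar y_{j,t} = \prod_i y_i^{c_{ij,t}}$ is literally read off from the columns of $C_t$. For $\bar{\mathbf x}_t$, each cluster variable $\bar x_{j,t}$ is determined by its $g$-vector $g_{j,t}$ (the $j$-th column of $G_t$, which we have reconstructed) together with its $F$-polynomial $F_{j,t}$; by the separation-of-additions formula of Fomin--Zelevinsky, $\bar x_{j,t}$ is expressed through $g_{j,t}$, $F_{j,t}$ and the initial seed, and $F_{j,t}$ in turn is governed by the same recursion as $x_{j,t}$ driven by the matrices $C_{t'}$ and $B_{t'}$ along a path from $t_0$ to $t$ — but the cleaner argument, mirroring \cite{CHL2}, is to invoke Theorem \ref{sss}: the $g$-vectors are in bijection with cluster variables, so knowing $G_t$ determines the unordered cluster $\bar{\mathbf x}_t$, and then the correct labelling is fixed because $g_{j,t}$ is the $j$-th column. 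Combining the three steps, every component of the seed is a function of $C_t$.

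The main obstacle is the first step: showing $C_t$ determines $G_t$. The column signs of $C_t$ and the $\pm e_j$-structure are handled by Proposition \ref{dualmut}(c),(d) and the {\bf Assumption}, but one must be careful that the reconstruction $\tilde C_t^{t_0} = (G_t^{t_0})^{-T}$ genuinely only uses $C_t$ and not extra knowledge of $t$; the honest way is to note that the bijection of Theorem \ref{sss} (valid under the {\bf Assumption}) already implies $G_t$ and $\bar{\mathbf x}_t$ determine each other, so it suffices to produce $G_t$ from $C_t$, and this is precisely what Propositions \ref{sdual} and \ref{dualmut} deliver. The remaining steps are then formal consequences of Propositions \ref{fdual}, \ref{unim} and Theorem \ref{sqy}.
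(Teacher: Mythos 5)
Your steps 2 and 3 are fine and essentially parallel to what the paper does once the $G$-matrix (equivalently the cluster) is pinned down: $B_t=G_t^{-1}B_{t_0}C_t$ from Proposition \ref{fdual} and Theorem \ref{unim}, $\bar{\mathbf y}_t$ read off the columns of $C_t$, and $\bar{\mathbf x}_t$ from Theorem \ref{sss}. The gap is exactly where you locate it yourself, in step 1, and the justification you offer there does not close it. Proposition \ref{sdual} expresses $G_t^{t_0}$ in terms of $\tilde C_t^{t_0}$ (the $C$-matrix of the pattern of $-B^T$), not in terms of $C_t^{t_0}$; Proposition \ref{dualmut}(a) expresses $G_t^{t_0}$ as the transpose of a $C$-matrix of the $B^T$-pattern based at $t$; and the \textbf{Assumption} together with Proposition \ref{dualmut}(b),(c) only matches the \emph{column signs} and the $\pm e_j$-columns of $C_t^{t_0}$ and $\tilde C_t^{t_0}$. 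For a genuinely non-skew-symmetrizable $B$ these two matrices differ entrywise, so knowing the signs and the coordinate-vector columns of $\tilde C_t^{t_0}$ does not determine $\tilde C_t^{t_0}$, hence does not determine $G_t^{t_0}=((\tilde C_t^{t_0})^{-1})^T$, from $C_t^{t_0}$. In short, none of the purely matrix-level identities you cite produces $G_t$ as a function of $C_t$.

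The paper closes this step by passing through cluster variables of the auxiliary pattern, and this is the idea missing from your proposal (you never invoke Theorem \ref{aban} at the crucial point). Concretely: suppose $C_t=C_{t'}$. Applying Proposition \ref{sdual} with initial matrix $\tilde B=-B^T$ (whose dual pattern is $B$ again) gives $(\tilde G_t)^T C_t=I_n$, so $\tilde G_t=\tilde G_{t'}$. Theorem \ref{sss}, applied to the cluster algebra of $\tilde B$, converts this into equality of the tilde clusters at $t$ and $t'$; Theorem \ref{aban} (seed determined by its cluster, valid under the \textbf{Assumption}) then forces the entire tilde seeds to coincide, and in particular $\tilde C_t=\tilde C_{t'}$. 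Only now does Proposition \ref{sdual} give $G_t=G_{t'}$, after which Theorem \ref{sss} and Theorem \ref{aban} (or your steps 2--3) recover $(\bar{\mathbf x}_t,\bar{\mathbf y}_t,B_t)=(\bar{\mathbf x}_{t'},\bar{\mathbf y}_{t'},B_{t'})$. So the correct route is an injectivity argument threaded through the $-B^T$ pattern via Theorems \ref{sss} and \ref{aban}, rather than a direct reconstruction of $G_t$ from $C_t$ by the dualities and the sign \textbf{Assumption} alone.
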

\begin{proof}
	If there are two vertices $t, t'$ on $\mathbb{T}_n$ such that  $C_t  = C_{t'}$. Let us show that  $(\bar{\mathbf x}_t, \bar{\mathbf y}_t, B_t) =(\bar{\mathbf x}_{t'}, \bar{\mathbf y}_{t'}, B_{t'})$.   Note that by Proposition \ref{sdual},  $C_t  = C_{t'}  \implies    \tilde{G}_t  = \tilde{G}_{t'} $. While by Theorem \ref{sss},   $ \tilde{G}_t  = \tilde{G}_{t'}  \implies \bar{\tilde{{\mathbf x}}}_{t'} = \bar{\tilde{{\mathbf x}}}_{t'}.$ By Theorem \ref{aban}, we have that $ \tilde{C}_t  = \tilde{C}_{t'}$. Thus $G_t  = G_{t'}$ and $\bar{{\mathbf x}}_{t'} = \bar{{\mathbf x}}_{t'}.$ By Theorem \ref{sss},  we have that $(\bar{\mathbf x}_t, \bar{\mathbf y}_t, B_t) =(\bar{\mathbf x}_{t'}, \bar{\mathbf y}_{t'}, B_{t'})$. 
\end{proof}

\begin{Corollary}
	If the totally sign-skew-symmetric matrix satisfies the {\bf Assumption}, then the $G$-matrix and  the $C$-matrix  are uniquely determined by each other, and they determine the seeds uniquely.
\end{Corollary}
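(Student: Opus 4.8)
The statement is essentially a repackaging of the results of Sections \ref{sec2} and \ref{pg}, so the plan is to assemble them rather than to prove anything new. Throughout I would work in the cluster algebra $\bar{\mathcal A}$ with principal coefficients at the initial vertex $t_0$ (where the $G$-matrices and $C$-matrices are defined), fix that $t_0$, and keep track of the \emph{labeled} data: the columns of $G_t^{t_0}$ and of $C_t^{t_0}$ are indexed by $[1,n]$, and I want bijections respecting these labels. Since $B$ satisfies the {\bf Assumption} by hypothesis, all of Proposition \ref{sdual}, Proposition \ref{dualmut}, Theorem \ref{sss}, Theorem \ref{aban} and Theorem \ref{suiyi} are available.

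First I would show that the labeled matrix $G_t^{t_0}$ determines the whole labeled seed $(\bar{\mathbf x}_t,\bar{\mathbf y}_t,B_t)$. By Theorem \ref{laurentphen} the $j$-th column $g_{j,t}^{t_0}$ is the $\mathbb{Z}^n$-degree of the cluster variable $x_{j,t}$, and by Theorem \ref{sss} the assignment $g_{j,t}^{t_0}\mapsto x_{j,t}$ is a bijection from all $g$-vectors onto all cluster variables; hence knowing $G_t^{t_0}$ recovers the labeled cluster $\bar{\mathbf x}_t=(x_{1,t},\dots,x_{n,t})$. By Theorem \ref{aban}(1) (applied with the trivial permutation) the labeled cluster determines the labeled seed, so $G_t^{t_0}$ determines $(\bar{\mathbf x}_t,\bar{\mathbf y}_t,B_t)$; in particular it determines $C_t^{t_0}$, since in $\mathrm{trop}(y_i,i\in[1,n])$ one has $y_{j,t}=\prod_i y_i^{c_{ij,t}}$, so $C_t^{t_0}$ is read off from $\bar{\mathbf y}_t$. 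Symmetrically, $C_t^{t_0}$ determines the labeled seed by Theorem \ref{suiyi}, and from $\bar{\mathbf x}_t$ one reads $G_t^{t_0}$ back off using the homogeneity grading of Theorem \ref{laurentphen}. This yields a well-defined, label-preserving bijection $G_t^{t_0}\leftrightarrow C_t^{t_0}$ between the set of $G$-matrices and the set of $C$-matrices of the pattern (distinct vertices with equal $G$-matrices being forced to have equal $C$-matrices, and conversely), which is the first assertion; and both $C_t^{t_0}$ and $G_t^{t_0}$ determine the seed, which is the second.

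I do not expect a genuine obstacle: the heavy lifting was done in Theorems \ref{sss}, \ref{aban}, \ref{suiyi} and Propositions \ref{sdual}, \ref{dualmut}. The only points needing care will be (i) that $G$- and $C$-matrices are intrinsically objects of the principal-coefficient algebra, so each invoked theorem must be used for $\bar{\mathcal A}$ (they are stated to allow this), and (ii) that one must track labels, since a bijection of \emph{unlabeled} clusters would only pin the seed down up to a permutation. As an alternative route for the $G$--$C$ correspondence that avoids cluster variables, one could apply Proposition \ref{sdual} to the pattern of $\tilde B=-B^T$ (whose own ``$\tilde{}$'' is $B$ again), obtaining $(\tilde G_t^{t_0})^T C_t^{t_0}=I_n$, hence $C_t^{t_0}=\bigl((\tilde G_t^{t_0})^{-1}\bigr)^T$, and then combine with Proposition \ref{dualmut}; but the argument above is shorter. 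Finally, there is no circularity in invoking Theorem \ref{suiyi} (which itself rests on Proposition \ref{sdual} and Theorems \ref{sss}, \ref{aban}), as all of these were already proved under the {\bf Assumption}, which is in force.
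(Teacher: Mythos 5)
Your proposal is correct and matches the paper's intent: the Corollary is stated without proof precisely because it is the assembly you describe, with $G_t^{t_0}$ determining the seed via Theorem \ref{sss} and Theorem \ref{aban}, $C_t^{t_0}$ determining the seed via Theorem \ref{suiyi}, and the seed returning both matrices through the principal-coefficient grading and the tropical coefficients. No gaps; the labeled-versus-unlabeled caution you note is the only point of care and you handle it.
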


As a corollary, we obtain the following results.

\begin{Theorem}
	\begin{enumerate}
		\item[(1)]	Let $\mathcal{A}$ be an acyclic sign-skew-symmetric cluster algebra with coefficients coming from an semifield $\mathbb{P}$.  Then
	\begin{enumerate}
	\item Every seed is uniquely defined by its cluster, i.e., if there is a permutation $\sigma\in S_n$ such that $\sigma(\mathbf x_{t'}) = \mathbf x_{t}$ for some $t, t'\in \mathbb{T}_n$, then \[    (\mathbf x_t, \mathbf y_t, B_t) =  \sigma(\mathbf x_{t'}, \mathbf y_{t'}, B_{t'}).                 \]
	\item  Two clusters are adjacent in the exchange graph if and only if they have exactly $n- 1$ common cluster variables.
\end{enumerate}
		
		\item[(2)] Let $\bar{\mathcal{A}}$ be an acyclic sign-skew-symmetric cluster algebras with principal coefficients. Then  each cluster  $(\bar{\mathbf x}_t, \bar{\mathbf y}_t, B_t)$ is  uniquely determined by the corresponding $C$-matrix $C_t$.
	\end{enumerate}
\end{Theorem}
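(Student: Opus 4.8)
The plan is to deduce this theorem directly from Theorem \ref{aban} and Theorem \ref{suiyi}, so the only thing that must be checked is that the {\bf Assumption} is automatically satisfied when the exchange matrix $B$ is acyclic sign-skew-symmetric. First I would recall that, by the theorem of Huang and Li, an acyclic sign-skew-symmetric matrix is totally sign-skew-symmetric, so that the matrix pattern $(B_t, C_t, G_t)_{t\in\mathbb{T}_n}$, the cluster algebra $\mathcal{A}(B)$, and the associated $F$-polynomials, $C$-matrices and $G$-matrices are all well-defined; in particular the patterns attached to $B$, to $-B$, to $B^{T}$ and to $-B^{T}$ all make sense.

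Next I would verify the {\bf Assumption} for $B$. Each of $B$, $-B$, $B^{T}$ and $-B^{T}$ is again acyclic, since reversing every arrow of the directed graph of an acyclic matrix, or transposing the matrix, again produces an acyclic directed graph, and each of these matrices is (trivially) mutation equivalent to an acyclic matrix, namely itself. Hence Corollary \ref{printotal} applies to each of them, and to every matrix in their mutation classes, so that $\mu_{k_s}\cdots\mu_{k_1}\begin{pmatrix} B'&-I_n\\ I_n&0\end{pmatrix}$ is sign-skew-symmetric for every finite mutation sequence $k_1,\dots,k_s$ whenever $B'$ is mutation equivalent to one of them. This is exactly the content of Conjecture \ref{conl} for the matrices involved, and, as observed in Section \ref{sec32}, Conjecture \ref{conl} implies the {\bf Assumption}; therefore the {\bf Assumption} holds for $B$.

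Once the {\bf Assumption} is in place, part (1) follows at once from Theorem \ref{aban} applied to $\mathcal{A}$, and part (2) follows at once from Theorem \ref{suiyi} applied to $\bar{\mathcal{A}}$. The only point requiring attention --- and it is bookkeeping rather than a genuine obstacle --- is that the {\bf Assumption} is a joint statement comparing, for all choices of initial vertex, the $C$-matrices of the patterns attached to $B$ and $-B$ and to the associated matrices $-B^{T}$ and $B^{T}$; one must therefore check that acyclicity propagates through sign change, through transposition, and (trivially) through mutation equivalence to an acyclic matrix. Since all of these operations preserve acyclicity, Corollary \ref{printotal} covers every matrix entering the {\bf Assumption}, and the theorem follows.
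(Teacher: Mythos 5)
Your proposal is correct and follows essentially the same route the paper takes: the paper's (implicit) proof is precisely to note that Conjecture \ref{conl} holds in the acyclic case via Corollary \ref{printotal}, hence the \textbf{Assumption} holds, and then to invoke Theorem \ref{aban} for part (1) and Theorem \ref{suiyi} for part (2). Your added bookkeeping that acyclicity is preserved under negation, transposition, and passing to the mutation class is a harmless (and welcome) elaboration of what the paper leaves unstated.
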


{\bf Acknowledgements:}\; This work was supported by the National Natural Science Foundation of China (Grants No.12071422).

\bibliographystyle{alpha}
\bibliography{acyclusteralg.bib}

\end{document}